\newcommand\R{\mathbb{R}}
\newcommand\Rn{\mathbb{R}^d}
\newcommand\N{\mathbb{N}}
\newcommand{\BV}{\mathrm{BV}}
\newcommand{\SBV}{\mathrm{SBV}}
\newcommand{\SBVG}{\mathrm{SBV}_{\mathcal{G}}}
\newcommand{\ebeta}{{[\beta]}}
\newcommand{\dX}{\mathrm{d}_X}
\newcommand{\dG}{\mathrm{d}_{\G}}
\newcommand\calH{\mathcal{H}}
\newcommand\calL{\mathcal{L}}
\newcommand\calU{\mathcal{U}}
\newcommand\tv{\widetilde{v}}
\newcommand\toG{\xrightarrow{{L}^1_\G}}
\newcommand\LG{{L}^1_\G}
\newcommand\dist{{\mathrm{dist}}}
\newcommand\Id{{\mathrm{Id}}}
\newcommand\SO{{\mathrm{SO}}}
\newcommand\sym{{\mathrm{sym}}}
\newcommand\loc{{\mathrm{loc}}}
\newcommand\out{{\mathrm{out}}}
\newcommand\iin{{\mathrm{in}}}
\newcommand\eps{{\varepsilon}}
\newcommand\di{{\,\mathrm{d}}}
\newcommand\dix{{\,\mathrm{d}_X}}
\newcommand\dt{{\,\mathrm{d}t}}
\newcommand{\Per}{\mathrm{Per}}
\newcommand{\LM}[1]{\hbox{\vrule width.2pt \vbox to#1pt{\vfill \hrule width#1pt height.2pt}}}
\newcommand{\LL}{{\mathchoice
{\,\LM7\,}{\,\LM7\,}{\,\LM5\,}{\,\LM{3.35}\,}}}
\newtheorem{definition}{Definition}[section]
\newtheorem{lemma}[definition]{Lemma}
\newtheorem{theorem}[definition]{Theorem}
\newtheorem{proposition}[definition]{Proposition}
\theoremstyle{remark}
\newtheorem{remark}[definition]{Remark}
\def\e{\varepsilon}
\definecolor{ddorange}{rgb}{1,0.5,0}
\definecolor{ddcyan}{rgb}{0,0.2,1.0}
\newcommand{\EEE}{\color{black}}
\newcommand\dx{{\,\mathrm{d}x}}
\newcommand\dy{{\,\mathrm{d}y}}
\newcommand\ds{{\,\mathrm{d}s}}
\newcommand{\ol}{\overline}
\newcommand{\sm}{\setminus}
\newcommand{\F}{\mathcal{F}}
\newcommand{\G}{\mathcal{G}}
\newcommand{\cH}{\mathcal{H}}
\newcommand{\FT}{\mathrm{FT}}
\newcommand{\Fr}{\F^{\mathrm{red}}}
\newcommand{\tF}{\widetilde{\F}}
\newcommand{\SOn}{\mathrm{SO}(d)}
\newcommand{\On}{\mathrm{O}(d)}
\newcommand{\OnG}{{\On}/{\G}}
\newcommand{\Mno}{\mathrm{M}^{d\times d}}
\newcommand{\Mdue}{\mathrm{M}^{2\times 2}}
\newcommand{\Mns}{\mathrm{M}^{d\times d}_{\mathrm{sym}}}
\newcommand{\Mnsp}{\mathrm{M}^{d\times d}_{\mathrm{sym},+}}
\newcommand{\PC}{\mathrm{SBV}_0}
\newcommand{\xy}{^\xi_y}
\newcommand{\LN}{\calL^N}
\newcommand{\hN}{\calH^{N-1}}
\newcommand{\SN}{\mathbb{S}^{N-1}}
\newcommand{\Lip}{\mathrm{Lip}}
\newcommand{\dhN}{{\,\mathrm{d}\calH^{N-1}}}
\newcommand\uu{{u}}
\newcommand\AT{\text{PF}}
\numberwithin{equation}{section}
\title[Approximation of sharp-interface energies accounting for lattice symmetry]{Phase-field approximation of sharp-interface energies accounting for lattice symmetry}
\author{Sergio Conti, Vito Crismale, Adriana Garroni, Annalisa Malusa}
\address{Sergio Conti: Institut f\"ur Angewandte Mathematik, Universit\"at Bonn, 53115 Bonn, Germany \\
Vito Crismale, Adriana Garroni, Annalisa Malusa: Dipartimento di Matematica, Università La Sapienza di Roma, 
piazzale Aldo Moro 5, 00185 Roma}
\date{\today}
\begin{document}
\maketitle
\begin{abstract}
	We present a phase-field approximation of sharp-interface  energies defined on partitions, designed for modeling grain boundaries in polycrystals. The independent variable takes values in
the orthogonal group $\On$ modulo a lattice point group $\G$, reflecting the crystallographic symmetries of the underlying lattice. In the sharp-interface limit, the surface energy
exhibits a Read-Shockley-type behavior for small misorientation angles, scaling as $\theta|\log\theta|$.
The regularized functionals are applicable to grain growth simulation and the reconstruction of grain boundaries from imaging data.
\end{abstract}
%\keywords{ }
%%\textsc{MSC2010:}
%\subjclass[2010]{49Q20, 49J45, 26A45, 74R10, 74G65, 70G75.}
\tableofcontents

\section{Introduction}\label{sec:intro}

We investigate sharp-interface variational models for polycrystalline materials that fully respect the symmetry of the underlying lattice.
The energy is defined on partitions of a domain, each subset -- representing an individual grain -- being endowed with  an orientation modeled as an element of the quotient space $\OnG$. Here, $\G\subset\On$ is a finite point group describing the symmetries of a reference lattice, and the local orientation is measured relative to this reference.
The total energy is given by an integral over the interfaces separating adjacent grains.
The
integrand, or surface energy density,  depends on the orientations of the two neighboring grains and on the unit normal to the interface.
To ensure that the class of piecewise-constant orientation fields is closed, we assume the
energy density grows superlinearly as the misorientation angle tends to zero.

A distinctive feature of the model is that the orientation variable takes values in a manifold rather than a linear space. Classical analytical tools such as regularization, approximation, and interpolation are therefore not directly applicable, and a dedicated framework for manifold-valued fields is required.

Models of this type are widely used to study
polycrystalline  structures in metals and alloys, see, for instance,
\cite{CahnTaylor2004,BerkelsRaetzRumpfVoigt2008,ElsWir,RunnelsBeyerleinContiOrtiz2016a} and references therein.
Dependence on both orientations and lattice symmetry is likewise central to related problems such as polygonal partitions
with prescribed area and minimal perimeter \cite{DelNinThesis}, and optimal  quantization or location of measures \cite{BouPeRo,CaGoIa}.
The point-group symmetry also enters variational models of
fractional defects, see for example \cite{BadCicDeLPon18,BadCic,GolMerMil}.
Analytical aspects such as semicontinuity and relaxation -- though without an explicit treatment of the symmetry group $\G$ -- have been addressed in
\cite{ambrosio1990functionals,morgan1997lowersemicontinuity,Caraballo2008}.

The principal objective of this paper is to develop a diffuse-interface, or phase-field,  approximation of the sharp-interface model described above. Specifically,  we construct a
 phase-field approximation for surface energies with densities that vanish as the misorientation between grains tends to zero, but do so in a superlinear manner. Our approach builds on the model introduced for cohesive fracture in \cite{CFI14,ConFocIur25}.
The analysis relies on
two key abstract ingredients.
First,  compactness properties
are formulated within the framework of
metric-space-valued generalizations of Sobolev spaces and spaces of functions of bounded variation, as developed in \cite{Resh,Amb90SBVX}. Second, the construction of recovery sequences is based on a lifting theorem for $\BV$ functions taking values in  compact manifolds, established in  \cite{CanOrl20JFA}
(here we provide a simpler proof valid for our specific case, see Theorem~\ref{theoliftingmanifold}). These tools allow us to rigorously prove
the convergence of the phase-field approximation to the sharp-interface limit.

To describe the sharp-interface model in more detail, let  $\{E_i\}_i$ be
a partition
of a domain $\Omega\subset\R^N$, representing the material body.
Each set $E_i$
corresponds to an individual grain with assigned orientation $R_i\in\On$,
yielding a piecewise-constant function $\beta=\sum_i\chi_{E_i}R_i$  as the order parameter.
The grain boundaries  are contained in the discontinuity set $J_\beta$ of
 $\beta$ and the corresponding energy is given by
\begin{equation}\label{eq-sharp}
	\int_{J_\beta} \Psi(R^-, R^+,\nu_\beta)\,  \di  \mathcal{H}^{N-1}= \sum_{i\neq j} \mathcal{H}^{N-1}(\partial^* E_i\cap\partial^* E_j)\Psi (R_i,R_j,\nu_{ij}),
\end{equation}
where $\Psi:\On\times\On\times \SN  \to[0,\infty)$ is the surface energy density, $\nu_\beta$ denotes
the measure-theoretic normal to $J_\beta$, and
$\nu_{ij}$ is the normal to the intersection of the reduced boundaries
$\partial^* E_i\cap\partial^* E_j$.
A natural function space for such
energies is the space of  Special functions with Bounded Variation with values in $\On$
and vanishing absolutely continuous part of the gradient,
 \begin{equation}\label{def:SBV0}
	\PC(\Omega; \On):=\{\beta \in \SBV(\Omega; \On)\colon \nabla \beta=0\}.
\end{equation}
For the variational problems associated with \eqref{eq-sharp}  to be  well-posed in this space, certain conditions on $\Psi$ are required. In particular,
lower semicontinuity of the energy follows from
the $\BV$-ellipticity of $\Psi$ (see \cite[Theorem~5.14]{AmbFusPal00}). Additionally, it is essential that $\PC(\Omega; \On)$
is sequentially closed under the relevant convergence. This requires that sequences with bounded energy cannot converge to fields with diffuse gradient. A sufficient condition is that
$\Psi$ has infinite slope at zero,
$$
\lim_{|R^+- R^-|\to 0}\frac{\Psi(R^+, R^-,\nu)}{|R^+- R^-|}= +\infty
$$
uniformly in $\nu$,
see
\cite[Theorem~4.7]{AmbFusPal00}.
 Denoting by $\theta=|R^+- R^-|$ the relative mismatch between neighboring grain orientations, the specific growth law $\theta |\log\theta|$ of $\Psi$ for small $\theta$ was first identified
 by Read and Shockley
  in the '50s \cite{RS}.
In two dimensions, this scaling has been recently rigorously derived  as the limit of a model that accounts for the elastic distortion induced by incompatibilities and crystal defects at the boundary, see \cite{LauLuc,ForGarSpa24} and
\cite{fanzon2019derivation}.

Since $\beta$  is interpreted as a field taking values in $\On$, invariance of $\Psi$ under the action of $\G$ must be enforced. In particular,
boundaries
between regions whose orientations agree up to multiplication by  an element of $\G$
are spurious, as they
represent the same physical state, and do not contribute to the energy. Accordingly, we require
$\Psi(R, GR,\nu)=0$ for all $R\in\On$, $G\in \G$, $\nu \in \SN$, and, more generally,
\begin{equation}\label{eqintroPsiinvariance}
\Psi(GR, G'R',\nu)=\Psi(R,R',\nu) \hskip5mm\text{ for all
$R$, $R'\in\On$,
$G$, $G'\in \G$, $\nu \in \SN$. }
\end{equation}
It is important to note that, even in simple examples, this invariance cannot be removed by prescribing a consistent choice of the representative for each orientation class. See
Figure~\ref{fig-intro} for an illustration.

\begin{figure}
 \begin{center}
  \includegraphics[width=12cm]{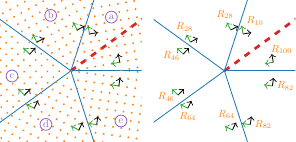}
 \end{center}
\caption{Sketch of a possible configuration. Left: the dots represent the local lattice orientation in a polycrystal with five grains, labeled $(a)$, $(b)$, $(c)$, $(d)$, $(e)$. Around each interface, a possible orientation of the two basis vectors is illustrated; each interface has a change of orientation of 18 degrees. However, these changes accumulate, and if one starts with a given orientation (interface $(a)-(b)$)
then the orientation after five interfaces differs from the initial one by 90 degrees, leading to a fictitious interface, for istance across the dashed red line in grain $(a)$. On the right, the explicit rotation matrices are indicated (angles in degrees).
}
\label{fig-intro}
\end{figure}

The goal of this paper is to construct variational phase-field models which converge to sharp-interface energies of the form \eqref{eq-sharp}. Our approximation respects the invariance
under the lattice point-group $\G$, and reproduces the Read-Shockley superlinear scaling $\theta|\log\theta|$ for small misorientation angles. The approach is motivated and inspired by the algorithms used in  \cite{BerkelsRaetzRumpfVoigt2008,ElsWir} for the segmentation of polycrystalline images.

Phase-field methods are widely employed in numerical simulations of
sharp-interface energies, and, more in general, of
free-discontinuity problems. By replacing discontinuity sets with narrow diffuse layers, they avoid the need to track interfaces explicitly and permit to use standard finite-element discretizations designed for  Sobolev functions. Classical applications include image reconstruction \cite{chambolle1995image,bourdin2000numerical,BFM} and fracture mechanics \cite{FraMar, DelPiero99, CarAmbAleDeL20, FreIur,WuNguyenNguyenChiSutula2020phase,LammenContiMosler2023}.

This strategy originated with the Ambrosio-Tortorelli approximation of the Mumford-Shah functional \cite{AmbTor90}, in which the surface energy density depends solely on the measure of the jump set, and not on the jump amplitude.
More recently, a number of phase-field approximations for cohesive fracture and segmentation energies -- where the surface energy may depend on the opening -- have been rigorously developed, see for instance
\cite{DMIur13, CFI14, CicFocZep21, CFI22, ConFocIur25}.

We approximate sharp-interface energies with superlinear
density $\Psi$ following the approach introduced in \cite{CFI14, ConFocIur25}, with the key additional challenge of incorporating the group invariance, which we describe in more detail below.
Two further technical distinctions are worth noting: first, to recover the Read-Shockley scaling law, we adapt the construction from \cite[Section~7]{CFI14}, \cite{ConFocIur25}, originally designed for general superlinear power-law growth at small openings. Second, we introduce a penalization term -- similar to that in
 \cite{CicFocZep21} for
the approximation of piecewise rigid motions with brittle interface energy -- to ensure that the diffuse gradient vanishes in the limit.

\subsection{Formulation of the main results}\label{sec:mainresults}
Let $\Omega\subset \R^N$, $N\geq 1$,   be a bounded connected set with Lipschitz boundary, and let $\G\subset\On$, for $d\geq2$, be a finite group, possibly with $d\ne N$.
 We define the space of admissible order parameters that encode the lattice symmetry as
 \begin{equation}\label{SBVG}
	\SBVG(\Omega;\Mno):=\{ \beta \in \mathrm{SBV}^2(\Omega; \Mno) \colon \beta^+(x) \in \G \, \beta^-(x) \, \text{ for }\hN\text{-a.e. } x \in J_\beta\},
\end{equation}
i.e., the set of all $\SBV$ 
functions such that $\calH^{N-1}(J_\beta)<\infty$, $\nabla\beta\in L^2$, and the
traces on the jump set differ only by multiplication by an element of $\G$ (we will sometimes say simply  ``jump in $\G$'' for brevity).

For $\e>0$ and an open set $A\subset\Omega$  we consider the energy
\begin{equation}\label{2611231254}
	\Fr_\varepsilon(\beta,v;A):= 
	\begin{cases}
		\displaystyle
		\int_{{A} } f_\varepsilon^2(v) \,|\nabla \beta|^2 \dx + \AT_\e(v;A)  &  \text{in}\ \SBVG(\Omega;\Mno) \times  H^1(\Omega;[0,1]), \\
		+\infty & \text{otherwise in}\  L^1(\Omega; {\Mno})\times L^1(\Omega),
	\end{cases}
\end{equation}
where $\AT_\e$ is a
Modica-Mortola type energy and is given by
\begin{equation}\label{AT}
	\AT_\e(v;A):= \int_{A} \left(\frac{(1-v)^2}{4\varepsilon}+\varepsilon |\nabla v|^2 \right) \dx,  \qquad v\in H^1(\Omega;[0,1]).
\end{equation}
We notice that $\Fr_\varepsilon$ is finite on the space $\SBVG$, and jumps in $\G$ of the order parameter $\beta$ do not contribute to the energy.
As in the models proposed in \cite{CFI14, ConFocIur25}, the choice of the prefactor $f_\varepsilon$ in front of $|\nabla\beta|^2$, the so-called damage coefficient, is crucial to capture the desired behavior  of the limiting surface energy density. At variance with \cite{CFI14, ConFocIur25}, and following \cite{CicFocZep21}, we allow $f_\varepsilon$ to take large values, enforcing that the limit fields have no diffuse gradient.
More precisely, we set
\begin{equation}\label{0105231047}
	f_\varepsilon(s):= M_\varepsilon \wedge \sqrt{\varepsilon} f(s) \text{ for }  s \in [0,1[, \quad f_\varepsilon(1)=M_\varepsilon, \qquad \lim_{\eps \to 0^+}M_\eps=+\infty,
\end{equation}
where  $f \in C([0,1[; [0,+\infty[)$ is a nondecreasing function satisfying  $f^{-1}(0)=\{0\}$, and
\begin{equation}\label{0405230818}
	\ell:=\lim_{s\to 1^-} \frac{1-s}{|\log(1-s)|}f(s)>0, 
\end{equation}
as for example
\begin{equation*}
	 f(s)=\ell \frac{|\log(1-s)|}{1-s}.
\end{equation*}
The singularity of $f$  at $1$ induces the logarithmic superlinear behaviour of the surface tension
at small openings (cf. Proposition \ref{prop:proprietag}), while the truncation by the diverging sequence $M_\e$  ensures the condition $\nabla \beta=0$ in the domain of the $\Gamma$-limit (see Theorem~\ref{t:comp1} and Theorem~\ref{t:comp2}).

Finally, we introduce a penalization term that asymptotically
 forces the limit fields  to take values  in
$\On$. To this end, we consider an additional positive parameter $\delta_\e\to 0^+$ and define
the modified energy
\begin{equation}\label{2611231323}
	\F_\varepsilon(\beta,v;A):=
	\Fr_\varepsilon(\beta,v;A) + \frac{1}{\delta_\varepsilon} \int_A \dist^2(\beta, \On) \dx.
\end{equation}
We shall show that
limiting behavior of the energies $\F_\varepsilon$ depends only on the asymptotic ratio
\begin{equation}\label{1002241910}
	\lambda:=\lim_{\varepsilon\to 0}\frac{\varepsilon}{\delta_\varepsilon}\in [0,+\infty],
\end{equation}
that we assume to exist.
In view of the invariance of the energy $\F_\e$ under the action of the symmetry group $\G$,  it is natural to adopt the following notion of convergence modulo $\G$.
A family $(\beta_\varepsilon)_\varepsilon\subset L^1(\Omega; \Mno)$ is said to converge to $\beta$ in the $\LG$-sense if
 there exists a sequence $G_\varepsilon\in L^\infty(\Omega; \G)$ such that
\begin{equation}\label{0602242311}
	G_\varepsilon \beta_\varepsilon \to \beta \quad \text{in }L^1(\Omega; \Mno).
\end{equation}
With respect to this notion of convergence,  we prove a compactness result for sequences with equi-bounded energy and  characterize the $\Gamma$-limit of $\F_\e$. More precisely,
for any $\lambda \in [0,+\infty]$ we define the sharp-interface functional
\begin{equation}\label{f:fzero}
	\F^\lambda (\beta,v):=
	\begin{cases}\displaystyle 
		\int_{J_\beta} g_\lambda( \beta^-,\beta^+)\, \dhN & \text{if } \beta\in \SBV_0(\Omega;\On) ,\ v=1,\\
		+\infty  & \text{otherwise in}\   L^1(\Omega; {\Mno})\times L^1(\Omega),
	\end{cases}
\end{equation}
where the surface energy density $g_\lambda$ is determined via an asymptotic cell problem that will be introduced and analyzed in detail in Section \ref{sec:surfen}.

Due to the invariance of the energy $\F_\e$ under the action of $\G$, the limit density $g_\lambda$ inherits the same invariance property. That is, for every $R^+,R^-\in \On$ we have
\begin{equation}
	\label{eq-glambda-invariance}
	g_\lambda(R^+,R^-)=g_\lambda(GR^+,G'R^-)\qquad \forall \ G,G'\in \G,
\end{equation}
(see Proposition \ref{p:bpg}).
In particular, the functional $\F^\lambda$ depends only on the equivalence classes of $\beta$ in the quotient space  $\OnG$, namely,
\begin{equation}\label{eq-limit-classi}
	\F^\lambda(\beta,v)=\F^\lambda(G\beta,v) \quad\text{for every }  (\beta,v,G) \in L^1(\Omega; {\Mno})\times L^1(\Omega)\times  \BV(\Omega;\G).
\end{equation}

The main result of this paper is the following.
\begin{theorem}\label{thm:main}
	The following properties hold:
	\begin{itemize}
		\item[(C)] 
		For any $(\beta_\varepsilon, v_\varepsilon)_\varepsilon \subset L^1(\Omega;\Mno \times[0,1])$ satisfying 
		\begin{equation}\label{compest}
			\limsup_{\varepsilon\to 0} \F_\varepsilon(\beta_\varepsilon, v_\varepsilon)<+\infty
		\end{equation}
		there exists $\beta \in \PC(\Omega; \On)$ such that, up to a subsequence,
		\begin{equation}\label{comp}
			\beta_\varepsilon \to \beta \text{ in }\LG(\Omega; \Mno),\qquad v_\varepsilon \to 1 \text{ in }L^1(\Omega);
		\end{equation}
		\item[(LB)] 
		For any $(\beta_\varepsilon, v_\varepsilon)_\varepsilon \subset L^1(\Omega;\Mno \times[0,1])$ and $\beta \in \PC(\Omega; \On)$ satisfying \eqref{comp}, it holds
		\begin{equation*}\label{liminf}
			\F^\lambda(\beta,1)\leq \liminf_{\varepsilon \to 0} \F_\varepsilon(\beta_\varepsilon, v_\varepsilon);
		\end{equation*}
		\item[(UB)] 
		For any $\beta \in \PC(\Omega; \On)$ there exists $(\beta_\varepsilon, v_\varepsilon)_\varepsilon \subset L^1(\Omega;\Mno \times[0,1])$ satisfying \eqref{comp} and
		\begin{equation}\label{limsupthm}
			\F^\lambda(\beta,1)\geq \limsup_{\varepsilon \to 0} \F_\varepsilon(\beta_\varepsilon, v_\varepsilon).
		\end{equation}
	\end{itemize}
\end{theorem}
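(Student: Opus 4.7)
The plan is to establish (C), (LB), (UB) separately, with the cell problem defining $g_\lambda$ (introduced in Section~\ref{sec:surfen}) as the bridge between $\F_\varepsilon$ and $\F^\lambda$. For the compactness (C), the Modica-Mortola part yields $\int_\Omega (1-v_\varepsilon)^2\dx\le 4\varepsilon\,\AT_\varepsilon(v_\varepsilon;\Omega)\le C\varepsilon$, hence $v_\varepsilon\to 1$ in $L^2$; the penalization gives $\int_\Omega\dist^2(\beta_\varepsilon,\On)\dx\le C\delta_\varepsilon\to 0$, so any $L^1$ cluster point of $(\beta_\varepsilon)$ takes values in $\On$ almost everywhere. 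To produce such a cluster point I would adapt the Ambrosio--Tortorelli compactness strategy in the group-valued setting of \cite{CicFocZep21}: for a threshold $\eta\in (0,1)$, on the bulk $\{v_\varepsilon>1-\eta\}$ one has $f_\varepsilon(v_\varepsilon)=M_\varepsilon$, so $\int|\nabla\beta_\varepsilon|^2\le C/M_\varepsilon^2\to 0$, while the diffuse set $\{v_\varepsilon\le 1-\eta\}$ has measure $O(\varepsilon)$. A coarea/slicing argument on $v_\varepsilon$ yields $\tilde\beta_\varepsilon\in\SBV(\Omega;\Mno)$ with equibounded $\SBV$-norm and $\hN(J_{\tilde\beta_\varepsilon})\le C$, and Ambrosio's $\SBV$ compactness extracts a subsequence converging in $L^1$ to some $\beta$. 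The divergence of $M_\varepsilon$ then forces $\nabla\beta=0$, placing $\beta\in\PC(\Omega;\On)$; the $\LG$-convergence with a suitable $G_\varepsilon\in L^\infty(\Omega;\G)$ is obtained by locally selecting, on each grain of the limit partition, the representative that matches $\beta_\varepsilon$.

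For the liminf (LB), I would use a blow-up/localization argument. Let $\mu_\varepsilon:=\F_\varepsilon(\beta_\varepsilon,v_\varepsilon;\cdot)$ be the nonnegative measure on $\Omega$; up to a subsequence $\mu_\varepsilon\weakstarto\mu$ as Radon measures. At $\hN$-a.e.\ $x_0\in J_\beta$, the rescaled pair $\hat\beta_\varepsilon(y):=\beta_\varepsilon(x_0+\varepsilon y)$, $\hat v_\varepsilon(y):=v_\varepsilon(x_0+\varepsilon y)$ is an admissible competitor for the cell problem defining $g_\lambda$ with traces $\beta^\pm(x_0)$ and normal $\nu_\beta(x_0)$; invariance \eqref{eq-glambda-invariance} guarantees that composition with $G_\varepsilon$ does not alter the infimum. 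Radon--Nikodym differentiation then yields $\mu\ge g_\lambda(\beta^-,\beta^+)\hN\mres J_\beta$, and integration concludes.

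For the limsup (UB), I would first apply Theorem~\ref{theoliftingmanifold} to lift $\beta\in\PC(\Omega;\On)$ to a convenient $\SBV$ representative whose jump set decomposes into the physical grain boundaries and, possibly, $\G$-jumps of trivial surface cost. On each almost-flat piece of $J_\beta$ with traces $R^\pm$ and normal $\nu$, I would transport an almost-optimal competitor for the cell problem defining $g_\lambda(R^-,R^+)$ to a tubular layer of width $O(\varepsilon)$, extending by the constant values $R^\pm$ outside. The local energies add up to $g_\lambda(R^-,R^+)\hN$ of each piece; summing over a polyhedral approximation of $J_\beta$ and sending $\varepsilon\to 0$ yields \eqref{limsupthm}, while a density argument handles the general case.

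The main technical obstacle I anticipate is the lifting-and-gluing step in (UB): Theorem~\ref{theoliftingmanifold} is essential to prevent spurious $\G$-jumps in the recovery sequence, since two grains differing by a constant element of $\G$ must contribute zero energy but a naive transition profile across such an interface would produce an order-one diffuse gradient. The gluing of local cell-problem competitors near triple points, $\G$-interfaces, and $\partial\Omega$ also requires care, and matching the logarithmic Read--Shockley scaling in the construction (adapting \cite[Section~7]{CFI14}) further complicates the quantitative estimates.
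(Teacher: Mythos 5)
Your overall architecture (compactness, blow-up lower bound, polyhedral recovery sequence) matches the paper's, but the compactness step contains a genuine gap. You claim that a coarea/slicing argument on $v_\varepsilon$ yields $\tilde\beta_\varepsilon\in\SBV(\Omega;\Mno)$ with equibounded $\SBV$-norm and $\hN(J_{\tilde\beta_\varepsilon})\le C$. This is false: the energy $\F_\varepsilon$ assigns \emph{zero} cost to jumps of $\beta_\varepsilon$ whose traces differ by an element of $\G$, so $\hN(J_{\beta_\varepsilon})$ is completely uncontrolled along the sequence (each $\beta_\varepsilon\in\SBVG$ has finite jump set individually, but there is no uniform bound), and Ambrosio's $\SBV$ compactness cannot be applied to $\beta_\varepsilon$ or to any truncation of it. This is precisely the central obstruction of the $\G$-invariant setting, and your closing remark about ``locally selecting the representative that matches $\beta_\varepsilon$'' presupposes a limit partition that you have not yet produced. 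The paper resolves this by passing to $\G$-invariant quantities: $h_n:=\beta_n^T\beta_n$ and $|\det\beta_n|$ lie in $W^{1,1}$ because the $\G$-jumps cancel, the polar factor $k_n=\beta_n h_n^{-1/2}$ then defines a map with equibounded variation in $\SBV(\Omega;\OnG)$ after quotienting, and the lifting Theorem~\ref{theoliftingmanifold} produces the multipliers $G_n\in L^\infty(\Omega;\G)$ for which $G_n\beta_n$ has equibounded $\BV$ norm. Without some version of this detour (or the metric-space $\BV$ compactness of Theorem~\ref{t:comp1} applied to $[\beta_\varepsilon]$, again followed by the lifting theorem), your extraction of an $L^1$-convergent subsequence does not go through.

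Two further points in (LB) need repair, though they are less structural. First, you blow up at scale $\varepsilon$; the correct scale is a sequence $\rho_k$ with $\varepsilon_k/\rho_k\to0$ chosen by Radon--Nikodym differentiation of the limit measure, after which the rescaled functional carries parameters $\widehat M_k=M_{\varepsilon_k}/\sqrt{\rho_k}$, $\widehat\varepsilon_k=\varepsilon_k/\rho_k$, $\widehat\delta_k=\delta_{\varepsilon_k}/\rho_k$. Second, the rescaled pair is \emph{not} an admissible competitor for the cell problem: the boundary conditions are attained only asymptotically, and the truncation $M_\varepsilon\wedge\sqrt{\varepsilon}f$ destroys the scale invariance that would let you compare directly with $g_\lambda$. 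The paper handles both issues through the intermediate truncated densities $g_{j,\lambda}$ (with $f$ replaced by $h_j$), the asymptotic-boundary-value Lemma~\ref{lemmalbGjlambdaboundaryvalues}, the one-dimensional Lemma~\ref{l:lb1d}, and the slice-lifting Lemma~\ref{l:clean}. Your (UB) sketch is essentially the paper's strategy (polyhedral density, tubular optimal profiles, cutoff interpolation near the $(N-2)$-skeleton), except that the lifting theorem is not needed there since $\beta$ is already given in $\PC(\Omega;\On)$; the $\G$-jump issue is handled instead by inserting a single jump with traces differing by the optimal $G_l\in\G$ inside each transition layer, and the case $\lambda=\infty$ requires the separate construction of Lemma~\ref{lemmalambdainfty}.
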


\subsection{Application to image segmentation}
 To apply Theorem~\ref{thm:main} to the
 segmentation of images of polycrystals,
 we assume $N=d$ and
 introduce a fidelity term inspired by \cite{BerkelsRaetzRumpfVoigt2008,ElsWir}.
Let $w\in C(\Omega; \R)$
represent the grayscale image of a polycrystal, and let $\sigma>0$ be a small parameter of the order of the lattice spacing.
We fix a finite number of  lattice vectors
$v_1,\,\dots,\, v_K \in \Rn$, with $|v_k|\leq \sigma$,
and positive weights $\alpha_k>0$, $k=1,\ldots,K$.

For a given domain $\Omega'$ such that $\{x\colon \dist(x,\Omega') <2\sigma\}\subset\subset \Omega$
we define the fidelity term by
\begin{equation}\label{0212231710}
	\FT(\beta; \Omega'):= \int_{E_\beta \cap \Omega'} \sum_{k=1}^{K}\alpha_k\sum_{G\in\G} (w(x+ \big(\beta(x)\big)^{-1}Gv_k)- w(x))^2 \dx,
\end{equation}
where
\begin{equation*}
	E_\beta:=\{x \in \Omega \colon \det(\beta(x))\ne 0, \, \|\big(\beta(x)\big)^{-1}\|_\mathrm{op}\leq 2\},
\end{equation*}
and
$\|\cdot\|_\mathrm{op}$ denotes the operator norm (i.e., the largest singular value). This definition  ensures that the points $x$ included in the integral satisfy $x+\big(\beta(x)\big)^{-1}Gv_k\in \Omega$, so that $\FT$ is well-defined.
Since the vectors $v_k$ belong to the lattice and $\G$ is the associated point group, the vectors $Gv_k$ also lie in the lattice. Thus, if $w$ is an image of a lattice with orientation $R\in\On$, then $w(x+RGv_k)=w(x)$  for all $G$ and $k$, and the fidelity term $\FT$ is minimized for $\beta=R^T$.

The definition of the fidelity term $\FT$ ensures invariance under the action of the group $\G$. In particular,
 for every $\beta\in \SBVG(\Omega; \Mno)$ and every $G\in L^\infty(\Omega;\G)$ it holds that
\begin{equation}\label{eq-fidelity-invariance}
	\FT(G\beta;\Omega')=\FT(\beta;\Omega').
\end{equation}
Moreover, by the continuity of $w$
and the fact that any sequence with bounded energy converges to a limit valued in $\On$, which implies $|\Omega\setminus E_{\beta_\eps}|\to0$,
the fidelity term $\FT$
 is continuous
with respect to the $\LG$ convergence of $\beta$ along sequences with bounded energy. 
Hence, by Theorem~\ref{thm:main},
for any fixed $\gamma>0$ the sequence of functionals
\begin{equation}\label{2611231320}	\cH_\varepsilon(\beta,v):=\F_\varepsilon(\beta,v;\Omega) + \gamma \, \FT(\beta; \Omega')
\end{equation}
$\Gamma$-converges
(with respect to the $\LG\times L^1$ convergence of  $(\beta,v)$)
to
\begin{equation}\label{2611231320l}
\cH^\lambda \EEE(\beta,v):=\F^\lambda(\beta, v \EEE;\Omega) + \gamma \, \FT(\beta; \Omega').
\end{equation}
In view of the equi-coerciveness established in Theorem \ref{thm:main}, this also implies convergence of minima and almost minimisers of $\cH_\varepsilon$ to those of $\cH^\lambda$.
Clearly the same would also apply to other types of fidelity terms,  as far as they have the appropriate invariance under the action of the group $\G$ and they are continuous with respect to the $\LG$ convergence along sequences with bounded energy.

\subsection{Main difficulties and novelties}

The key novelty of this paper lies in the presence of the point group $\G$, under whose action the sharp interface model remains invariant. Crucially, we are able to incorporate this invariance  also at the level of the approximating functionals. This requires working with an \emph{ad hoc}
notion of convergence for the fields that respects the group action, in appropriate spaces.

 A fundamental preliminary challenge is the choice of
a suitable functional space for the order parameters.
Two main approaches can be considered, both in the formulation of the statement and in the proofs. In the presentation above, we opted for the space $\SBVG$ defined in \eqref{SBVG},
which is a subset of the classical space $\SBV$.
A version of this space (under different notation)
appears naturally also in the modeling of fractional dislocations
in two dimensions \cite{BadCic, GolMerMil}.

An alternative approach is to define the functionals directly on functions taking values in the quotient space $\Mno/\G$. In this framework, one can use the metric structure of $\Mno/\G$ and work in the more abstract framework of Sobolev and $\BV$ spaces of functions with values in  metric spaces, as introduced by Ambrosio in \cite{Amb90SBVX} (see also \cite{Resh}).  Remarkably, in this metric space approach, the approximating functionals are defined on Sobolev functions as usual in phase field approximation, in particular the fields belong to the space $H^1(\Omega; \Mno/\G)$. However,
this more abstract setting comes with significant difficulties.
The lack of linear structure in the quotient space restricts the applicability  of  many
standard analytical and numerical tools, such as the Finite Elements Method.
This poses substantial obstacles for both theoretical analysis and numerical simulations.

Our chosen approach thus balances between capturing the essential group invariance and maintaining a workable structure for analysis and approximation.

In order to compare the two approaches, it is important to clarify the relation between $\SBVG(\Omega; \Mno)$ and $H^1(\Omega; \Mno/\G)$. 
It is  plain to see that for any function in $\SBVG(\Omega; \Mno)$, by passing pointwise to equivalence classes, one obtains a function in $H^1(\Omega; \Mno/\G)$ (cf.\ Remark~\ref{r:sbvgvsh1}). Conversely, it is not true in dimension $d\geq 2$ that any function in $H^1(\Omega; \Mno/\G)$ has a (pointwise) representative in $\SBVG(\Omega; \Mno)$, since in general the reconstructed jump set may have infinite measure, as in the counterexample of Lemma~\ref{le:contres}. 
Therefore, in general, the approximating functionals defined in $H^1(\Omega; \Mno/\G)$ are smaller than the ones defined in 
$\SBVG(\Omega; \Mno)$, cf.\ \eqref{0904251306}.

When the reference domain $\Omega$ has dimension 1, instead, its total ordering 
allows to prove that the two spaces are equivalent, in the sense described just above, and suitably represented 
by $H^1$ functions with values in $\Mno$, see Lemma~\ref{l:clean}.
Further, a similar property holds if the target set of the functions is restricted, in particular for the domains of the limiting functionals, which are
the spaces $\SBVG(\Omega; \On)$ and  $H^1(\Omega; \On/\G)$: in fact, in this case one may work in the framework of Sobolev or $\BV$ functions valued into manifolds, and 
a lifting is possible, see Theorem~\ref{theoliftingmanifold} below.

Basing on the former relations,
we develop the two approaches in such a way that each one can be followed independently, in particular Theorem~\ref{thm:main} can be proven without using the theory 
 of Sobolev or $\BV$ functions valued in metric spaces, as we now describe.
 
As for Compactness (C), we employ the convergence of the fields $\beta_n$ with equibounded energy to a function with values in $\On$ together with the fact that the functions $\mathrm{det}\,\beta_n$ are equibounded
 in $W^{1,1}$  in $\{v_n\geq c\}$ for $c>0$. The latter 
follows from the $\G$-invariance of the determinant
 and the uniform integral bound for $\nabla \beta_n$ in $\{v_n\geq c\}$.
Then, by the lifting result in Theorem~\ref{theoliftingmanifold}, we obtain a uniform bound in
 $\BV$, up to modifications in
 sets with bounded perimeter. 
 
 Both in the characterization of the limit domain $\PC$ and in the Lower Bound (LB) one may 
 reduce to one dimensional slices so,  by Lemma~\ref{l:clean},
 work with standard Sobolev function and adapt the analogous properties
from \cite{CFI14}. In the characterization of the limit domain we use here a strategy similar to the one in \cite{ConFocIur25}.
The Upper Bound (UB) employs a density result in $\PC$ through functions with jump set given by a finite union of pieces of hyperplanes \cite{BCG14}, allowing to use, where the jump is flat, optimal profiles from the one dimensional cell formula for $g_\lambda$, while a careful construction applies to deal with
neighborhoods of the $N-2$ dimensional intersections of different faces. 
 
Taking advantage of the abstract theory in \cite{Amb90SBVX}, one may prove the more general Compactness (C)' in Theorem~\ref{thm:maineq} (recall \eqref{0904251306}) arguing analogously to the ``classical'' $\BV$ case in \cite{CFI14}, while the more general Lower Bound (LB)' follows similarly to (LB), due to the 1d equivalence from Lemma~\ref{l:clean} and some fine properties from \cite{Amb90SBVX}. Conversely, the strongest Upper Bound is (UB), so (UB)' follows.  

The paper is structured as follows. In Section~\ref{sec-sobolev-metric} we introduce some notation and recall the basic theory of Sobolev and $\BV$ functions valued in metric spaces, discuss the relation with the counterparts which are subspaces of $\SBVG$, and introduce the formulation for functions valued in equivalence classes along with the main result Theorem~\ref{thm:maineq}. 
Then we prove in Section~\ref{subsec:comp} compactness  and the characterization of the domain for limiting fields.  Section~\ref{sec:surfen} is devoted to  define, describe, and characterize  the surface tension.  Such analysis will be crucial 
to prove the lower bound and the upper bound, in Section~\ref{sec:proofmain}.

\section{Formulation with Sobolev functions  valued in metric spaces}\label{sec-sobolev-metric}

This Section 
addresses the formulation of the problem with Sobolev functions valued in metric spaces.
We start with some technical preliminaries and notation, which are relevant for the entire paper.

\subsection{Notation}
In the entire paper, $\Omega$ is a bounded connected open subset of $\R^N$, $N\geq 1$,  with Lipschitz boundary, and $\mathcal{A}(\Omega)$ and $\mathcal{B}(\Omega)$ denote the set of open and  Borel \EEE subsets of $\Omega$, respectively.
When $\Omega$ is one dimensional we often denote it with $I$.
The Lebesgue measure and the $k$--dimensional  Hausdorff \EEE measure in $\R^N$ are denoted by $\calL^N$ and $\calH^{k}$, respectively.
We denote by $\mathrm{M}^{d\times d}$ the space of $d\times d$ matrices, with $d\geq 2$.
For $\beta\in\mathrm{M}^{d\times d}$, 
$\beta^T$ and $|\beta|$ are respectively its transpose matrix and  its Euclidean norm.

Given  a finite subgroup $\G$ of $\On$, that will be fixed along the paper,
we denote the quotient space of $\Mno$ with respect to $\G$ by  $\Mno/\G$ \EEE
and its elements by
\[
[\beta]=\G\beta:=\{G\beta\colon  G\in \G\}, \qquad \beta\in \Mno.
\]
The space $\Mno/\G$ is a separable complete metric space endowed with the metric
\begin{equation}
	\label{eq-metric-X}
	\dG([\beta], [\widetilde\beta]):= \min_{G\in \G} |\beta-G\widetilde\beta|\qquad \text{for every } [\beta], [\widetilde\beta]\in \Mno/\G.
\end{equation}

\subsection{Functional setting I: Metric-space valued Sobolev functions}\label{subsec:functionalsetting}
In what follows we always assume that
\begin{equation*}
	(X,\dix) \text{ is a separable locally compact metric space},
\end{equation*}
having in mind the application to $X=\Mno/\G$ or $X=\OnG$.
The latter is here seen as a subset of the first, endowed with the same metric, as defined in \eqref{eq-metric-X}. We observe that $\OnG$ could also be seen as a manifold, then it is natural to adopt on each tangent space the metric
that $\On$ inherits from $\Mno$, or $\On/\G$ from $\Mno/\G$, and then extend it to each connected component using geodesics; the result is a metric different from, but equivalent to, the one in
\eqref{eq-metric-X}.
The property of  $X$ being locally compact will be needed for the definition of the space  $\BV(\Omega; X)$, and not for Sobolev  spaces.  

\smallskip

For every $p\in[1,+\infty]$, we define the space $L^p(\Omega  ;  X)$  as  the space of all Borel functions $u:\Omega\to X$ such that there exists a $z_0\in X$ for which  $\dix (u(\cdot),z_0):\Omega\to\R$ is in $L^p(\Omega)$.
\begin{remark}\label{rem:1104251642}
	The $\LG$--convergence corresponds to the convergence of the equivalence classes in $L^1(\Omega;\Mno/\G)$, namely
	\[
	\beta_\varepsilon\to \beta \text{ in }\LG(\Omega;\Mno) \quad \text{if and only if} \quad [\beta_\varepsilon] \to \ebeta \text{ in }L^1(\Omega; {\Mno}/{\G}),
	\] 
	which is also equivalent to
	\[
	\lim_{\e \to 0} \int_\Omega \dG([\beta_\e(x)],[\beta(x)])\dx= 0.
	\]
\end{remark}

Let us consider a function $\beta\in \SBVG$, as defined in
\eqref{SBVG}. Then $\nabla\beta\in L^2$,
and $\beta^+\in \G \beta^-$ at almost every jump point. Therefore if we consider the equivalence classes corresponding to $\beta$, defined by
$[\beta]:\Omega\to\Mno/\G$, $x\mapsto [\beta(x)]$, the jumps disappear. In particular, this function is expected to be in $H^1(\Omega;\Mno/\G)$ (which has not been defined yet).

To formalize this setting, we recall the notion of Sobolev functions with values in a metric space from \cite{Resh}.
By $\Lip_1(X;\R^m)$ we denote the set of Lipschitz functions from  $X$  to $\R^m$ with Lipschitz constant $\Lip(\varphi)$ not larger than 1, 
for $m=1$ we write $\Lip_1(X)$.

\begin{definition} \label{def-sobolev-metric}
	The space $W^{1,p}(\Omega;X)$, $p\geq 1$, consists of those functions $u \in L^p(\Omega;X)$ such that
$\varphi\circ u \in W^{1,p}(\Omega)$ for every $\varphi\in \Lip(X)$.
For $m\in\N_{>0}$, if we set
\begin{equation}\label{f:ReshUG-m1}
		|\nabla u|_{(m)}:={\rm ess\, sup}\{|\nabla(\varphi\circ u)| : \   \varphi \in \Lip_1(X;\R^m) \},
	\end{equation}
	it holds $|\nabla u|_{(m)}\in L^p(\Omega)$.
We shall mostly take $m=d^2$, and when the meaning is clear from the context write briefly $|\nabla u|:=|\nabla u|_{(d^2)}$.
	In dimension $N=1$ we use the notation $|u'|$ for $|\nabla u|_{(1)}$.
\end{definition}

\begin{remark}
\label{rem-dep-m}
We observe that  for any $m\in\N_{>0}$
\begin{equation}\label{eqboundsnablaumnablau1}
 |\nabla u|_{(1)}\le
 |\nabla u|_{(m)}\le
 \sqrt {N\wedge m}\, |\nabla u|_{(1)} \hskip5mm\text{ pointwise almost everywhere.}
\end{equation}
To prove the first inequality we extend any function in $\Lip_1(X)$ to a function in $\Lip_1(X;\R^m)$ adding $m-1$ components which are identically zero. To prove the $\sqrt m$ bound in the second one, we
observe that, if $\varphi=(\varphi_1,\ldots,\varphi_m)\in \Lip_1(X;\R^m)$, then for each $i\in\{1,\dots, m\}$ we have
$\varphi_i\in \Lip_1(X)$, 
and $|\nabla (\varphi\circ u)|=|(\nabla (\varphi_1\circ u), \dots, \nabla(\varphi_m\circ u))|
\le \sqrt m \max_i |\nabla (\varphi_i\circ u)|$.
To prove the final inequality,
let $\varphi\in \Lip_1(X;\R^m)$, fix $v\in \mathbb{S}^{N-1}$, and observe that
for almost every $x\in\Omega$ and $t$ small enough, writing $I_t$ the segment with endpoints $0$ and $t$, one has
\begin{equation}
\begin{split}
 |\varphi(u(x+tv))-\varphi(u(x))|
 \le & |\dX(u(x+tv), u(x))|
 \le \int_{I_t} |\nabla (\dX\circ u)|(x+sv) \ds\\
 \le& \int_{I_t} |\nabla u|_{(1)}(x+sv)\ds.
\end{split}\end{equation}
Therefore if $x$ is a Lebesgue point of  the slice of $|\nabla u|_{(1)}$,
then
\begin{equation}
\limsup_{t\to0} \frac{|\varphi(u(x+tv))-\varphi(u(x))|}{|t|}
\le \limsup_{t\to0} \frac{1}{|t|}
\int_{I_t} |\nabla u|_{(1)}(x+sv)\ds
= |\nabla u|_{(1)}(x)
\end{equation}
and we obtain $|\nabla_v (\varphi\circ u)|\le |\nabla u|_{(1)}$ almost everywhere.
Finally, for any $g\in W^{1,p}(\Omega;\R^m)$ we have
\begin{equation}
 |\nabla g|^2(x)
 =\sum_i |\partial_i g|^2(x)
 \le N \max_i |\partial_i g|^2(x).
\end{equation}
Using this with $g=\varphi\circ u$,
we obtain
$|\nabla (\varphi\circ u)|_{(m)}\le \sqrt N |\nabla u|_{(1)}$ almost everywhere which concludes the proof of \eqref{eqboundsnablaumnablau1}.

We next show that
\begin{equation}\label{eqnablaumNm}
\text{if $X\subset\R^m$, then $|\nabla u|_{(m)}=|\nabla u|_{\R^{m\times N}}$,}
\end{equation}
 where the last norm is the usual Euclidean norm of the weak gradient.
 To see this, observe that any $\varphi\in \Lip_1(X;\R^m)$ has an extension $\widehat\varphi\in \Lip_1(\R^m;\R^m)$, and by the chain rule
 $|\nabla (\widehat\varphi\circ u)|=|(\nabla\widehat\varphi)\circ u \, \nabla u|\le |\nabla u|$.
At the same time, for $\varphi(x)=x$ the inequality is an equality, which proves \eqref{eqnablaumNm}.
A similar argument shows that the bounds in
\eqref{eqboundsnablaumnablau1} are optimal.
\end{remark}

\begin{remark}\label{r:sbvgvsh1}
	The functions in $\SBVG(\Omega;\Mno)$ are stricty related to the functions in the Sobolev space 
	$H^1(\Omega;\Mno/\G):=W^{1,2}(\Omega;\Mno/\G)$.
To see this,
	for a Borel function $\beta\colon \Omega \to \Mno$ we define
	\[
	[\beta]\colon \Omega \to \Mno/\G, \qquad [\beta](x):=[\beta(x)], \  x\in \Omega.
	\]
By the chain rule in $\BV$, if $\beta \in \SBVG(\Omega;\Mno)$ and $\varphi\in \Lip( {\Mno}/{\G};\R^m)$, then
	$\varphi\circ \ebeta$ belongs to $H^1(\Omega;\R^m)$, and
	\begin{equation}\label{f:ReshUG1}
		|\nabla(\varphi\circ \ebeta)| \leq \Lip(\varphi)  |\nabla\beta|, \qquad  \hbox{a.e.\ in }\ \Omega,
	\end{equation}
	so that $\ebeta \in H^1(\Omega; \Mno/\G)$, and from \eqref{f:ReshUG1}
	\begin{equation}\label{f:ststgrad}
		|\nabla \ebeta|_{(m)}\leq |\nabla\beta|, \quad m\in \N_{>0}.
	\end{equation}
	Hence the functions in $\SBVG$ can be identified with Sobolev functions with values in the quotient space $\Mno/\G$.
\end{remark}

If the domain is one dimensional, the two spaces $\SBVG(I;\Mno)$ and $H^1(I;\Mno/\G)$ are actually equivalent. We can moreover ``clean the jumps'' of a $\SBVG(I;\Mno)$ function, obtaining a representative in $H^1(I;\Mno)$, as shown in Lemma~\ref{l:clean} below. This result will be crucial for our proof.

If the domain is at least two dimensional, the two spaces are not equivalent, see Lemma~\ref{le:contres} below. However, the counterexample is based on the fact that functions in $\SBVG(I;\Mno)$ are required to have jump set of finite measure. If this property is eliminated from the definition, then we expect that it is possible to identify the two spaces
(see Remark~\ref{ref:lifting} below).

\begin{lemma}\label{l:clean}
Let $I=(a,b)\subset \R$ be a bounded interval, and let $\beta: I\to \Mno$ be a Borel function. Then the following hold:
\begin{itemize}
\item[i)] if  $[\beta] \in H^1(I;\Mno/\G)$, then there exists $G_\beta \in L^\infty(I;\G)$ such that $G_\beta \beta \in H^1(I; \Mno)$,
$G_\beta \beta(a)=\beta(a)$, and
$|[\beta]'|_{(m)}=| (G_\beta\beta)'|$ almost everywhere in $I$ for any $m\in\N_{>0}$;
\item[ii)]  if, in addition, $\beta\in \SBVG(I;\Mno)$,  then $(G_\beta\beta)'=G_\beta\beta'$ almost everywhere in $I$.
\end{itemize}
\end{lemma}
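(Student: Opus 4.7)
The plan is to construct a continuous lift $\gamma\colon I\to\Mno$ of (a continuous representative of) $[\beta]$ with $\gamma(a)=\beta(a)$, verify that $\gamma\in H^1(I;\Mno)$ with $|\gamma'|=|[\beta]'|_{(m)}$ almost everywhere, and then define $G_\beta$ as a Borel selection from the (finite) set $\{G\in\G\colon G\beta(x)=\gamma(x)\}$ so that $G_\beta\beta=\gamma$ pointwise a.e. The one-dimensional Sobolev embedding applied to $\varphi\circ[\beta]$ for every $\varphi\in\Lip_1(\Mno/\G)$, together with the separability of $\Mno/\G$, yields a continuous representative $\widetilde{[\beta]}\colon I\to\Mno/\G$ of $[\beta]$.

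\textbf{Continuous lift.} The projection $\pi\colon\Mno\to\Mno/\G$ is proper with finite fibres and, away from the branch locus $B:=\bigcup_{G\in\G\setminus\{I\}}\{y\in\Mno\colon Gy=y\}$ (a finite union of proper linear subspaces of $\Mno$), it restricts to an isometry on any ball of radius less than $\tfrac12\min\{|G\gamma_0-\gamma_0|\colon G\gamma_0\neq\gamma_0\}>0$ centred at $\gamma_0\notin B$. On such balls the lift is uniquely defined by the nearest-preimage rule. At $\gamma_0\in B$ the ambiguity is confined to the stabilizer $\G_{\gamma_0}\subset\G$; since $\G\subset\On$, every $G\in\G_{\gamma_0}$ satisfies $|Gy-\gamma_0|=|G(y-\gamma_0)|=|y-\gamma_0|$, so the preimages closest to $\gamma_0$ are equidistant and any consistent tiebreaker yields a continuous local extension. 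A maximal-continuation (Zorn) argument, combined with the uniform continuity of $\widetilde{[\beta]}$ on $[a,b]$, produces the global continuous lift $\gamma$ with $\gamma(a)=\beta(a)$; the local isometry property then gives $|\gamma(y)-\gamma(x)|=\dG(\widetilde{[\beta]}(y),\widetilde{[\beta]}(x))$ for $x,y$ in a common element of a sufficiently fine partition of $I$, showing that $\gamma$ is absolutely continuous.

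\textbf{Derivative identity and selection of $G_\beta$.} At any Lebesgue point $x_0\in I$ of $\gamma'$, every $G\in\G$ with $G\gamma(x_0)\neq\gamma(x_0)$ satisfies $|\gamma(x_0+h)-G\gamma(x_0)|\geq c>0$ for $|h|$ small, while for $G\in\G_{\gamma(x_0)}$ one has $|\gamma(x_0+h)-G\gamma(x_0)|=|\gamma(x_0+h)-\gamma(x_0)|$, hence
\[
\dG\!\left([\gamma(x_0+h)],[\gamma(x_0)]\right)=|\gamma(x_0+h)-\gamma(x_0)|\qquad\text{for }|h|\text{ small}.
\]
Dividing by $|h|$ and passing to the limit yields $|[\beta]'|_{(1)}(x_0)=|\gamma'(x_0)|$; combined with $N=1$ in \eqref{eqboundsnablaumnablau1} and \eqref{eqnablaumNm} applied to $\Mno\subset\R^{d^2}$, the identity extends to every $m\in\N_{>0}$. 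To construct $G_\beta$, at almost every $x\in I$ the set $S(x):=\{G\in\G\colon G\beta(x)=\gamma(x)\}$ is nonempty because $[\beta(x)]=[\gamma(x)]$ in $\Mno/\G$; fixing an enumeration $G_1,\dots,G_{|\G|}$ of $\G$ and setting $G_\beta(x):=G_{k(x)}$ with $k(x):=\min\{k\colon G_k\in S(x)\}$ produces a Borel function $G_\beta\in L^\infty(I;\G)$ with $G_\beta\beta=\gamma$ almost everywhere, and $G_\beta(a)\beta(a)=\gamma(a)=\beta(a)$ by construction.

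\textbf{Part (ii) and main obstacle.} For (ii), $G_\beta$ takes values in the finite set $\G$, so its distributional derivative has no absolutely continuous part, and $J_{G_\beta}\subset J_\beta$ is at most countable; on each subinterval where $G_\beta$ is constantly equal to $G_0$, the identity $(G_\beta\beta)'=G_0\beta'=G_\beta\beta'$ holds by linearity, which gives the desired equality almost everywhere on $I$. I expect the main technical difficulty to lie in the construction of the continuous lift through the branch locus $B$, where $\pi$ fails to be a local homeomorphism; this is resolved by the observation that $\G_{\gamma_0}$ acts by isometries fixing $\gamma_0$, so the ambiguity in the nearest-preimage rule occurs only among equidistant lifts and any consistent tiebreaker preserves continuity.
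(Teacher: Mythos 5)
Your route is genuinely different from the paper's: you try to build the lift directly as a continuous section of the branched covering $\pi\colon\Mno\to\Mno/\G$, whereas the paper never constructs the lift pointwise at all — it builds piecewise-affine approximants $\beta_k$ whose nodal values are chosen inductively to realize the quotient distance, gets a uniform $H^1$ bound from a H\"older estimate, and obtains $\beta_*=G_\beta\beta$ as a weak $H^1$ limit, with the derivative identity falling out of lower semicontinuity combined with \eqref{f:ststgrad}. That detour is precisely what lets the paper avoid the branch locus $B=\bigcup_{G\ne\Id}\ker(G-\Id)\cdot\Mno$, which is where your argument has real gaps. First, the lift construction: the nearest-preimage rule anchored at $\gamma(t_0)$ is not continuous in general (take $\G=\{\pm\Id\}$ and a path whose orbit winds around $[0]$: the preimage nearest to a fixed base point switches sheets halfway around), so one must re-anchor on intervals where the oscillation of the path is small compared to the local injectivity radius $\tfrac12\min\{|G\gamma_0-\gamma_0|\colon G\gamma_0\ne\gamma_0\}$ — and this radius degenerates to $0$ as $\gamma_0\to B$. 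Since $\{t\colon\gamma(t)\in B\}$ can be an arbitrary compact set (e.g.\ a fat Cantor set for a path repeatedly touching the zero matrix), "Zorn plus a consistent tiebreaker" does not produce a continuous extension in a neighbourhood of such times; it only gives convergence \emph{at} them. Second, even granting a continuous lift, your absolute-continuity argument relies on $|\gamma(y)-\gamma(x)|=\dG([\beta](y),[\beta](x))$ on the cells of a fine partition, which again fails on cells meeting $B$ for the same reason; without $\gamma\in W^{1,1}$ (let alone $\gamma'\in L^2$), the subsequent "Lebesgue point of $\gamma'$" computation has nothing to stand on. (That computation itself is fine once $\gamma\in H^1$ is known, modulo invoking the a.e.\ existence of the metric derivative of $[\beta]$ and its identification with $|[\beta]'|_{(1)}$.)

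Part (ii) is also incomplete as written: $G_\beta$ is only a measurable selection into a finite set, and on $\{t\colon\beta(t)\in B\}$ it need not be locally constant, so "on each subinterval where $G_\beta\equiv G_0$" does not cover almost every point. The paper's fix is the right one: work at a point $t$ of approximate differentiability of $\beta$ and $G_\beta\beta$ which is also a point of density $1$ of $\{G_\beta=G_\beta(t)\}$, and compare the two first-order expansions on that density-one set to conclude $G_\beta(t)\beta'(t)=(G_\beta\beta)'(t)$. In summary: your overall strategy (continuous lift, then differentiate) can be made to work, but the passage through the branch locus — which you correctly flag as the main obstacle — is exactly the step that is not resolved by the tiebreaker remark, and the paper's interpolation-plus-compactness argument is designed to make that issue disappear.
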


\begin{proof}
Let $\beta: I\to \Mno$ be such that $[\beta](x):=[\beta(x)]$ belongs to $H^1(I;\Mno/\G)$, and therefore it admits a continuous representative (see, e.g. \cite[Remark 2.2]{Amb90SBVX}).
First of all, we prove that $[\beta]\in  C^{1/2}(I;\Mno/\G)$. To see this, fix $x\in I$, and consider
$\varphi_x(U):=\dG(U,[\beta](x))$, $U\in \Mno/\G$. Since $\varphi_x$ is 1-Lipschitz, and $[\beta] \in H^1(I;\Mno/\G)$,
we have
\begin{equation*}\begin{split}
 \dG([\beta](x),[\beta](y)])
 =&\varphi_x([\beta](y))-\varphi_x([\beta](x))
 \le \int_{[x,y]} |(\varphi_x\circ[\beta])'| \dt
 \\
 \le& |x-y|^{1/2} \|(\varphi_x\circ[\beta])'\|_{L^2(I)}
\le |x-y|^{1/2} \| \left|[\beta]'\right|_{(m)}\|_{L^2(I)}.
\end{split}\end{equation*}

In order to prove i),  we first construct a sequence $(\beta_k)_k$ of functions in $H^1(I;\mathrm{M}^{d\times d})$ such that
	\begin{equation}\label{eqclaimseqbetakh1}
		\| \beta'_k\|_{L^2(I)}^2 \le \| \left|[\beta]'\right|_{(m)}\|_{L^2(I)}^2, \hskip1cm 
		\beta_k(a)=\beta(a)
		, \hskip1cm 
		\|\dG([\beta_k],[\beta])\|_{L^\infty(I)}\to0. 
	\end{equation}
	To do this, 
	we subdivide the interval $I$ in $k$ subintervals, choosing $a_0:=a<a_1<\dots <a_k:=b$
	such that $|a_{i+1}-a_i|\le 2 (b-a)/k$ for all $i$. 
	Further, for $i\ge 1$
	the function 
$\phi_i:\mathrm{M}^{d\times d}\to\R$ defined by
$\phi_i( U):=\dG([\beta(a_{i-1})],  U)$
	is 1-Lipschitz, hence $ \phi_i\circ [\beta]\in H^1(I)$ and, 
	by \eqref{f:ReshUG-m1},
	\begin{equation}\label{f:clean1}
		\phi_i^2([\beta(a_i)])
		\le (a_i-a_{i-1})
		\int_{a_{i-1}}^{a_i} \Big|\frac{\di}{\dt} (\phi_i\circ [\beta])\Big|^2 \dt
		\le (a_i-a_{i-1})
		\int_{a_{i-1}}^{a_i} | [\beta]'|_{(m)}^2\dt.
	\end{equation}
	We set $\beta_k(a_0)=\beta(a_0)$, then inductively select $\beta_k(a_i)\in [\beta](a_i)$ 
	such that
	\begin{equation}\label{f:clean2}
		|\beta_k(a_i)-\beta_k(a_{i-1})|=
		\dG([\beta(a_i)],[\beta(a_{i-1})])=\phi_i([\beta(a_i)]),
	\end{equation}
	and define the function $\beta_k$ as the affine interpolation in each of the intervals. 
	By \eqref{f:clean2} and \eqref{f:clean1} we get
	\begin{equation}\label{f:clean3}
		\int_{a_{i-1}}^{a_i} |\beta_k'|^2 \dt
		=\frac1{a_i-a_{i-1}} |\beta_k(a_i)-\beta_k(a_{i-1})|^2\le \int_{a_{i-1}}^{a_i} | [\beta]'|_{(m)}^2\dt,
	\end{equation}
	and the first condition in \eqref{eqclaimseqbetakh1} follows. The second one holds by the choice of $\beta_k(a_0)$. The third one follows by the embedding of $H^1$ in $C^{1/2}$, with a computation similar to \eqref{f:clean1}. More explicitly, we obtain for any $t\in (a_{i-1},a_i)$, 
	\[
	\begin{split}
\dG([\beta_k](t),[\beta](t))
		& \le |\beta_k(t)-\beta_k(a_{i-1})|+
		\phi_i([\beta(t)]) \leq  	\phi_i([\beta(a_i)])+ 	\phi_i([\beta(t)])\\
		& \le 2 \sqrt{a_i-a_{i-1}}\left(\int_{a_{i-1}}^{a_i} |[\beta]'|_{(m)}^2\dt\right)^{\frac12} \leq \frac{C}{\sqrt{k}} 
	\end{split}
	\]
	and therefore the uniform convergence  to zero in  \eqref{eqclaimseqbetakh1} is proved.
	
	Now
	by compactness, one obtains that $(\beta_k)_k$ has a subsequence converging weakly in $H^1(I; \mathrm{M}^{d\times d})$ (and almost everywhere in $I$) to some 
$\beta_*\in H^1(I;\mathrm{M}^{d\times d})$, with $\beta_*(a)=\beta(a)$. By the continuity of the distance $\dG$ we obtain $[\beta_*]=[\beta]$ almost everywhere, and hence there exists $G_\beta: I\to \G$ such that $\beta_*=G_\beta\beta$ almost everywhere. Since $\G$ is finite one immediately obtains that $G_\beta$ can be chosen measurable. By the lower semicontinuity of the norm, and \eqref{f:clean3} we obtain
	\[
	\int_I |\beta_*'|^2 \dt \leq \int_I | [\beta]'|_{(m)}^2\dt= \int_I | [\beta_*]'|_{(m)}^2\dt 
	\] 
that, together with \eqref{f:ststgrad} (applied to $\beta_*$), implies that $|\beta_*'|=|[\beta]'|_{(m)}$ a.e. in $I$, concluding the proof of the first part of the Lemma.
	
	Now we assume, in addition, that $\beta$ belongs to $\SBVG(I;\Mno)$, and we show that $(G_\beta\beta)'=G_\beta \beta'$ almost everywhere in $I$. 
	To do this, fix any $t\in I$ which is a point of approximate differentiability of $\beta$ and $\beta_*$,  
	so that
	\begin{equation}\label{eqreconstrlebspt}
		\lim_{r\to0} \int_{t-r}^{t+r}
		\frac{|\beta(s)-\beta(t)-\beta'(t)(s-t)|}{r^2}
		+\frac{|\beta_*(s)-\beta_*(t)-\beta_*'(t)(s-t)|}{r^2}
		\, \ds=0.
	\end{equation}
	If $t$ is also a Lebesgue point of $G_\beta$, as $\G$ is discrete, we obtain that the set 
	\[
	E_r:=\{s\in (t-r,t+r): G_\beta(s)= G_\beta(t)\}
	\] 
	obeys $|E_r|/r\to2$ as $r\to0$. 
Using $\beta_*(s)=G_\beta(t)\beta(s)$ in $E_r$ and \eqref{eqreconstrlebspt} we obtain
	\begin{equation*}
\lim_{r\to0} \int_{E_r}
\frac{|(G_\beta(t)\beta'(t)-\beta_*'(t))(s-t)|}{r^2} \, \ds
=0
	\end{equation*}
	which implies  the desired identity
	$G_\beta(t)\beta'(t)=\beta_*'(t)$.
\end{proof}

The next Lemma shows with an example that
there are functions in
$\SBV(\Omega;\Mdue/\G)$, with $N$, $d\ge2$,
for which we cannot find
a representative with jump set of finite measure. Therefore in dimension at least two the result proved in Lemma~\ref{l:clean} does not hold.

\begin{lemma}\label{le:contres}
	If $N\ge2$, $d\ge2$, and $\G\cap\SOn\ne\{\Id\}$, then there is $u\in H^1((0,1)^N;\Mno/\G)$ such that
	there is no $\beta_0\in \SBVG((0,1)^N;\Mno)$ with $u=[\beta_0]$.
\end{lemma}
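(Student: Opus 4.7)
The plan is to build $u$ from a sequence of half-vortex defects supported in disjoint balls $B_{r_k}(x_k)\subset\Omega$: each vortex contributes $O(r_k^2)$ to the Reshetnyak energy but forces any $\SBVG$ lift to carry a jump curve of length at least $r_k$ across $B_{r_k}(x_k)$. Choosing $r_k=c/k$ with $c>0$ small enough to allow a dyadic packing, we have $\sum_k r_k^2<\infty$ (ensuring $u\in H^1$) and $\sum_k r_k=+\infty$ (ruling out any $\SBVG$ lift). I first reduce to $N=d=2$ with $-\Id\in\G$: any non-trivial $G_0\in\G\cap\SOn$ acts as a non-trivial rotation on some two-dimensional invariant subspace of $\R^d$, and the construction will live in the corresponding $2\times 2$ block of $\Mno$ (with the half-angle $R(\arg/2)$ replaced by $R(\arg/n)$ when $G_0$ has order $n>2$, and Fubini in the transverse variables handling $N>2$).

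In $\Omega=(0,1)^2$, set
\[
\beta_k(x):=\eta_k(|x-x_k|)\,R\!\left(\tfrac{\arg(x-x_k)}{2}\right),
\]
where $\eta_k\in C([0,r_k])$ is a triangular bump with $\eta_k(0)=\eta_k(r_k)=0$, $\eta_k>0$ on $(0,r_k)$, and $|\eta_k'|\leq 1$; define $u(x):=[\beta_k(x)]$ for $x\in B_{r_k}(x_k)$ and $u(x):=[0]$ elsewhere. The monodromy of $R(\arg/2)$ around $x_k$ is $R(\pi)=-\Id\in\G$, so $[\beta_k(x)]$ is single-valued on $B_{r_k}(x_k)\setminus\{x_k\}$; the vanishing of $\eta_k$ at the centre and boundary of each ball makes $u$ continuous at $x_k$ and across $\partial B_{r_k}(x_k)$. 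The jumps of $\beta_k$ across the branch cut of $\arg$ are $\G$-jumps, hence invisible after composition with any $\varphi\in\Lip_1(\Mno/\G;\R^m)$; therefore $|\nabla u|_{(m)}\leq|\nabla\beta_k|$ a.e.\ on $B_{r_k}(x_k)$, and the polar-coordinate identity $|\nabla\beta_k|^2=2|\eta_k'|^2+\eta_k^2/(2r^2)$ yields $\int_{B_{r_k}(x_k)}|\nabla u|^2\,\di x\leq C r_k^2$. Summing over $k$ shows $u\in H^1(\Omega;\Mno/\G)$.

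To prove no $\beta_0\in\SBVG(\Omega;\Mno)$ with $[\beta_0]=u$ a.e. exists, I establish the local bound $\calH^1(J_{\beta_0}\cap B_{r_k}(x_k))\geq r_k$; summing then gives $\calH^1(J_{\beta_0})\geq\sum_k r_k=+\infty$, contradicting $\beta_0\in\SBVG$. The local bound follows by slicing along the concentric circles $\gamma_t:=\partial B_t(x_k)$, $t\in(0,r_k)$: for a.e.\ such $t$, standard $BV$-slicing theory gives $\beta_0|_{\gamma_t}$ of bounded variation on $\gamma_t$, and the pointwise relation $[\beta_0]=u$ yields a representation $\beta_0|_{\gamma_t}(\theta)=\sigma(\theta)\eta_k(t)R(\theta/2)$ with $\sigma(\theta)\in\{\pm 1\}$. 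If $\beta_0|_{\gamma_t}$ had no jump, then $\sigma$ would be continuous on $[0,2\pi]$ and therefore constant; but the closed-loop identity $\beta_0|_{\gamma_t}(0)=\beta_0|_{\gamma_t}(2\pi)$ combined with $R(0)=\Id\neq -\Id=R(\pi)$ forces $\sigma(0)=-\sigma(2\pi)$, a contradiction. Hence $\#(J_{\beta_0}\cap\gamma_t)\geq 1$ for a.e.\ such $t$, and the coarea formula applied to $x\mapsto|x-x_k|$ yields $\calH^1(J_{\beta_0}\cap B_{r_k}(x_k))\geq\int_0^{r_k}\#(J_{\beta_0}\cap\gamma_t)\,\di t\geq r_k$. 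The main obstacle is justifying this circular-slicing argument rigorously within the $BV$ framework; once that is done, the extensions to $d\geq 3$ (via the $2\times 2$-block embedding) and $N\geq 3$ (Fubini in the transverse variables) are routine.
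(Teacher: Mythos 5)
Your construction is essentially the paper's own proof: disjoint balls $B_{r_k}(x_k)$ with $\sum_k r_k=\infty$, a $\G$-monodromy vortex in each ball whose quotient is $H^1$, and a circular-slicing argument showing that any pointwise lift must jump on a set of length $\gtrsim r_k$ in each ball, the contradiction coming from the fact that a continuous map into the finite set $\G$ along a jump-free circle must be constant. Two remarks. First, a harmless stylistic difference: you make the amplitude $\eta_k$ vanish linearly at the centre so that each vortex costs $O(r_k^2)$ in Dirichlet energy, whereas the paper keeps a fixed rescaled profile (which is scale-invariant in $2$D) and multiplies by amplitudes $a_k=2^{-k}$; both give $u\in H^1$. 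Second, your reduction to $d\ge3$ is not correct as stated: a nontrivial $G_0\in\G\cap\SOn$ indeed has a two-dimensional invariant plane on which it acts as a nontrivial rotation, but $G_0$ generally also acts nontrivially on the orthogonal complement, so the loop $\theta\mapsto R(\alpha\theta/2\pi)\oplus\Id$ ends at the block rotation extended by the identity, which need not lie in $\G$, and the quotient $[\beta]$ would then fail to be single-valued. The clean fix (which is what the paper does) is to replace the planar half-vortex by $\beta(te^{i\theta})=\eta(t)\gamma(\theta)$ for an arbitrary continuous path $\gamma:[0,2\pi]\to\SO(d)$ with $\gamma(0)=\Id$, $\gamma(2\pi)=G_0$, available by connectedness of $\SO(d)$; correspondingly, in the slicing step one should not assume the ambiguity is a sign $\sigma(\theta)\in\{\pm1\}$ but an arbitrary $G(\theta)\in\G$, and conclude constancy from discreteness of $\G$ (the paper's $w(\theta)=v(\theta)\gamma(\theta)^{-1}$ device). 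With these adjustments your argument coincides with the paper's.
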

\begin{proof}
	We provide an explicit construction in the case $N=2$, the case $N>2$ follows by extending the construction so that it does not depend on the variables $x_3,\dots, x_N$, in the sense that $u^{(N)}(x)=u^{(2)}(x_1,x_2)$.
	
	Select $G\in\G\cap\SOn$, $G\ne\Id$, and then
	pick a curve $\gamma\in C^1([0,2\pi];\SO(d))$ such that
	$\gamma(0)=\Id$, $\gamma(2\pi)=G$. This exists since $\SO(d)$ is connected.
	Fix $\varphi\in C^1_c((0,1);[0,1])$ such that $\{\varphi>0\}=(1/4,3/4)$.
We start working in the unit ball $B_1$ and use polar coordinates. We define,
for $t\in (0,1)$ and $\theta\in [0,2\pi)$,
	\begin{equation*}
	\beta_{1}(t e^{i\theta}) :=
\varphi(t) \gamma (\theta).
	\end{equation*}
	We observe that $\beta_{1}\in \SBV(B_1;\Mno)$, 
	with $\beta_1=0$ around $\partial B_1$, $\nabla \beta_1\in L^\infty(B_1)$,
	$J_{\beta_1}=(1/4,3/4)e_1$,
	$\beta_1^-(te_1)=\varphi(t)G$; $\beta_1^+(te_1)=\varphi(t)\Id$, so that
	$\beta_1\in \SBVG(B_1;\Mno)$.
	We define
	\begin{equation*}
		c_2:=\|\nabla \beta_1\|_{L^2(B_1)}^2\in (0,\infty), 
		\hskip5mm
		c_J:=\calH^1(J_{\beta_1}\cap B_1)=\frac 12,
		\hskip5mm
		c_D:=\int_0^1 |\beta_1^+-\beta_1^-|(te_1) \dt\in (0,\infty).
	\end{equation*}
	 
	Consider countably many pairwise disjoint balls 
	$B_k:=B_{r_k}(x_k)$ contained in the unit square $Q:=(0,1)^2$ such that
	\begin{equation*}
		\sum_k r_k=\infty.
	\end{equation*}
	To choose them one can, for example, take any dyadic decomposition of the unit square into countably many subsquares and consider the circle inscribed in each subsquare.
	We set, for $a:\N\to(0,\infty)$ chosen below and $K\in\N$,
	\begin{equation*}
		\beta_K(x):=
		\sum_{k=1}^K a_k \chi_{B_k}(x) \beta_1 \left( \frac{x-x_k}{r_k}\right)
	\end{equation*}
	and verify that $\beta_K\in \SBVG(Q;\Mno)$,
	with $[\beta_K]\in H^1(Q;\Mno/\G)$.
	We compute
	\begin{equation*}
		\|\nabla \beta_K\|_{L^2(Q)}^2
		=c_2\sum_{k=1}^K a_k^2  ,
		\hskip5mm
		\calH^1(J_{\beta_K}\cap Q)
		=c_J\sum_{k=1}^K r_k ,
		\hskip5mm
    \int_{J_{\beta_K}} |\beta_K^+-\beta_K^-| \, d\calH^1
		=c_D\sum_{k=1}^K a_k r_k .
	\end{equation*}
	Obviously $r_k\to0$; we select $a_k=2^{-k}$. Then
	the sequence $(\beta_K)_K$ converges uniformly and strongly in $\BV$ to some $\beta_\infty$.
	In each ball $B_k$, we have $\beta_\infty=\beta_h$ for any $h\ge k$; outside the union of the balls $\beta_\infty=0$.
	One checks that $ u:=[\beta_\infty] \in H^1(Q;\Mno/\G)$.
	
	It remains to show that there is no $\beta\in \SBVG(Q;\Mno)$ with $[\beta]=u$.
	
	Assume there was one, pick any $h\in \N$, and consider the ball $B_h$. 
	We claim that necessarily 
	\begin{equation}\label{eqjumpjbetaBh}
		\calH^1(J_\beta\cap B_h)\ge \frac 12 r_h.
	\end{equation}
	If this holds, then $\calH^1(J_\beta\cap Q)\ge \frac12 \sum_h r_h=\infty$, so that $\beta\not\in \SBV^2(Q;\Mno)$  and the proof is concluded.
	
	It remains to prove \eqref{eqjumpjbetaBh}.
	Assume this was not the case. 
	Then by slicing there would be a set $E\subset(0,1)$ 
	with $\calL^1(E)>1/2$ such that for any $t\in E$ the 
	$2\pi$-periodic
	function 
	\begin{equation*}
		v(\theta):= \beta(x_h+r_ht e^{i\theta})
	\end{equation*}
	obeys $v\in W^{1,1}_\mathrm{\loc}(\R;\Mno)$, we identify it with its continuous representative.
By our definitions, $[v(\theta)]=a_h\varphi(t)[\gamma(\theta)]$.
	We pick $t\in E$ with $\varphi(t)>0$ and seek a contradiction.
	We define $w:[0,2\pi]\to\Mno$ by
	\begin{equation*}
		w(\theta):= v(\theta)(\gamma(\theta))^{-1}.
	\end{equation*}
	This function takes values in $a_h\varphi(t)\G$. We recall that $a_h\varphi(t)\ne0$, so that this set consists of invertible matrices.
	Using periodicity of $v$ and the boundary conditions on $\gamma$ we obtain
	\begin{equation*}
		w(0)=v(0)(\gamma(0))^{-1}=v(0), \hskip1cm
		w(2\pi)=v(2\pi)(\gamma(2\pi))^{-1}=v(0)G^{-1}.
	\end{equation*}
	However, as $w$ is continuous on $[0,2\pi]$, it is constant.
	Since $v(0)$ is an invertible matrix, this gives $G=\Id$, a contradiction.
\end{proof}

\subsection{Functional setting II: Metric-space valued $\BV$ functions}
We recall from \cite{Amb90SBVX} the definition of functions of Bounded Variation into a general separable locally compact metric space $(X,\dX)$. 
\begin{definition}
	\label{def-BV-metric}
	Let 
	$u\in L^1(\Omega ; X)$. We say that $u$ belongs to $\BV(\Omega; X)$ if for every $\varphi \in \Lip(X;\R^m)$ one has $\varphi\circ u \in \BV(\Omega;\R^m	)$ and there exists a finite Borel measure $\sigma$
(which may depend on $m$, but not on $\varphi$)
	such that
	\begin{equation}\label{f:BVX}
		|D(\varphi\circ u)| \leq \Lip(\varphi) \,\sigma.
	\end{equation}
In this case,
for $m\in\N_{>0}$
the total variation measure of $u$ is defined as
	\begin{equation}
		\label{eq-sup-measures}
		|D u|_{(m)}:=\sup\{|D(\varphi\circ u)|\,:\ \varphi\in \Lip_1(X;\R^m)\},
	\end{equation}
	where the supremum of measures is taken as in Definition 1.68 in \cite{AmbFusPal00}.
\end{definition}
As for the case of Sobolev functions, the value of $|Du|_{(m)}$ depends on $m$ (see Remark~\ref{rem-dep-m}).

The usual decomposition of $|Du|$ holds true in $\BV(\Omega;X)$ (see Theorems 2.2 and 2.3, and Remark 2.3 in \cite{Amb90SBVX}):
\begin{equation}
	\label{eq-gradu-decomposition}
	|Du|_{(m)}= |\nabla u|_{(m)} \,\calL^N + \dix(u^+,u^-) \, \calH^{N-1}\LL J_u + |Du|_{(m)}^c,
\end{equation}
where $ |\nabla u|_{(m)} (x)=\sup\{|\nabla(\varphi\circ u)|(x) \colon \varphi\in \Lip_1(X;\R^m) \}\in L^1(\Omega)$,
$J_u$ is a countably $\calH^{N-1}$--rectifiable subset of $\R^N$ with unit normal $\nu(x)$ defined for $\calH^{N-1}$-a.e.\ $x \in J_u$, and the traces $u^\pm(x)=u^\pm(x, \nu(x))$ are characterized by
\begin{equation}\label{f:upm}
\lim_{\rho \to 0^+} \frac{B_\rho(x) \cap \{y\in\Omega \colon\ \pm(y-x)\cdot \nu(x)>0,\ \dix(u(y),u^\pm(x))>\eps\}}{\rho^d}=0 \qquad \forall \eps>0.
\end{equation}

\begin{remark}
	\label{rem-BV-SBV}
	Following \cite{Amb90SBVX}, $u\in \SBV(\Omega; X)$ if $u\in \BV(\Omega; X)$ and $|Du|^c_{(m)}=0$. This is equivalent to require that $u\in \BV(\Omega; X)$ and  $\varphi\circ u\in \SBV(\Omega; \R^m)$ for every $\varphi\in \Lip(X;\R^m)$ (see
\cite[Remark 2.4]{Amb90SBVX}). 
%		Moreover, as a consequence  of \cite[Proposition 2.1]{Amb90SBVX},  we have that if $\calE\subset \Lip_1(X;\R^m)$ is a countable family of $1$-Lipschitz functions which separates points in $\Omega$, then
%		\begin{equation}
%			\label{eq-lip=SBV-grad}
%			|D u|_{(m)}=\sup\{|D(\varphi\circ u)|\,:\ \varphi\in \calE\},
%		\end{equation}
%		for all $u\in \BV(\Omega;X)$. 
		\end{remark}
	
As in the case of $X$-valued Sobolev functions, the norm depends on $m$, but the space does not. In working with $\Mno/\G$ we shall mostly take $m=d^2$ and, when there is no risk of confusion, simply write $|Du|$ for $|Du|_{(d^2)}$.

	\begin{remark}
		\label{rem-SVB0-Sob}
	The decomposition \eqref{eq-gradu-decomposition} and definition \eqref{eq-sup-measures} also  allow to characterize subspaces of $\BV(\Omega;X)$.
	
	To this aim we recall the following basic property of the derivation of measures, also used in \cite{Amb90SBVX}:  if $\sigma$ is a positive measure, and $\mu$, with $\mu(\Omega)<+\infty$, is the supremum of a countable family of positive measures $\mu_h$, and $\mu_h=f_h\sigma +\mu_h^s$,  where $\mu_h^s$ denotes the singular part of $\mu_h$ with respect to $\sigma$ and $f_h\in L^1(\Omega;\sigma)$  (i.e. $f_h$ is in $L^1$ with respect to the measure $\sigma$),  then
	\begin{equation}
		\label{eq-decompos-measure}
		\mu=f\sigma +\mu^s\,,\qquad  \hbox{with } f=\sup f_h\qquad \sigma\EEE\hbox{-a.e.\ in } \Omega. 
	\end{equation}
	Therefore from  Remark~\ref{rem-BV-SBV}  we have
	\begin{equation}
		\label{eq-lip-SVB}
		u\in \SBV(\Omega;X)\qquad \Longleftrightarrow \qquad  \varphi\circ u\in \SBV(\Omega;\R^m)\ \hbox{for all } \varphi\in \Lip(X;\R^m).
	\end{equation}
	Moreover we also define the space  $\SBV_0(\Omega;X)$ as the set of functions $u\in \SBV(\Omega;X)$ such that $|\nabla u|=0$, which is also equivalent to
	\begin{equation}
		\label{eq-lip-SVB0}
	\varphi 	\circ u\in \SBV_0(\Omega;\R^m)\ \hbox{for all } \varphi\in \Lip(X;\R^m).
	\end{equation}
	Finally, when $u \in \SBV(\Omega; X)$ has no jump, and  $|\nabla u|_{(m)} \in L^p(\Omega)$, the notion above complies with the definition of $W^{1,p}(\Omega; X)$ in \cite{Resh} recalled in Definition~\ref{def-sobolev-metric}.
\end{remark}

We recall here the compactness results in $\BV$  proven in \cite[Theorem 2.4]{Amb90SBVX}.

\begin{theorem}\label{t:BVXcomp}
	Let $(u_n)_{n\in\N}$ be a sequence of functions in $\BV(\Omega;X)$, and $m\in\N_{>0}$.
	%with $X$ a separable locally compact metric space, $m\in\N_{>0}$.
	If there exists $z_0\in X$ such that
	\[
	\sup_{n\in\N} \left( |Du_n|_{(m)}(\Omega)+\int_\Omega \dix(u_n(x),z_0)\, \dx \right)<+\infty,
	\]
	then there exist a subsequence $(u_{n_k})_k$ and a function $u\in \BV(\Omega;X)$ such that
	$u_{n_k}$ converges to $u$ a.e.\ in $\Omega$, and
	\[
	|Du|_{(m)}(\Omega) \leq \liminf_{k \to+\infty}  |Du_{n_k}|_{(m)}(\Omega).
	\]
\end{theorem}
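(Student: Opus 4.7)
The plan is to reduce the vector-valued compactness statement to scalar $\BV$ compactness via a Kuratowski-type embedding, then reconstruct a pointwise limit in $X$ using the local compactness of the target.

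\emph{Step 1 (countable family of test functions).} Since $X$ is separable, I pick a countable dense subset $\{x_i\}_{i\in\N}\subset X$ and define the $1$-Lipschitz scalar maps
\begin{equation*}
\varphi_i:X\to\R,\qquad \varphi_i(z):=\dX(z,x_i)-\dX(z_0,x_i).
\end{equation*}
The family $\{\varphi_i\}$ separates points of $X$, and $\varphi_i(z_0)=0$ for every $i$. By definition of $\BV(\Omega;X)$, each composition $\varphi_i\circ u_n$ lies in $\BV(\Omega;\R)$ with $|D(\varphi_i\circ u_n)|(\Omega)\le |Du_n|_{(m)}(\Omega)$ (using \eqref{eqboundsnablaumnablau1} for $|\cdot|_{(1)}\le|\cdot|_{(m)}$, which extends to the $\BV$ setting via \eqref{eq-sup-measures}). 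Moreover $|\varphi_i\circ u_n(x)|\le \dX(u_n(x),z_0)+|\varphi_i(z_0)|$, so the hypothesis gives a uniform $L^1$-bound. Hence each sequence $(\varphi_i\circ u_n)_n$ is equi-bounded in $\BV(\Omega;\R)$.

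\emph{Step 2 (diagonal extraction and scalar limits).} Applying the classical $\BV$-compactness theorem to $i=1$, $i=2$, and so on, and extracting a diagonal subsequence, I obtain indices $n_k\uparrow\infty$ and limits $\psi_i\in\BV(\Omega;\R)$ with
\begin{equation*}
\varphi_i\circ u_{n_k}\to \psi_i\quad\text{in }L^1(\Omega)\text{ and a.e. in }\Omega,\qquad \text{for every }i\in\N.
\end{equation*}
Discarding a Lebesgue-negligible set, I may assume this convergence holds at every point of a full-measure set $\Omega_0$, and that on $\Omega_0$ the bound $\liminf_k\dX(u_{n_k}(x),z_0)<+\infty$ holds (by Fatou).

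\emph{Step 3 (pointwise reconstruction in $X$).} For $x\in\Omega_0$, the sequence $(u_{n_k}(x))_k$ lies, up to a subsequence, in a closed ball of $X$ around $z_0$. Local compactness of $X$ together with the separability of $X$ ensures that closed balls are contained in a countable union of compact sets, so from any bounded subsequence one can extract a convergent one. Using that the countable family $\{\varphi_i\}$ separates points, any two such cluster points must share all the values $\psi_i(x)$ and hence coincide. I therefore define $u(x)\in X$ as the unique cluster point, obtaining $u_{n_k}(x)\to u(x)$ in $X$ along the full subsequence, and $\varphi_i(u(x))=\psi_i(x)$ for every $i$ and a.e. $x$.

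\emph{Step 4 ($u\in\BV(\Omega;X)$ and lower semicontinuity).} For any $\varphi\in\Lip(X;\R^m)$, continuity yields $\varphi\circ u_{n_k}\to \varphi\circ u$ a.e.; combined with the $\BV$-bound $|D(\varphi\circ u_{n_k})|(\Omega)\le \Lip(\varphi)\,|Du_{n_k}|_{(m)}(\Omega)\le C\Lip(\varphi)$, lower semicontinuity of the total variation gives $\varphi\circ u\in\BV(\Omega;\R^m)$ together with
\begin{equation*}
|D(\varphi\circ u)|(\Omega)\le \Lip(\varphi)\,\liminf_{k\to\infty}|Du_{n_k}|_{(m)}(\Omega).
\end{equation*}
Applying the same bound on an arbitrary open set $A\subset\Omega$ and invoking \eqref{f:BVX} with $\sigma$ defined as a weak-$*$ cluster point of $|Du_{n_k}|_{(m)}$ (which exists by the mass bound and Banach-Alaoglu on $\R^N$-valued Radon measures) shows $u\in\BV(\Omega;X)$. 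Finally, taking the supremum over $\varphi\in\Lip_1(X;\R^m)$ in the previous inequality and recalling the definition \eqref{eq-sup-measures} of $|Du|_{(m)}$ yields the claimed lower semicontinuity $|Du|_{(m)}(\Omega)\le \liminf_k |Du_{n_k}|_{(m)}(\Omega)$.

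The main obstacle is Step 3: passing from the pointwise convergence of the countable family $\varphi_i\circ u_{n_k}$ to the convergence of $u_{n_k}(x)$ in $X$ along a \emph{common} subsequence independent of $x$. The key ingredient is local compactness, which allows one to trap the trajectories $(u_{n_k}(x))_k$ in a $\sigma$-compact set on a full-measure subset where $\dX(u_{n_k}(x),z_0)$ stays bounded, so that the separation property of $\{\varphi_i\}$ forces a unique cluster point.
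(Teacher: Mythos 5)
First, a point of comparison: the paper does not prove this statement at all — it is recalled from \cite[Theorem 2.4]{Amb90SBVX} — so there is no internal proof to measure yours against. Your strategy (separating points of $X$ with the countable family $\varphi_i(z)=\dX(z,x_i)-\dX(z_0,x_i)$, scalar $\BV$ compactness plus a diagonal argument, pointwise reconstruction of $u$, and recovery of $u\in\BV(\Omega;X)$ and of lower semicontinuity through Lipschitz test maps) is the standard route, and Steps 1, 2 and 4 are essentially sound. (In Step 4 one should localize on disjoint open sets before taking the supremum in \eqref{eq-sup-measures}, since that supremum is a supremum of \emph{measures} in the sense of \cite[Definition~1.68]{AmbFusPal00} rather than of total masses; this is routine.)

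The genuine gap is in Step 3, in the inference "closed balls are contained in a countable union of compact sets, so from any bounded subsequence one can extract a convergent one." A locally compact separable metric space is indeed $\sigma$-compact, but $\sigma$-compactness of a ball does not give sequential compactness: take $X=\N$ with $\dX(i,j)=1$ for $i\ne j$. This $X$ is locally compact, separable and complete, every ball is a countable union of compact singletons, yet $(k)_k$ is bounded with no convergent subsequence; worse, the constant maps $u_n\equiv n$ satisfy all the hypotheses of the theorem (with $|Du_n|_{(m)}=0$) and admit no a.e.\ convergent subsequence. So the extraction you need cannot follow from local compactness alone: what is actually used is that closed balls of $X$ are \emph{compact} (properness of $X$), or equivalently that the ranges are confined to a fixed compact set. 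This holds for the targets relevant to the paper, $\Mno/\G$ and $\OnG$ (quotients of a finite-dimensional space, resp.\ a compact set, by a finite isometry group), so your argument is complete in those cases; in the stated generality the properness hypothesis must be added or imported from the precise formulation in \cite{Amb90SBVX}. A secondary point in the same step: uniqueness of the cluster point yields convergence of the full subsequence $(u_{n_k}(x))_k$ only if its entire tail lies in one compact ball; the Fatou bound controls only the $\liminf$ of $\dX(u_{n_k}(x),z_0)$, but the full bound follows from the convergence of $\varphi_i\circ u_{n_k}(x)$ for an index $i$ with $x_i$ close to $z_0$, since $\dX(z,z_0)\le \varphi_i(z)+2\dX(x_i,z_0)$.
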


We also recall that one can characterize functions in $\BV(\Omega;X)$ via slicing as follows.
We fix $\xi\in \SN:= \{ \xi\in\R^N\colon\ |\xi|=1\}$ and we set
\[
\Pi^\xi:=\{y\in\R^N\colon\ y\cdot \xi=0\}, \quad 
\Omega_y^\xi:=\{t\in\R\colon\ y+t\xi \in\Omega\}, \quad
\Omega^\xi:=\{y\in\Pi^\xi \colon \Omega_y^\xi \neq \emptyset\}.
\]
For any function $u\colon \Omega \to X$ we define  the slices $u_y^\xi\colon \Omega_y^\xi\to X$ by
$$
u_y^\xi(t):=u(y+t\xi).
$$
 The following result is an immediate consequence of \cite[Proposition 2.1]{Amb90SBVX}, stated with our notation and also formulated for Sobolev functions.

\begin{proposition}
	\label{prop-slicing}
The following hold:
	\begin{itemize}
		\item[i)] if $u\in \BV(\Omega;X)$, then for every $\xi\in \SN$ we have
\begin{equation}\label{3004251203}
		u_y^\xi\in \BV(\Omega_y^\xi; X)\quad \hbox{and} \quad 	|D u|_{(1)}\geq |D(u\xy)| \quad  \text{as measures in }\Omega\xy, \EEE \text{ for }\hN\text{-a.e.\ }y\in \Omega^\xi;
	\end{equation}
\item[ii)]	if $u\in W^{1,p}(\Omega; X)$, then for every $\xi\in \SN$ we have $u_y^\xi\in W^{1,p}(\Omega_y^\xi; X)$ and
$$
|\nabla u (x)|\geq |(u\xy)'(t_x)| \quad\text{for }\hN\text{-a.e.\ }y\in \Omega^\xi\quad \hbox{and} \quad \text{a.e.\ }x\in \Omega,
$$
where $t_x \in \Omega\xy$ is such that $x=y+t_x \xi$.
		\end{itemize}
\end{proposition}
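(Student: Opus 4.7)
The plan is to reduce both statements to the well-known slicing theorems for real-valued $\BV$ and Sobolev functions by composing $u$ with Lipschitz test functions, as in the proof of \cite[Proposition~2.1]{Amb90SBVX}. The key preliminary observation is that, since $(X,\dX)$ is separable, one can find a countable family $\{\varphi_k\}_{k\in\N}\subset\Lip_1(X)$ which is separating in the metric sense, i.e., $\dX(z,w)=\sup_k|\varphi_k(z)-\varphi_k(w)|$ for all $z,w\in X$; concretely one can take $\{\dX(\cdot,z_k)\wedge h : k,h\in\N\}$ with $\{z_k\}$ dense in $X$. Thanks to \eqref{eq-decompos-measure}, the supremum over such a countable family already realizes both $|\nabla u|_{(1)}$ pointwise almost everywhere and $|Du|_{(1)}$ as a Radon measure.

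For part i), fix $\xi\in \SN$. By definition each $\varphi_k\circ u$ lies in $\BV(\Omega)$ with $|D(\varphi_k\circ u)|\le |Du|_{(1)}$. Applying the classical $\BV$ slicing theorem (see, e.g., \cite[Theorem~3.103]{AmbFusPal00}) to $\varphi_k\circ u$ for every $k$, and denoting by $\sigma_y^\xi$ the slice of the finite Radon measure $|Du|_{(1)}$ along the line through $y$ in direction $\xi$, one obtains an $\hN$-negligible set $N\subset\Omega^\xi$, independent of $k$ as a countable union of the $k$-dependent exceptional sets, outside which $\sigma_y^\xi$ is a finite Borel measure on $\Omega_y^\xi$ and $\varphi_k\circ u_y^\xi=(\varphi_k\circ u)_y^\xi\in \BV(\Omega_y^\xi)$ with $|D(\varphi_k\circ u_y^\xi)|\le \sigma_y^\xi$ for every $k$. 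The separating property of $\{\varphi_k\}$, together with a standard density argument, then extends this estimate to every $\varphi\in\Lip(X;\R^m)$, so $u_y^\xi\in \BV(\Omega_y^\xi;X)$ via Definition~\ref{def-BV-metric}; taking the supremum over $k$ in $|D(\varphi_k\circ u_y^\xi)|\le \sigma_y^\xi$ and invoking \eqref{eq-sup-measures} on $\Omega_y^\xi$ yields $\sigma_y^\xi\ge |D u_y^\xi|_{(1)}$, which is the measure inequality claimed.

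Part ii) runs along the same lines but is simpler, as no jump or Cantor-type contribution appears. For $u\in W^{1,p}(\Omega;X)$ each $\varphi_k\circ u$ belongs to $W^{1,p}(\Omega)$, and the classical one-dimensional Sobolev slicing theorem gives, for $\hN$-a.e.\ $y\in\Omega^\xi$, $(\varphi_k\circ u)_y^\xi=\varphi_k\circ u_y^\xi\in W^{1,p}(\Omega_y^\xi)$ with $|(\varphi_k\circ u_y^\xi)'(t)|\le|\nabla(\varphi_k\circ u)(y+t\xi)|$. Combining this with $|\nabla(\varphi_k\circ u)|\le |\nabla u|_{(1)}$ from \eqref{f:ReshUG-m1}, collecting the countable union of null sets, and taking the supremum over $k$ yields $u_y^\xi\in W^{1,p}(\Omega_y^\xi;X)$ together with $|(u_y^\xi)'(t_x)|\le|\nabla u|_{(1)}(x)\le|\nabla u|(x)$, the last inequality being the first one in \eqref{eqboundsnablaumnablau1} from Remark~\ref{rem-dep-m}. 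The main technical hurdle, common to both parts, is the passage from the countable separating family back to \emph{all} Lipschitz test functions, both in the pointwise and in the measure sense; this is precisely the abstraction carried out in \cite[Proposition~2.1]{Amb90SBVX}, which makes the present statement an immediate consequence.
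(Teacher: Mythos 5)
Your argument is correct and is essentially the paper's: the authors prove i) by invoking \cite[Proposition~2.1]{Amb90SBVX} together with a localization, which is exactly the countable-separating-family reduction to scalar slicing that you reconstruct. The only (harmless) divergence is in ii), which you prove directly by slicing the scalar Sobolev functions $\varphi_k\circ u$, whereas the paper deduces it from i) and the decomposition \eqref{eq-gradu-decomposition} of $|Du|$; both routes are valid and of comparable length.
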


\begin{proof}
	The proof of i) follows from \cite[Proposition 2.1]{Amb90SBVX} and a localization argument.
	Property ii) follows from i) and the characterization of $|Du|$ given in \eqref{eq-gradu-decomposition}.
\end{proof}

We prove
that maps in $\SBV(\Omega;\OnG)$ have a lifting in $\SBV(\Omega;\On)$, with a bound on the norm. This
could be obtained easily as a consequence of
 a general result by
Canevari and Orlandi \cite{CanOrl20JFA}
on maps valued in compact manifolds with a universal covering.
In our case, the manifold is $\OnG$, and permits a simpler argument.
We remark that in the case $d=2$, this also follows from the simple proof in \cite{DavilaIgnat2003lifting} for maps valued in $\mathbb{S}^1$. In higher dimension, however, a localization is needed, as in \cite{CanOrl20JFA}.
Our proof applies without significant changes both for $\SOn$ and for $\On$;
we present it only in the case of $\On$, which is the one used below.

\begin{theorem}\label{theoliftingmanifold}
 Let $u\in \SBV(\Omega;\OnG)$, $\Omega\subset\R^N$ open and bounded. Then there is $\beta\in \SBV(\Omega;\On)$ such that
 \begin{equation}
 u=[\beta] \text{ pointwise a.e. in $\Omega$,  and } |D\beta|(\Omega)\le C |Du|_{(d^2)}(\Omega).
  \end{equation}
The constant depends only on $d$ and $\G$.
\end{theorem}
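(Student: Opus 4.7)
The plan is to cover $\OnG$ by finitely many evenly covered neighborhoods, partition $\Omega$ accordingly via the coarea formula, and lift $u$ locally by isometric sections of the quotient map $\pi:\On\to\OnG$. The finite, freely acting group $\G$ makes this covering essentially ``trivial'' in a uniform way, which is the key geometric input.

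First I would quantify how far $\pi$ is from being an isometry. Set $c:=\min_{G\in\G\setminus\{\Id\}}|G-\Id|>0$ (positive since $\G$ is finite). Since $|R-GR|=|(\Id-G)R|=|G-\Id|$ for every $R\in\On$ (Frobenius norm), a short triangle-inequality check shows that for $r_0:=c/8$ the restriction $\pi\big|_{B_{r_0}(R)}:B_{r_0}(R)\to B_{r_0}([R])$ is an isometric bijection for every $R\in\On$; indeed if $R_1,R_2\in B_{r_0}(R)$ and $G\ne\Id$, then $|R_1-GR_2|\ge c-2r_0>2r_0>|R_1-R_2|$, so the minimum defining $\dG$ is attained at $G=\Id$. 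By compactness of $\OnG$ I would pick $[R_1],\dots,[R_M]$ with $M=M(d,\G)$ whose balls $B_{r_0/2}([R_j])$ cover $\OnG$, and fix the corresponding isometric sections $\sigma_j:B_{r_0}([R_j])\to B_{r_0}(R_j)\subset\On$.

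Next I would produce a measurable partition of $\Omega$ with finite total perimeter. The maps $\phi_j(x):=\dG(u(x),[R_j])$ are compositions of $u$ with $1$-Lipschitz functions, hence by Definition~\ref{def-BV-metric} they lie in $\BV(\Omega;\R)$ with $|D\phi_j|(\Omega)\le|Du|_{(1)}(\Omega)\le|Du|_{(d^2)}(\Omega)$. The $\BV$ coarea formula gives
$$
\int_{r_0/2}^{r_0}P(\{\phi_j<t\},\Omega)\di t\le|D\phi_j|(\Omega),
$$
so one can select $t_j\in(r_0/2,r_0)$ with $P(E_j,\Omega)\le(2/r_0)|Du|_{(d^2)}(\Omega)$ where $E_j:=\{\phi_j<t_j\}$. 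Since the balls $B_{r_0/2}([R_j])$ cover $\OnG$, the sets $E_j$ cover $\Omega$; each $E_j$ is contained in the evenly covered ball $u^{-1}(B_{r_0}([R_j]))$. Disjointifying by $A_j:=E_j\setminus\bigcup_{i<j}E_i$ yields a Borel partition of $\Omega$ with $\sum_j P(A_j,\Omega)\le CM^2 r_0^{-1}|Du|_{(d^2)}(\Omega)$.

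Finally I would define $\beta(x):=\sigma_j(u(x))$ on each $A_j$. Because $\sigma_j$ is $1$-Lipschitz into $\R^{d^2}$, the composition satisfies $|D(\sigma_j\circ u)|(A_j)\le|Du|_{(d^2)}(A_j)$ and $\sigma_j\circ u\in\SBV(A_j;\Mno)$ by Remark~\ref{rem-BV-SBV}. Across the interfaces $\partial^*A_j\cap\partial^*A_i$ the map $\beta$ picks up ``artificial'' jumps, of size at most $\mathrm{diam}(\On)\le 2\sqrt d$, since continuity of $u$ there is not preserved by the change of section. Summing all contributions one obtains
$$
|D\beta|(\Omega)\le\sum_j|D(\sigma_j\circ u)|(A_j)+2\sqrt d\sum_j P(A_j,\Omega)\le C(d,\G)\,|Du|_{(d^2)}(\Omega),
$$
and $[\beta]=u$ a.e.\ by construction. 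The main subtle step is the coarea-based choice of the levels $t_j$: without it the natural sets $u^{-1}(B_{r_0}([R_j]))$ could have infinite perimeter, breaking the $\SBV$ structure of $\beta$. Everything else is a routine application of the $\SBV$ chain rule combined with the fact that on evenly covered patches the section $\sigma_j$ is a genuine isometry.
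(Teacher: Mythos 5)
Your proposal is correct and follows essentially the same route as the paper's proof: a local isometry of the quotient map on balls of radius comparable to $\min_{G\ne\Id}|G-\Id|$, a coarea-formula selection of sublevel sets of $x\mapsto\dG(u(x),[R_j])$ with controlled perimeter, a finite cover of $\OnG$ with disjointification, and patching the local sections with the artificial jumps bounded by $\diam(\On)$ times the total perimeter. The only differences are cosmetic (your $r_0=c/8$ versus the paper's $c/4$, and your explicit mention of the $\SBV$ chain rule to confirm $\beta\in\SBV$).
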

\begin{proof}
 Let $r_0:=\frac14 \min\{|G-G'|: G, G'\in\G, G\ne G'\}$.

 \emph{Step 1. We can identify $B_{r_0}([Z])$ with
 $B_{r_0}(Z)$.}
Fix $Z\in \OnG$. We check that $Y\mapsto [Y]$ is an isometric bijection of
$B_{r_0}(Z)\subset\On$ onto $B_{r_0}([Z])\subset\OnG$.
To see this, the key fact is that if $Y$, $Y'\in B_{r_0}(Z)$ and
$G\in \G\setminus\{\Id\}$ then
\begin{equation}\label{eqgyyp}
 |GY-Y'|> |Y-Y'|,
\end{equation}
which implies that $\dG([Y],[Y'])=|Y-Y'|$.
In turn, to show \eqref{eqgyyp} it suffices to observe that, since $Y\in\On$
and $|Y-Y'|\le |Y-Z|+|Y'-Z|<2r_0$,
\begin{equation}
 |GY-Y'|=|(G-\Id)Y+(Y-Y')|\ge |G-\Id|-|Y-Y'|
\ge 4r_0-2r_0>|Y-Y'|.
\end{equation}
We denote by  $\psi_Z:B_{r_0}([Z])\to B_{r_0}(Z)\subset\On$ the inverse of $Y\mapsto [Y]$.

\emph{Step 2. Localization via the coarea formula.}
  We claim that for any $Z\in \On$ there is $r\in (r_0/2,r_0)$ such that
 \begin{equation}\label{eqchoicer}
  \calH^{N-1}(\partial^*(u^{-1}(B_r([Z]))))<  c |Du|(\Omega).
 \end{equation}
To see this, consider $\varphi:\OnG\to\R$ defined by
$\varphi(Y):=\dG(Y,[Z])$, and
notice that $u^{-1}(B_t([Z]))=\{x: \varphi(u(x))<t\}$.
The function $\varphi$ is obviously $1$-Lipschitz, therefore $\varphi\circ u \in \SBV (\Omega)$, and 
$|D(\varphi\circ u)|(\Omega)\le |Du|(\Omega)$. By the coarea formula applied to the $\SBV$  function $\varphi\circ u: \Omega\to\R$ we obtain
\begin{equation}
 \int_\R  \calH^{N-1}(\partial^*(\{\varphi\circ u<t\})) \dt
 = |D(\varphi\circ u)|(\Omega),
\end{equation}
therefore we can choose $r$ as in \eqref{eqchoicer}, with $c=2/r_0$ depending only on $\G$.

\emph{Step 3. Decomposition of the domain.}
Let $Z_1, \dots, Z_M\in \On$ be such that $\OnG\subset \bigcup_i B_{r_0/2}([Z_i])$
(this choice depends only on $\G$), and let $r_i$ be the corresponding values with the property \eqref{eqchoicer} (which depend also on $u$).
We define iteratively 
\[
\omega_1:= u^{-1}(B_{r_1}([Z_1])),\
\text{and}\ \omega_i:= u^{-1}(B_{r_i}([Z_i]))\setminus \bigcup_{j<i} \omega_j.
\]
The sets are a partition of $\Omega$,
and $\calH^{N-1}(\partial^*\omega_i)\le c |Du|(\Omega)$ for each $i$, with $c$ depending only on the group $\G$.

\emph{Step 4. Conclusion.}
We set $\beta:=\psi_{Z_i}\circ u$ in each $\omega_i$. Obviously $[\beta]=u$,
and inside each $\omega_i$ we have $|D\beta|\le C|Du|_{(d^2)}$ as measures.
We compute
\begin{equation}
 |D\beta|(\Omega)\le \sum_{i=1}^M |D\beta|(\omega_i)+ \sum_{i=1}^M\|\beta\|_{L^\infty} \calH^{N-1}(\partial^*\omega_i) \le C |Du|_{(d^2)}(\Omega),
\end{equation}
again with $C$ depending only on $\G$ and $d$.
\end{proof}

\begin{remark}\label{rem:0904251748}
	By a localization argument, it is simple to see that if $u\in \PC(\Omega;\On/\G)$, then the function $\beta$ whose existence is guaranteed by
	Theorem~\ref{theoliftingmanifold} satisfies $\beta \in \PC(\Omega; \On)$. Moreover, if any other $\beta_*$ with this property exists, then $\beta_*=G\beta$ for some $G\in \BV(\Omega;\G)$.
\end{remark}

\begin{remark}\label{ref:lifting}
An extension of the argument used in Theorem~\ref{theoliftingmanifold} to construct a lifting of a map in
 $\SBV(\Omega;\OnG)$  to a map in   $\SBVG(\Omega;\On)$ permits to show
 that if $\G\subset\SO(2)$ then any map in
 $\SBV(\Omega;\Mdue/\G)$ admits a representative in  $\SBV(\Omega;\Mdue)$.
 The treatment of the higher-dimensional case is more complex and outside of the scope of this paper. Therefore we postpone a more detailed analysis of the relation between the spaces
 $\SBV(\Omega;\Mno/\G)$ and $\SBV(\Omega;\Mno)$ to future work.
\end{remark}

\subsection{Metric-space valued phase field models}
 With the above functional framework  at hand,  we can formulate the approximating phase field model  in terms of order parameters valued into equivalence classes. The natural domain will be given  by Sobolev functions with values in $\Mno/\G$. We define
\begin{equation}\label{eq-tilde-F}
	\widetilde \F_\varepsilon(u,v;A):= 
	\int_{{A} } f_\varepsilon^2(v) \,|\nabla u|_{(d^2)}^2 \dx + \AT_\e(v;A) +\frac{1}{\delta_\varepsilon} \int_A \dG^2(u, \On/\G) \dx
\end{equation}
for  $A\subset\Omega$ open and
\[
(u,v)\in \ H^1(\Omega;\Mno/\G) \times  H^1(\Omega;[0,1]),
\] 
and equal to $+\infty$ otherwise in $L^1(\Omega;\Mno/\G)\times  L^1(\Omega)$.

By Remark \ref{r:sbvgvsh1}, any $\beta \in \SBVG(\Omega; \Mno)$ is such that $[\beta]\in H^1(\Omega;\Mno/\G)$ with $|\nabla[\beta]|_{(d^2)} \leq |\nabla \beta|$, and obviously $\dG([\beta], \On/\G) = \dist(\beta, \On)$. It follows that
\begin{equation}\label{0904251306}
	\tF_\varepsilon([\beta],v;A)\leq \F_\varepsilon(\beta,v;A).
\end{equation}
Finally in view of the invariance \eqref{eq-limit-classi} and Remark \ref{rem:0904251748} we can define 
\begin{equation}\label{1304251245}
	\tF^\lambda(u,1):=\F^\lambda(\beta,1) \quad\text{for every } u \in \PC(\Omega; {\On}/\G)
\end{equation}
and $\tF^\lambda(u,v):=+\infty$ otherwise in $L^1(\Omega; {\Mno}/\G)\times L^1(\Omega)$ where, for any $u \in \PC(\Omega; \OnG)$, $\beta \in \PC(\Omega; \On)$ is a lifting of $u$ as in Theorem~\ref{theoliftingmanifold}. The invariance \eqref{eq-limit-classi} guarantees that the result does not depend on the choice of the lifting.

In this general framework we can now state the counterpart of Theorem \ref{thm:main}.

\begin{theorem}\label{thm:maineq}
	The following properties hold:
	\begin{itemize}
		\item[(C)'] For any $(u_\varepsilon, v_\varepsilon)_\varepsilon \subset L^1(\Omega;\Mno/\G \times [0,1])$ satisfying
		\begin{equation}\label{compesteq}
			\limsup_{\varepsilon\to 0} \tF_\varepsilon(u_\varepsilon, v_\varepsilon)<+\infty
		\end{equation}
		there exists $u \in \PC(\Omega; \On/\G)$ such that, up to a subsequence,
		\begin{equation}\label{compeq}
			u_\varepsilon \to u \text{ in }L^1(\Omega; \Mno/\G),\qquad v_\varepsilon \to 1 \text{ in }L^1(\Omega).
		\end{equation}
\item[(LB)'] For any $(u_\varepsilon, v_\varepsilon)_\varepsilon
\subset L^1(\Omega;\Mno/\G\times [0,1])$ and $u \in \PC(\Omega; \On/\G)$ satisfying \eqref{compeq}, it holds
		\begin{equation*}\label{liminfeq}
			\tF^\lambda(u,1)\leq \liminf_{\varepsilon \to 0} \tF_\varepsilon(u_\varepsilon, v_\varepsilon).
		\end{equation*}
\item[(UB)'] For any $u \in \PC(\Omega; \On/\G)$ there exists $(u_\varepsilon, v_\varepsilon)_\varepsilon \subset L^1(\Omega;\Mno/\G) \times L^1(\Omega;[0,1])$ satisfying \eqref{compeq} and
		\begin{equation}\label{limsupeq}
			\tF^\lambda(u,1)\geq \limsup_{\varepsilon \to 0} \tF_\varepsilon(u_\varepsilon, v_\varepsilon).
		\end{equation}
	\end{itemize}
\end{theorem}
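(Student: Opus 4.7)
The plan is to reduce Theorem~\ref{thm:maineq} to Theorem~\ref{thm:main} using the lifting Theorem~\ref{theoliftingmanifold}, the one-dimensional equivalence of Lemma~\ref{l:clean}, and the abstract compactness and slicing results Theorem~\ref{t:BVXcomp} and Proposition~\ref{prop-slicing}. The three statements play asymmetric roles: (UB)$'$ is almost immediate from (UB) thanks to the inequality \eqref{0904251306}; (LB)$'$ reduces to (LB) by one-dimensional slicing; and (C)$'$ is the genuinely abstract statement that requires the most work.

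For (UB)$'$, given $u\in \PC(\Omega;\On/\G)$, Theorem~\ref{theoliftingmanifold} together with Remark~\ref{rem:0904251748} provides a lifting $\beta\in \PC(\Omega;\On)$ with $[\beta]=u$ a.e.\ in $\Omega$. Applying Theorem~\ref{thm:main}(UB) to $\beta$ yields $(\beta_\varepsilon,v_\varepsilon)$ satisfying \eqref{comp} and \eqref{limsupthm}. Setting $u_\varepsilon:=[\beta_\varepsilon]$, Remark~\ref{rem:1104251642} gives $u_\varepsilon\to u$ in $L^1(\Omega;\Mno/\G)$, while the inequality \eqref{0904251306} combined with definition \eqref{1304251245} yields $\limsup_{\varepsilon\to 0}\tF_\varepsilon(u_\varepsilon,v_\varepsilon)\leq \F^\lambda(\beta,1)=\tF^\lambda(u,1)$, closing the upper bound.

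For (C)$'$, I would mimic the proof of Theorem~\ref{thm:main}(C), replacing classical $\BV$ objects by their metric-valued analogues. The $\AT_\varepsilon$ term delivers $v_\varepsilon\to 1$ in $L^1(\Omega)$ and, via Modica--Mortola estimates, controls the perimeter of $\{v_\varepsilon\leq \alpha\}$ for every fixed $\alpha\in(0,1)$. The cleanup strategy of \cite{CFI14} is then implemented intrinsically: one modifies $u_\varepsilon$ on the bad set (of bounded perimeter) by composing with Lipschitz maps $\varphi\in \Lip_1(\Mno/\G;\R^{d^2})$ and exploiting the definition \eqref{eq-sup-measures} of $|Du|_{(d^2)}$, producing an equibounded family in $\BV(\Omega;\Mno/\G)$. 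Theorem~\ref{t:BVXcomp} extracts a subsequence converging a.e.\ and in $L^1(\Omega;\Mno/\G)$ to some $u\in \BV(\Omega;\Mno/\G)$. The truncation $M_\varepsilon\to\infty$ in the damage coefficient forces $|\nabla u|_{(d^2)}=0$, so $u\in \SBV_0$, while the penalty $\delta_\varepsilon^{-1}\int\dG^2(u_\varepsilon,\On/\G)\dx$ together with $\delta_\varepsilon\to 0$ enforces $u(x)\in \On/\G$ a.e., giving $u\in \PC(\Omega;\On/\G)$.

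For (LB)$'$, a standard slicing reduction brings the problem back to dimension one. By Proposition~\ref{prop-slicing}, for each $\xi\in\SN$ and $\hN$-a.e.\ $y\in\Omega^\xi$ one has $|\nabla u_\varepsilon|\geq |((u_\varepsilon)\xy)'|$ and analogously for $v_\varepsilon$, so Fatou's lemma reduces the inequality to slicewise estimates. By Lemma~\ref{l:clean}(i), the sliced limit $u\xy$ admits a lifting $G_y\beta_y\in H^1(\Omega_y^\xi;\Mno)$ with $|(G_y\beta_y)'|=|(u\xy)'|$, and item (ii) together with $u\in \PC$ ensures $G_y\beta_y\in \PC(\Omega_y^\xi;\On)$. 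The one-dimensional slice version of (LB) from Theorem~\ref{thm:main} then applies to this lifting, and integration over $y\in \Omega^\xi$ combined with optimization over $\xi\in\SN$ in the cell-problem characterization of $g_\lambda$ yields (LB)$'$. The main obstacle throughout is the cleanup step in (C)$'$: carrying out the modification and BV bound without a linear structure on $\Mno/\G$, relying exclusively on Lipschitz compositions, the intrinsic distance $\dG$, and the supremum definition \eqref{eq-sup-measures}.
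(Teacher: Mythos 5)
Your overall architecture coincides with the paper's. (UB)$'$ is deduced exactly as you describe: lift $u$ to $\beta\in\PC(\Omega;\On)$ by Theorem~\ref{theoliftingmanifold}, apply (UB) of Theorem~\ref{thm:main}, pass to classes, and use $\tF_\e\le\F_\e$ from \eqref{0904251306} together with \eqref{1304251245}. (C)$'$ is likewise proved intrinsically in the metric $\BV$ framework, precisely as in Theorem~\ref{t:comp1}: truncation by the projection $\Phi$, coarea selection of bad sets $E_n^\rho$ of bounded perimeter and vanishing measure, replacement of $u_\varepsilon$ by $[\Id]$ there, compactness from Theorem~\ref{t:BVXcomp}, and identification of the limit in $\PC(\Omega;\On/\G)$ from the penalization term and the bound $\int_{\Omega\setminus E_n^\rho}|\nabla u_n|\dx\le C K^{1/2}/M_{\varepsilon_n}+K/m_\rho$ with $m_\rho\to\infty$; your description of this step is faithful.

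The one place where your sketch has a genuine gap is (LB)$'$. After blowing up at a jump point $x_0$ at scale $\rho_k$, the rescaled one-dimensional functionals are \emph{not} of the form $\tF_\varepsilon$ for any admissible choice of parameters: the prefactor $M_\varepsilon\wedge\sqrt{\varepsilon}f$ does not scale homogeneously, and the truncation level becomes $\widehat M_k=M_{\varepsilon_k}/\sqrt{\rho_k}$, while the cell density $g_\lambda$ contains no truncation at all. Hence there is no ``one-dimensional slice version of (LB)'' that one can simply apply to the lifted slices; one must prove a 1D lower bound for the truncated, rescaled energies. The paper does this by introducing the intermediate prefactors $h_j(s)=\frac{js}{1-s}\wedge f(s)$ and the densities $g_{j,\lambda}$, showing $g_\lambda\le\lim_j g_{j,\lambda}$ (Lemma~\ref{p:approxg}), relaxing the boundary conditions (Lemma~\ref{lemmalbGjlambdaboundaryvalues}), and establishing the 1D bound with the truncation present (Lemma~\ref{l:lb1d}) before letting $j\to\infty$. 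A second, smaller inaccuracy: no optimization over $\xi\in\SN$ occurs; the slicing direction is fixed to be the normal $\nu_u(x_0)$ furnished by the blow-up, and the density is recovered as the Radon--Nikod\'ym derivative of the weak$^*$ limit of the energy measures with respect to $\calH^{N-1}\LL J_u$. Your use of Lemma~\ref{l:clean} to lift the one-dimensional slices is, however, exactly the paper's device.
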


The proof of this result is obtained as a consequence of the analysis performed in the rest of the paper. Compactness (C') is stated and proved in Theorem \ref{t:comp1}. The lower bound (LB') is stated and proved in Theorem \ref{th-lb}. The upper bound is a consequence of the corresponding upper bound in Theorem \ref{thm:main}, proved in Section \ref{sec:proofmain}, Theorem \ref{th-upperbound}, and the fact that $\tF_\e\leq \F_\e$ as in \eqref{0904251306}.

\section{Compactness}\label{subsec:comp}

We start with a truncation argument, in order to restrict our attention to sequences equibounded in $L^\infty$. We denote by $\Phi(\xi)$ the orthogonal projection of a matrix $\xi\in\Mno$ on the closed ball  $\overline B_{\sqrt d}:=\{\eta\in\Mno\colon |\eta|\leq  \sqrt d\}$,
\begin{equation}\label{f:proj}
	\Phi(\xi):=\begin{cases}
		\xi\quad &\hbox{if } |\xi|\leq  \sqrt d,\\
		\displaystyle{\sqrt d \frac{\xi}{|\xi|}} \quad &\hbox{if } |\xi|> \sqrt d.
	\end{cases}
\end{equation}
Since $\Phi(G\xi)=G \Phi(\xi)$ for all $G \in \On$,
$\Phi$ can be identified with a map from $\Mno/\G$ to itself.

\begin{lemma}\label{l:linfty}
	Let $(u_\varepsilon)_\varepsilon \subset H^1(\Omega; \Mno/\G)$ be such that
	\begin{equation}\label{1104251323}
	\lim_{\e\to0}	\int_{{\Omega} } \dG^2(u_\varepsilon, \On/\G) \dx =0.
	\end{equation}
Then, for every 
	$(v_\varepsilon)_\varepsilon\subset L^1(\Omega; [0,1])$ we have that 
	\begin{equation}\label{F:trunk}
		\tF_{\e}({\Phi}(u_\e),v_\e)\leq \tF_{\e}(u_\e,v_\e), \qquad \lim_{\e \to 0} 	\|{\Phi}(u_\e)-u_\e\|_{L^1(\Omega; \Mno/\G)}=0.
	\end{equation}
A corresponding assertion holds for 
$\F_\e$ and 
any $(\beta_\eps)\subset \SBVG(\Omega;\Mno)$.
\end{lemma}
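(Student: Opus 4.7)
The plan is to reduce both assertions to two elementary properties of the projection $\Phi$: it is $1$-Lipschitz on $\Mno$ (being the nearest-point map onto a convex set), and it commutes with the action of $\On$, hence with $\G$. Consequently $\Phi$ descends to a $1$-Lipschitz self-map of $\Mno/\G$, and since every $R\in\On$ satisfies $|R|=\sqrt d$ (so $\On\subset\overline B_{\sqrt d}$), $\Phi$ fixes $\On/\G$ pointwise.

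For the energy inequality I would treat each term of $\tF_\e$ in turn. The Modica-Mortola term $\AT_\e(v_\e)$ does not involve $u_\e$ and is unchanged. For the Sobolev term, the strategy is to appeal directly to the defining supremum in Definition~\ref{def-sobolev-metric}: given $\varphi\in\Lip_1(\Mno/\G;\R^{d^2})$, the composition $\varphi\circ\Phi$ lies again in $\Lip_1(\Mno/\G;\R^{d^2})$, hence
\[
|\nabla(\varphi\circ\Phi\circ u_\e)|\le |\nabla u_\e|_{(d^2)} \quad\text{a.e.,}
\]
and taking the essential supremum over $\varphi$ yields $|\nabla\Phi(u_\e)|_{(d^2)}\le|\nabla u_\e|_{(d^2)}$ a.e. For the penalization, using that $\Phi$ is $1$-Lipschitz and fixes $\On/\G$, for every $[R]\in\On/\G$ one has
\[
\dG(\Phi(u_\e),[R])=\dG(\Phi(u_\e),\Phi([R]))\le\dG(u_\e,[R]),
\]
and the infimum over $[R]\in\On/\G$ gives $\dG(\Phi(u_\e),\On/\G)\le\dG(u_\e,\On/\G)$. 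Summing the three contributions proves the first inequality in \eqref{F:trunk}.

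For the $L^1$ convergence I would argue pointwise. Where $|u_\e(x)|\le\sqrt d$ one has $\Phi(u_\e(x))=u_\e(x)$; where $|u_\e(x)|>\sqrt d$, working with any representative $\xi$ of $u_\e(x)$,
\[
|\Phi(u_\e)-u_\e|=|u_\e|-\sqrt d\le \dG(u_\e,\On/\G),
\]
since $|\xi|\le |R|+|\xi-R|=\sqrt d+|\xi-R|$ for every $R\in\On$ and the right-hand side can be minimized over $\On$ (recall $GR\in\On$ for $G\in\G$). Combining with Cauchy-Schwarz and the assumption $\int_\Omega\dG^2(u_\e,\On/\G)\dx\to0$, the $L^1$ norm of $\Phi(u_\e)-u_\e$ tends to zero.

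For the parallel assertion on $\F_\e$ with $\beta_\e\in\SBVG(\Omega;\Mno)$, the same scheme applies through the $\SBV$ chain rule with the $1$-Lipschitz map $\Phi$: $\Phi\circ\beta_\e\in\SBV^2(\Omega;\Mno)$ with $|\nabla(\Phi\circ\beta_\e)|\le|\nabla\beta_\e|$ a.e., while the equivariance $\Phi(G\xi)=G\Phi(\xi)$ for $G\in\G$ guarantees that the one-sided traces still obey $(\Phi\circ\beta_\e)^+\in\G(\Phi\circ\beta_\e)^-$ on $J_{\beta_\e}\supseteq J_{\Phi\circ\beta_\e}$, so that $\Phi\circ\beta_\e\in\SBVG$; the penalization and $L^1$ arguments are verbatim. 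No step is really delicate; the only mildly subtle point is carrying out the gradient bound for $\tF_\e$ through the Reshetnyak-type supremum in Definition~\ref{def-sobolev-metric}, instead of attempting a chain rule directly on the quotient metric space.
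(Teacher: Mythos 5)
Your proposal is correct and follows essentially the same route as the paper: the $1$-Lipschitz property of the projection $\Phi$ (together with its $\G$-equivariance, hence its descent to the quotient) handles the gradient and penalization terms via Definition~\ref{def-sobolev-metric}, and the bound $|\Phi(\xi)-\xi|\le\dist(\xi,\overline B_{\sqrt d})\le\dist(\xi,\On)$ combined with Cauchy--Schwarz and \eqref{1104251323} gives the $L^1$ convergence. The paper's proof is just a terser version of the same argument; your extra details (the Reshetnyak supremum step and the trace condition for the $\SBVG$ case) are all sound.
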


\begin{proof}
It follows immediately from the definition of $\Phi$, and the fact that
		$\dist(\Phi(\xi), \On)\leq \dist(\xi, \On)$ for any matrix $\xi$.
	 Moreover, $\Phi$ is 1--Lipschitz, and hence, by  Definition~\ref{def-sobolev-metric} we deduce that
	$|\nabla {\Phi}(u_\e)| \leq |\nabla u_\e|$.
	Finally, 
	since $\On\subset \overline B_{\sqrt d}$ and $\Phi$ is a projection,
	\[
	|{\Phi}(\xi)-\xi|
	\le \dist(\xi, \overline B_{\sqrt d})
	\le \dist(\xi, \On).\qedhere
	\]
\end{proof}
We first prove compactness  in Theorem~\ref{thm:maineq} \EEE by using the theory of $\BV$ functions valued in metric spaces and the strategy from \cite{ConFocIur25}.

\begin{theorem}\label{t:comp1}
	For any $(u_\varepsilon, v_\varepsilon)_\varepsilon \subset L^1(\Omega;\Mno/\G \times [0,1])$ satisfying \eqref{compesteq}
	there exists $u \in \SBV_0(\Omega; \On/\G)$ such that, up to a subsequence,
	\begin{equation*}
		u_\varepsilon \to u \text{ in }L^1(\Omega; \Mno/\G),\qquad v_\varepsilon \to 1 \text{ in }L^1(\Omega).
	\end{equation*}
	In particular, (C)' in Theorem~\ref{thm:maineq} is true.
\end{theorem}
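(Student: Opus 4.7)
My plan is to adapt the Ambrosio--Tortorelli compactness argument of \cite{CFI14, ConFocIur25} to the metric-valued framework: combine a Modica--Mortola level-set cleaning of $u_\e$ with the abstract compactness Theorem~\ref{t:BVXcomp}, and then upgrade the limit to $\SBV_0$ by testing against Lipschitz functions.

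The preliminary reductions are immediate. By Lemma~\ref{l:linfty} I may assume $|u_\e|\le\sqrt d$ pointwise; the elementary bounds $\|1-v_\e\|_{L^2}^2\le 4\e \sup_\e\tF_\e(u_\e,v_\e)$ and $\int_\Omega \dG^2(u_\e,\On/\G)\,\dx\le \delta_\e \sup_\e\tF_\e(u_\e,v_\e)$ give $v_\e\to 1$ in $L^1(\Omega)$ and $\dG(u_\e,\On/\G)\to 0$ in $L^2(\Omega)$, so in particular any accumulation point of $(u_\e)$ takes values in $\On/\G$ almost everywhere.

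For the compactness of $u_\e$, the Modica--Mortola inequality $\int(1-v_\e)|\nabla v_\e|\,\dx\le C$ combined with the coarea formula applied to $(1-v_\e)^2/2\in W^{1,1}(\Omega)$ yields
\[
\int_0^1 (1-s)\,\hN(\{v_\e=s\})\,ds\le C.
\]
A pigeonhole argument on a carefully chosen, $\e$-dependent strip of levels close to $1$ produces a level $s_\e$ with $|\{v_\e<s_\e\}|\to 0$, $\hN(\{v_\e=s_\e\})\le C$, and $f_\e(s_\e)$ bounded below. Setting $\tilde u_\e:=u_\e$ on $\{v_\e\ge s_\e\}$ and $\tilde u_\e:=[\Id]$ otherwise defines a function in $\SBV(\Omega;\Mno/\G)$; Cauchy--Schwarz with $\int f_\e^2(v_\e)|\nabla u_\e|^2\,\dx\le C$ bounds the gradient contribution $\int_{\{v_\e>s_\e\}}|\nabla u_\e|\,\dx$, while the jump contribution is $\le \diam(\overline B_{\sqrt d}/\G)\,\hN(\{v_\e=s_\e\})$; hence $|D\tilde u_\e|_{(d^2)}(\Omega)\le C$. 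Theorem~\ref{t:BVXcomp} then gives a subsequence with $\tilde u_\e\to u$ a.e.\ and in $L^1$, where $u\in\BV(\Omega;\Mno/\G)$; since $|\{v_\e<s_\e\}|\to 0$ and everything is bounded, also $u_\e\to u$ in $L^1$, with $u\in\On/\G$ a.e.

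To upgrade to $u\in\SBV_0$, I test with arbitrary $\varphi\in\Lip_1(\Mno/\G;\R)$: the composition $w_\e^\varphi:=\varphi\circ u_\e\in H^1(\Omega)$ obeys $\int f_\e^2(v_\e)|\nabla w_\e^\varphi|^2\,\dx\le C$ by Definition~\ref{def-sobolev-metric}. For any fixed $c>0$ the set $A_\e^c:=\{f_\e(v_\e)\ge c\}$ exhausts $\Omega$ as $\e\to 0$ (using $v_\e\to 1$ together with the logarithmic singularity \eqref{0405230818} and $M_\e\to\infty$), and there $\|\nabla w_\e^\varphi\|_{L^2(A_\e^c)}^2\le C/c^2$. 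Passing to the liminf and invoking lower semicontinuity of the $L^2$ norm yields $\|\nabla(\varphi\circ u)\|_{L^2(\Omega)}\le C/c$; letting $c\to\infty$ gives $\nabla(\varphi\circ u)=0$ a.e.\ for every $\varphi\in\Lip_1$, and Remark~\ref{rem-SVB0-Sob} then delivers $u\in\SBV_0(\Omega;\On/\G)$. The main obstacle is the simultaneous control in the pigeonhole step: the weighted coarea bound creates a genuine tension between pushing $s_\e$ close enough to $1$ for $f_\e(s_\e)$ to stay bounded below and keeping $\hN(\{v_\e=s_\e\})$ uniformly bounded; its resolution is the technical heart of the proof and follows the fine construction in \cite{CFI14, ConFocIur25}, relying on the singular behaviour of $f$ at $1$ and on the $M_\e$-truncation.
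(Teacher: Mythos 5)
Your overall architecture (truncation via Lemma~\ref{l:linfty}, cleaning $u_\e$ on a sublevel set of $v_\e$, compactness from Theorem~\ref{t:BVXcomp}, then testing with Lipschitz maps and invoking Remark~\ref{rem-SVB0-Sob}) matches the paper's, but the key coercivity step does not close as you state it, and the ``tension'' you flag at the end is not resolvable in the form you propose. The three properties you require of the level $s_\e$ are mutually incompatible: $f_\e(s_\e)=M_\e\wedge\sqrt\e f(s_\e)\ge c>0$ forces, by \eqref{0405230818}, $1-s_\e\lesssim \sqrt\e|\log\e|$, and the weighted coarea bound $\int_0^1(1-s)\hN(\{v_\e=s\})\,ds\le C$ then yields by pigeonhole on a strip of that width only $\hN(\{v_\e=s_\e\})\lesssim \e^{-1}|\log\e|^{-2}$, which blows up; conversely, choosing $s_\e$ in a fixed strip $(1-\rho,1-\rho/2)$ gives a (now $\rho$-dependent) perimeter bound, but $f_\e(s_\e)\le\sqrt\e f(1-\rho/2)\to0$, so your Cauchy--Schwarz estimate for $\int_{\{v_\e>s_\e\}}|\nabla u_\e|\,\dx$ degenerates. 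The paper's mechanism at this point is genuinely different: it takes the level in the fixed strip and bounds the mass of $\nabla u_\e$ there by Young's inequality pairing the gradient term with the potential term,
\begin{equation*}
\e f^2(v_\e)|\nabla u_\e|^2+\frac{(1-v_\e)^2}{4\e}\ \ge\ (1-v_\e)f(v_\e)\,|\nabla u_\e|\ \ge\ m_\rho\,|\nabla u_\e|,
\qquad m_\rho:=\min_{[1-\rho,1)}(1-s)f(s)\to\infty,
\end{equation*}
reserving Cauchy--Schwarz for the residual set $\{f(v_\e)>M_\e\e^{-1/2}\}$ where the truncation is active and the prefactor equals $M_\e^2$.

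The same issue propagates to your $\SBV_0$ upgrade. An $L^2$ bound on $\nabla(\varphi\circ u_\e)$ over sets $A_\e^c$ that merely exhaust $\Omega$ in measure does not pass to the absolutely continuous part of $D(\varphi\circ u)$: without a uniform perimeter bound on $\Omega\setminus A_\e^c$, a staircase-type sequence with all of its gradient concentrated in the exceptional sets converges in $L^1$ to a function with nonzero $\nabla$, so the ``lower semicontinuity of the $L^2$ norm'' you invoke is not available; moreover your argument says nothing about the Cantor part, which must also be excluded. The paper instead uses the exceptional sets $E_n^\rho$ with $\Per(E_n^\rho,\Omega)\le C_\rho K$ together with the $L^1$ estimate $\int_{\Omega\setminus E_n^\rho}|\nabla u_n|\,\dx\le K/m_\rho+o(1)$, and a localization/slicing argument to obtain $|D^c(\varphi\circ u)|(\Omega)+\|\nabla(\varphi\circ u)\|_{L^1(\Omega)}\le K/m_\rho$ for every $\rho$; letting $\rho\to0$ (so $m_\rho\to\infty$) gives $u\in\SBV_0$. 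So the skeleton of your proof is right, but the quantitative heart — which estimate controls $\nabla u_\e$ away from the damaged region, and how that estimate survives in the limit — needs to be replaced by the Young-inequality/$m_\rho$ argument.
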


\begin{proof}
Let us fix a vanishing sequence $(\varepsilon_n)_n$ and denote  $\tF_n\equiv\tF_{\varepsilon_n}$ \EEE for every $n \in \N$.
	Let $(u_n, v_n)_n \subset L^1(\Omega;  \Mno/\G \EEE \times [0,1])$ be such that
\begin{equation}\label{f:enest}
    \sup_n \tF_n(u_n, v_n) \leq  K.
\end{equation}
	The convergence of $v_n$ to $1$  directly \EEE follows from the estimate 
\begin{equation}\label{eq1mvnepsto0}
\int_\Omega\frac{(1-v_n)^2}{4\eps_n}\dx\leq K.
\end{equation}
	Moreover,   $u_n \in H^1(\Omega; \Mno/\G)$ \EEE by \eqref{f:enest}, and,
	by Lemma \ref{l:linfty}, we may assume
	that 
	\begin{equation}\label{f:lixc}
\|u_n\|_{L^\infty}\le\sqrt d \qquad \text{for every } n\in\N.
	\end{equation}
We follow the strategy from \cite[Prop.~4.11]{ConFocIur25}.
As a first step, we need to identify small subsets of $\Omega$, with vanishing measure and bounded perimeter, where 
the the energy bound cannot provide any uniform bound of $|\nabla u_n|$. More precisely, fixed $\rho\in (0,1/2)$, for every $n$ we can choose $t_n^\rho\in (1-\rho, 1-\rho/2)$ such that
\begin{equation}\label{eqcptpropenrho}
	E_n^\rho:= \{v_n<\lambda_n^\rho\}\hskip2mm
	\text{ obeys }\hskip2mm
	\Per(E_n^\rho, \Omega)\le  C_\rho K
	\hskip2mm\text{ and }\hskip2mm
	\LN(E_n^\rho)\le C_\rho \eps_n K.
\end{equation}
Namely, for
$\phi(t):=(1-t)^2/2$, by the coarea formula, Young's inequality and \eqref{f:enest}, we get
\begin{equation}\label{eqestphit}
\int_\R \Per(\{\phi(v_n)>t\}, \Omega) \dt
=\int_\Omega |\nabla(\phi(v_n))| \dx \leq  \int_\Omega\left(\frac{(1-v_n)^2}{4\eps_n}+\eps_n|\nabla v_n|^2 \right) \dx \leq K,
\end{equation}
and it is enough to choose $\lambda^\rho_n=\phi^{-1}(t^\rho_n)$, with $t^\rho_n \in (\phi(1-\rho/2), \phi(1-\rho))$ such that
\[
\left(\phi(1-\rho)-\phi(1-\rho/2)\right) \Per(\{\phi(v_n)>t^\rho_n\}, \Omega) \leq  \int_{\phi(1-\rho/2)}^{\phi(1-\rho) } \Per(\{\phi(v_n)>t\}, \Omega) \dt.
\]
The second estimate in \eqref{eqcptpropenrho} then follows from the fact that
\[
\LN(\{v_n<\lambda_n\})\frac{(1-\lambda_n)^2}{4\varepsilon_n} \leq \int_{\{v_n<\lambda_n\}} \frac{(1-v_n)^2}{4\eps_n}  \dx
\leq K. 
\]
Now we use again \eqref{f:enest}  to get a uniform estimate for $u_n$ in $\Omega \setminus E_n^\rho$,  exploiting the fact that $v_n \geq 1-\rho$ in this set. Namely, setting $B_n:=\{ f(v_n)>M_{\varepsilon_n} \varepsilon_n^{-\frac12} \}$, we have
\[\begin{split}
K\geq &  \tF_n(u_n, v_n; \Omega\setminus E^\rho_n) \geq M_{\varepsilon_n}^2 \int_{B_n\setminus E_n^\rho}    |\nabla u_n|^2 \dx  \\  + &
\int_{\Omega \setminus B_n\setminus E_n^\rho }  \left( \e_nf^2(v_n) |\nabla u_n|^2 \dx  +
\frac{(1-v_n)^2}{4\e_n}  \right)\dx \\ &
\geq M_{\varepsilon_n}^2 \int_{B_n\setminus E_n^\rho}  |\nabla u_n|^2 \dx +
\int_{\Omega \setminus B_n\setminus E_n^\rho }   f(v_n) (1-v_n) |\nabla u_n| \dx, 
\end{split} \] 
so that 
\begin{equation}\label{f:stEnc}
\int_{{\Omega \setminus E_n^\rho } }
|\nabla u_n| \dx\leq \frac{(K\calL^N(\Omega))^{\frac 12}}{M_{\varepsilon_n}}+\frac{K}{m_\rho},
\end{equation}
where $0<m_\rho:=\min_{s\in [1-\rho,1)}(1-s)f(s)$
obeys $\lim_{\rho\to0} m_\rho=\infty$. 
 In conclusion, setting
	\begin{equation}\label{01203242202}
		\ol u_n^\rho:=u_n \chi_{\Omega\sm E_n^\rho} +[\Id]\,  \chi_{E_n^\rho} ,
	\end{equation}
	we have $\ol u_n^\rho \in \SBV(\Omega;\Mno/\G)$ with
	\begin{equation}\label{02401241158}
		J_{\ol u_n^\rho}\subset  \partial^* E_n^\rho, \qquad  |\nabla \ol u_n^\rho| = |\nabla u_n| \chi_{\Omega\sm E_n^\rho}= |\nabla (u_n \chi_{\Omega\sm E_n^\rho})|.
	\end{equation}
By \eqref{f:lixc},  \eqref{eqcptpropenrho},
\eqref{02401241158},
\eqref{f:stEnc} and Theorem~\ref{t:BVXcomp} we
obtain that there exist a function  $u^\rho \in \BV(\Omega;\Mno/\G)$ and a subsequence $(\ol u_{n_k}^\rho)_k$ converging to $u^\rho$ in $L^1(\Omega;  \Mno  /\G)$.
Since \eqref{f:enest} implies \eqref{1104251323}, we obtain that $u^\rho$ takes values in $\On/\G$ almost everywhere.

Moreover, since $\LN(E_n^\rho)\le C_\rho \eps_n K\to0$ and $\|u_n\|_{L^\infty}\le \sqrt d$,
we have $\|u_n-\ol u^\rho_n\|_{L^1}\to0$, therefore also the subsequence
$(u_{n_k})_k$ converges to the same $u^\rho$. In particular,
taking a diagonal subsequence we can ensure that the limit does not depend on $\rho$, we denote it by $u$.
	
Let $\varphi\in \Lip_1(\Mno/\G)$. Then for any $\rho$ and $n$ we have
$\varphi\circ \ol u^\rho_n\in BV(\Omega)$;
from \eqref{eqcptpropenrho} and
\eqref{02401241158} we obtain
$\sup_n \calH^{N-1}(J_{\varphi\circ \ol u^\rho_n})<\infty$. Therefore, recalling \eqref{f:stEnc}, and exploiting a standard localization argument, e.g. arguing by slicing to reduce to a 1-dimensional domain where the approximating functions have the same (finite) number of jump points, we get
\begin{equation}\label{eqcptsbv0}
	\begin{split}
|D^c(\varphi\circ u)|(\Omega)
+ \|\nabla (\varphi\circ u)\|_{L^1(\Omega)} &
\le \liminf_{k\to\infty} \|\nabla (\varphi\circ \ol u_{n_k}^\rho)\|_{L^1(\Omega)} \\ &
\le \liminf_{k\to\infty} \|\nabla u_{n_k}\|_{L^1(\Omega\setminus E_n^\rho)}
\le \frac{K}{m_\rho}.
\end{split}
\end{equation}
We recall that $\lim_{\rho\to0} m_\rho=\infty$.
Since the left-hand side does not depend on $\rho$, it is zero, and we obtain $\varphi\circ u\in \SBV_0$. By
Remark~\ref{rem-SVB0-Sob}, $u\in \SBV_0(\Omega;\On/\G)$.
\end{proof}

We remark that the compactness in Theorem~\ref{thm:main} is a direct consequence of Theorem~\ref{t:comp1}, owing to the properties in Remark~\ref{r:sbvgvsh1} and
Theorem~\ref{theoliftingmanifold} on the lifting of $\BV$ functions with values in $\On/\G$, applied to any limiting field $u$.

Nevertheless, we provide a different proof of Theorem~\ref{thm:main} which does not make use of the theory of $\BV$ functions valued in metric spaces.
We exploit the fact that we may apply
Theorem~\ref{theoliftingmanifold} to functions valued in $\On/\G$, target set of the limits.
The crucial step is to modify the original sequence in such a way that the modifications are valued into $\On/\G$ and
still have uniformly bounded energy.
By this approach we prove weak$^*$ convergence in $\BV$, up to a small perturbation.

\begin{theorem}\label{t:comp2}
	Let $(\beta_\varepsilon, v_\varepsilon)_\varepsilon \subset L^1(\Omega;\Mno \times[0,1])$ satisfy
	\eqref{compest}. Then
	there exists $\beta \in \SBV_0(\Omega; \On)$, a subsequence $({\e_n})_n$ and  $G_n\in L^\infty(\Omega;\G)$ such that
	$G_n\beta_{\e_n}$  belong to $\SBVG(\Omega; \Mno)$ and converge to $\beta$ in $L^1(\Omega;\Mno)$,
	and $v_{\e_n}\to1$ in $L^2(\Omega)$.
	In particular, (C) in Theorem~\ref{thm:main} is true.
\end{theorem}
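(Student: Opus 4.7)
The plan is to obtain $\beta$ by applying the metric-space compactness Theorem~\ref{t:comp1} to the equivalence classes $[\beta_\e]$ and lifting via Theorem~\ref{theoliftingmanifold}, and to construct the multipliers $G_n$ by mimicking, on the approximating fields $[\beta_{\e_n}]$, the coarea-based partition of $\Omega$ used in the proof of the lifting theorem. Specifically, by \eqref{0904251306} the sequence $u_\e := [\beta_\e]$ satisfies the hypotheses of Theorem~\ref{t:comp1}, which yields a subsequence $(\e_n)_n$ and a limit $u \in \SBV_0(\Omega; \OnG)$ with $[\beta_{\e_n}] \to u$ in $L^1(\Omega; \Mno/\G)$ and $v_{\e_n} \to 1$ in $L^1(\Omega)$; the convergence of $v_{\e_n}$ in $L^2$ follows from the uniform bound $\int(1-v_{\e_n})^2 \dx \leq 4\e_n K \to 0$ implicit in \eqref{compest}, together with $v_{\e_n} \in [0,1]$. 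Applying Theorem~\ref{theoliftingmanifold} and Remark~\ref{rem:0904251748} to $u$ we obtain $\beta \in \SBV_0(\Omega; \On)$ with $u = [\beta]$.

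For the multipliers, I repeat the localization argument from the proof of Theorem~\ref{theoliftingmanifold}, now applied to $[\beta_{\e_n}]$. Fix a covering $\OnG \subset \bigcup_{i=1}^M B_{r_0/2}([Z_i])$ as in that proof, with $r_0$ depending only on $\G$ and with isometric inverses $\psi_{Z_i}:B_{r_0}([Z_i]) \to B_{r_0}(Z_i)\subset \On$. For $\rho > 0$ fixed small, let $E_n^\rho = \{v_{\e_n} < \lambda_n^\rho\}$ be the set from the proof of Theorem~\ref{t:comp1} with $\Per(E_n^\rho,\Omega)\leq C_\rho K$ and $|E_n^\rho| \to 0$, cf.~\eqref{eqcptpropenrho}. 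Since by \eqref{f:stEnc} the norm $\|\nabla[\beta_{\e_n}]\|_{L^1(\Omega\setminus E_n^\rho)}$ is bounded uniformly in $n$, the coarea step selects radii $r_{i,n} \in (r_0/2, r_0)$ (and, up to a further subsequence, limits $r_i^*$) giving a partition $\{\omega_i^n\}_{i=1}^M$ of $\Omega\setminus E_n^\rho$ with $\omega_i^n \subset ([\beta_{\e_n}])^{-1}(B_{r_{i,n}}([Z_i]))$ and $\Per(\omega_i^n,\Omega\setminus E_n^\rho)$ uniformly bounded, with analogous pieces $\omega_i^*$ used in the lifting of $u$. Define $G_n := \Id$ on $E_n^\rho$; on each $\omega_i^n$ let $G_n(x)$ be the unique element of $\G$ with $G_n(x)\beta_{\e_n}(x) \in B_{r_0}(Z_i)$, so that $G_n \beta_{\e_n} = \psi_{Z_i} \circ [\beta_{\e_n}]$ on $\omega_i^n$.

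Two properties remain. First, $G_n\beta_{\e_n} \in \SBVG$: on each $\omega_i^n$, $G_n \beta_{\e_n} = \psi_{Z_i}([\beta_{\e_n}])$ is in $H^1$ (since $\psi_{Z_i}$ is $1$-Lipschitz and $[\beta_{\e_n}] \in H^1$) and thus has no interior jump; the residual jump set of $G_n\beta_{\e_n}$ lies in $\partial^* E_n^\rho \cup \bigcup_i \partial^*\omega_i^n \cup J_{\beta_{\e_n}}$, of finite $\hN$-measure, and at each interface across which $[\beta_{\e_n}]$ is continuous the two traces of $G_n\beta_{\e_n}$ represent the same class in $\Mno/\G$ and hence differ by an element of $\G$, while inside $E_n^\rho$ we have $G_n\beta_{\e_n} = \beta_{\e_n}$ with jumps in $\G$ by assumption. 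Second, $G_n\beta_{\e_n} \to \beta$ in $L^1(\Omega;\Mno)$: on each $\omega_i^n \cap \omega_i^*$, $G_n\beta_{\e_n} = \psi_{Z_i}([\beta_{\e_n}])$ converges in $L^1$ to $\psi_{Z_i}(u) = \beta$ by the $L^1$-convergence $[\beta_{\e_n}] \to u$ and the $1$-Lipschitz property of $\psi_{Z_i}$; the remaining contributions vanish because $|E_n^\rho|\to 0$, $|\omega_i^n \triangle \omega_i^*| \to 0$, and $|\beta_{\e_n}|,|\beta|\leq \sqrt d$ by Lemma~\ref{l:linfty}. The principal obstacle is precisely the tension between these two requirements: a naive pointwise choice $G_n(x) = \arg\min_{G\in\G}|G\beta_{\e_n}(x)-\beta(x)|$ would secure the $L^1$-alignment with $\beta$ but force $G_n\beta_{\e_n}$ to inherit the jumps of $\beta$, which in general do not lie in $\G$ and would violate the $\SBVG$ condition; the localized construction above instead references the fixed matrices $Z_i$ in place of $\beta$, resolving the tension at the cost of the perimeter bookkeeping for the auxiliary partition.
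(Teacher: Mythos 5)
Your overall architecture (get the limit from Theorem~\ref{t:comp1}, lift it by Theorem~\ref{theoliftingmanifold}, and build the multipliers $G_n$ on the approximating fields by a coarea-localization) is reasonable, but the way you build $G_n$ has a genuine gap: you transplant the localization of Theorem~\ref{theoliftingmanifold}, which is tailored to maps valued in the \emph{compact manifold} $\OnG$, onto $[\beta_{\e_n}]$, which is valued in $\Mno/\G$. Two things break. First, coverage: since $\dG([\xi],[Z_i])\ge \dist(\xi,\On)$, any point $x$ with $\dist(\beta_{\e_n}(x),\On)\ge r_0$ lies in none of the sets $([\beta_{\e_n}])^{-1}(B_{r_{i,n}}([Z_i]))$, so the $\omega_i^n$ do \emph{not} partition $\Omega\setminus E_n^\rho$. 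The penalization term only gives that this uncovered region has measure $O(\delta_{\e_n})$; it gives no perimeter bound, so you cannot simply absorb it into the exceptional set. Second, the local sections: $\psi_{Z_i}$ is an isometric inverse only on $B_{r_0}([Z_i])\subset\OnG$ — the proof of Step~1 of Theorem~\ref{theoliftingmanifold} uses $|(G-\Id)Y|=|G-\Id|$ for $Y\in\On$. For general $\xi\in\Mno$ the element $G$ with $G\xi$ near $Z_i$ need be neither unique nor existent (take $\xi$ near $0$, where the quotient map $\Mno\to\Mno/\G$ is not a covering), so "the unique element of $\G$ with $G_n(x)\beta_{\e_n}(x)\in B_{r_0}(Z_i)$" is not well defined, and "$\psi_{Z_i}\circ[\beta_{\e_n}]\in H^1$" is not available since $\psi_{Z_i}$ has no Lipschitz (or even continuous) single-valued extension across the singular set of the group action.

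This is precisely the difficulty the paper's proof is organized around, and it resolves it differently: it forms the $\G$-invariant fields $h_n=\beta_n^T\beta_n$ and $w_n=|\det\beta_n|$, which lie in $W^{1,1}$ because jumps in $\G$ cancel, excises by coarea a set $\{|\det\beta_n|<\lambda_n'\}$ of \emph{bounded perimeter} and small measure (smallness coming from the penalization), and on the complement passes to the polar factor $k_n=\beta_n h_n^{-1/2}\in\On$; only then is the lifting Theorem~\ref{theoliftingmanifold} applied — to the genuinely $\OnG$-valued maps $[k_n]$ — to produce $G_n$, after which $G_n\beta_n=G_nk_nh_n^{1/2}$ is controlled in $\BV$. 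Your argument could likely be repaired along the same lines (e.g., a coarea selection on the $\G$-invariant $H^1$ function $\dist(\beta_{\e_n},\On)$ to excise, with bounded relative perimeter, the region far from $\On$, after which a quantitative version of Step~1 holds for matrices close to $\On$), but as written the construction of $G_n$ does not go through. The remaining points you gloss over — choosing the limit radii $r_i^*$ so that $\{\dG(u,[Z_i])=r_i^*\}$ is Lebesgue-null to get $|\omega_i^n\triangle\omega_i^*|\to0$, and making the lifting of $u$ consistent with the same partition — are fixable bookkeeping, not the core issue.
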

\begin{proof}
	By Lemma \ref{l:linfty} we may assume that  
	\begin{equation}\label{2904251925}
		|\beta_\varepsilon|\leq \sqrt{d} \quad \text{a.e.\ in }\Omega.
	\end{equation}
	Up to passing to a subsequence $(\beta_{\varepsilon_n})_n\equiv (\beta_n)_n$, by \eqref{compest} we get
	\begin{equation}\label{0305251910}
		\sup_n \F_{\eps_n}(\beta_n, v_n)\leq K.
	\end{equation}
	We deal with $(v_n)_n$ as in the proof of Theorem~\ref{t:comp1}; in particular, $v_n\to1$ a.e.\ in $\Omega$ and 
	for any $\rho\in (0,1/2)$
	there are suitable $t_n^\rho \in (1-\rho,1-\rho/2)$ such that $E_n^\rho:=\{v_n <t_n^\rho\}$ satisfies for every $n\in \N$ the properties \eqref{eqcptpropenrho}, and 
the same computation as for \eqref{f:stEnc}
leads to
	\begin{equation}\label{02401241218'}
	\int_{\Omega\sm E_n^\rho}|\nabla \beta_n| \dx 
	\leq \frac{(K\calL^N(\Omega))^{\frac 12}}{M_{\varepsilon_n}}+\frac{K}{m_\rho}.
\end{equation}
The next part of the argument differs from the one in Theorem~\ref{t:comp1}. We
consider the functions 
\begin{equation}\label{2401241151}
    h_n(x):= \beta_n^T(x) \beta_n(x), \qquad n\in\N.
\end{equation}
We have that $h_n \in \SBV(\Omega;\Mns)$
with $\|h_n\|_{L^\infty}\le d$, $\Mns$ denoting the set of $d\times d$ symmetric matrices, and $J_{h_n} \subset J_{\beta_n}$. Moreover,
if $x\in J_{h_n}$, and $G\in\G$ is such that $\beta^+_n(x)=G\beta^-_n(x)$, then 
\[
h_n^+(x)= \left(\beta_n^+(x)\right)^T\beta_n^+(x)= \left(G\beta_n^-(x)\right)^TG\beta_n^-(x)=  \left(\beta_n^-(x)\right)^T\beta_n^-(x)
=h_n^-(x),
\]
therefore $h_n\in W^{1,1}(\Omega;\Mns)$,
with $|\nabla h_n|\le C |\nabla \beta_n|$ a.e.\ in $\Omega$.
	
	A similar computation shows that 
	\[
	w_n:=|\det \beta_n| \in W^{1,1}(\Omega),
	\]
	with $|\nabla w_n|\le C |\nabla\beta_n|$ a.e.\ in $\Omega$.
	By the Coarea Formula, for every $n\in \N$ there exists $\lambda'_n\in (\frac14,\frac12)$ such that
	\begin{equation}\label{2401241247}
		\Per(\{ |\det \beta_n| < \lambda'_n\}, \Omega ) \leq C.
	\end{equation}
	Moreover, since $ \lambda'_n \le 1/2$, and
	$\min\{\dist(A,\On): |\det A|\le1/2\}>0$,
%
%
% 	$(s-1)^2 \ge 1/4$ for $s\le 1/2$, we have
	\[
	\LN\big( \{ |\det \beta_n| < \lambda'_n\}  \big)
% 	\leq \int_{\{ |\det \beta_n| < \lambda'_n\} } ||\det\beta_n|-1|^2 \dx
\leq C \int_\Omega \dist^2(\beta_n, \On) \dx \leq CK \delta_{\varepsilon_n}.
	\]
	Hence, the sets $F_n^\rho:= E_n^\rho \cup \{ |\det \beta_n| < \lambda'_n\}$
obey
\begin{equation}\label{eqcptpropenrhoF}
	\Per(F_n^\rho, \Omega)\le  C+C_\rho K
	\hskip2mm\text{ and }\hskip2mm
\LN(F_n^\rho)\le C_\rho \eps_n K+
CK \delta_{\varepsilon_n}
\end{equation}
and the functions
	\begin{equation}\label{2401241351}
		\ol h_n:= h_n \chi_{\Omega \sm F_n^\rho} + \Id\,  \chi_{F_n^\rho}
	\end{equation}
	satisfy $\ol{h}_n \in \SBV(\Omega; \Mno_\sym)$  with
	\begin{equation}\label{2401241353}
		|D\ol{h}_n|(\Omega)+ \|\ol{h}_n\|_{L^\infty(\Omega; \Mno)}\leq C, \qquad \det(\ol{h}_n)\geq \frac{1}{16} \text{ a.e.\ in}\ \Omega.
	\end{equation}
The maps $A\mapsto A^2$ and $A\mapsto A^{-1}$ are smooth bijections of $\Mnsp$,
i.e., the set of strictly positive-definite matrices in $\Mns$, onto itself  with smooth inverse, and in particular their inverses $A\mapsto A^{1/2}$ and $A\mapsto A^{-1/2}$ are smooth, and then Lipschitz on compact sets.
Therefore  the functions $(\ol{h}_n)^{-\frac{1}{2}}$, defined by $(\ol{h}_n)^{-\frac{1}{2}}(x):=\left((\ol{h}_n(x)^{-1})\right)^{\frac{1}{2}}$, are  in $\SBV(\Omega; \Mns)$,  and
	\begin{equation}\label{2801241031}
		J_{(\ol{h}_n)^{-\frac{1}{2}}}=J_{\ol{h}_n}\subset \partial^* F_n^\rho, \qquad
		|\nabla \big((\ol h_n)^{\frac{1}{2}}\big)| +|\nabla \big((\ol h_n)^{-\frac{1}{2}}\big)| \leq C |\nabla \ol h_n| \quad\text{a.e.\ in }\Omega\sm F_n^\rho.
	\end{equation}
	In particular, the functions $k_n\colon \Omega \to \On$ defined by
	\begin{equation}\label{2602241346}
		k_n:=\beta_n(h_n)^{-\frac{1}{2}}\chi_{\Omega\sm F_n^\rho}+ \Id \chi_{F_n^\rho},
	\end{equation} 
by \eqref{2401241353}, \eqref{2801241031} satisfy
	\begin{equation*}\label{2801241258}
		|\nabla  k_n|\leq C |\nabla  \beta_n| \quad\text{a.e.\ in }\Omega.
	\end{equation*}
 From  \eqref{02401241218'}  and since $k_n=\mathrm{Id}$ in $F_n^\rho$ (so in particular in $E_n^\rho\subset F_n^\rho$),
we deduce
that $k_n\in \SBV(\Omega; \On)$ with
	\begin{equation}\label{2801241247}
\int_\Omega	|\nabla k_n | \dx \leq
\frac{C K^{\frac 12}}{M_{\varepsilon_n}}+\frac{C K}{m_\rho},
\qquad J_{ k_n}\subset J_{\beta_n}\cup \partial^* F_n^\rho, \qquad
k_n^+\in \G  k_n^- \text{ on }J_{\beta_n}
\setminus F_n^\rho.
	\end{equation}
	Passing to the equivalence classes on the image, the functions defined by $[k_n](x):=[k_n(x)]$ for $x\in\Omega$ belong to $\SBV(\Omega; \On/\G)$, and
	\[
|D([k_n])|_{(d^2)}(\Omega)\leq \int_\Omega	|\nabla k_n | \dx+ \|k_n\|_\infty \Per(F_n^\rho)\leq C.
	\]
By Theorem~\ref{theoliftingmanifold}, for every $n\in\N$ we find $G_n\in L^\infty(\Omega;\G)$ such that
$G_n k_n \in \SBV(\Omega; \On)$, and
$|D(G_n k_n)|(\Omega)\leq C$, so that the functions
	\begin{equation}\label{2602241348}
		\widehat{\beta}^\rho_n:=G_n k_n h_n^{\frac{1}{2}} \chi_{\Omega\sm F_n^\rho}+\Id \chi_{F_n^\rho}=
		G_n \beta_n \chi_{\Omega\sm F_n^\rho}+\Id \chi_{F_n^\rho}
	\end{equation}
	belong to  $\SBV(\Omega; \Mno)$ with $\mathrm{det}\,\widehat{\beta}_n(x) \neq 0$ 
	for a.e.\ $x \in \Omega$,  and have equibounded $\BV$ norms.
	Hence 
	there exists a function $\beta^\rho\in \BV(\Omega; \Mno)$ such that, up to a subsequence,
	$(\widehat{\beta}_n) $ converge to $\beta^\rho$ in the weak* topology of $\BV(\Omega; \Mno)$.
Recalling \eqref{eqcptpropenrhoF}, we infer that
\begin{equation}\label{f: endc2}
		G_n \beta_n \chi_{\Omega\sm F_n^\rho}+\beta_n  \chi_{ F_n^\rho}\to \beta^\rho \quad\text{in }L^1(\Omega; \Mno),
	\end{equation}
	so that the functions $G_n$ as in the statement may be taken equal to $\Id$ in $F_n$. Notice that $G_n \beta_n \in \SBVG(\Omega; \Mno)$ by \eqref{2602241348} and \eqref{0305251910}. 
	Finally, since by \eqref{0305251910}
	\[
	\frac{1}{\delta_{\varepsilon_n}} \int_A \dist^2(\beta_n, \On) \dx \leq K,
	\]
	we obtain that  $\beta^\rho\in \BV(\Omega; \On)$.

	As in the final part of the proof of Theorem~\ref{t:comp1}, we can ensure by a diagonal argument that $\beta^\rho=:\beta$ does not depend on $\rho$, and that $\beta \in \SBV_0(\Omega;\On)$.
	We notice that one could obtain the fact that $\beta \in SBV_0(\Omega;\On)$ also arguing directly by slicing.
\end{proof}

\section{The surface energy density}\label{sec:surfen}

The limit energy is defined on functions with pure jump part, namely in $\PC$. In this section we give the definition of the surface energy density and the proof of its main properties. 

The surface energy density is expressed by a cell formula in one dimension. It depends on a parameter $\lambda \in [0,+\infty]$ determined by the asymptotic ratio between $\varepsilon$ and $\delta_\varepsilon$ in the definition of the approximating functionals, see~\eqref{1002241910} in  Section~\ref{sec:intro}.
In order to prove its main properties, such as the $s |\log s|$ growth for small opening (see Proposition~\ref{prop:proprietag}), it is convenient to recast the energy density through a 1d cell formula which is invariant by time reparametrization (Proposition~\ref{le:equivglambda}).

We conclude the section with a technical result, Lemma~\ref{lemmalambdainfty}, used for the proof of the upper bound when $\lambda=+\infty$.

%We start by introducing the interface energy density, expressed by a cell formula in 1d.

\smallskip

Let $I_T:=(-T,T)$ for $T>0$,
\[
F^T(\beta,v):= \int_{-T}^T \left( f^2(v) |\beta'|^2 + \frac{(1-v)^2}{4} + |v'|^2 \right)\dt, \qquad 
(\beta,v)\in H^1(I_T;\Mno\times [0,1]),
\] 
and
\[
F^T_\lambda(\beta,v):= F^T(\beta,v)+\lambda \int_{-T}^T\dist^2(\beta, \On) \dt \quad \text{for } \lambda\in [0,+\infty).
\]
For $ R^-$, $R^+\in \On$ we let
\begin{equation}\label{eqdefcalUT}
\calU^T[ R^-,R^+]:=  \{  ( \beta,v)  \in  H^1(I_T; \Mno \times [0,1]) \colon
		\beta(-T)= R^-,\, \beta(T)=R^+,\, v(\pm T)=1 \}
\end{equation}
be the set of admissible paths connecting $ R^-$ to $R^+$ in $I_T$, and with fixed boundary conditions, $v=1$, for the damage coefficient.
For $\lambda \in [0,+\infty)$ we set
\begin{equation}\label{glambda}
g^*_\lambda( R^-, R^+):=
		\lim_{T\uparrow +\infty}  \inf\left\{F^T_\lambda(\beta,v)
		\colon (\beta,v)\in \calU^T[ R^-,R^+] \right\},
\end{equation}
and then
\begin{equation}\label{eqdefgstar}
g^*_\infty( R^-, R^+):=\sup_{\lambda>0}
g^*_\lambda( R^-, R^+).
\end{equation}
One easily checks that the limit in \eqref{glambda} exists, and it equals the infimum over all $T>0$.  In order to recover the invariance under the action of the group we then define, for $\lambda\in[0,\infty]$,
\begin{equation}\label{glambdacar}
    g_\lambda( R^-, R^+):=	\min_{G\in\G}g_\lambda^*( R^-, GR^+).
\end{equation}
Since the group is finite, in the definition of $g_\infty$ the minimum over $\G$ and the supremum over $\lambda$ commute:
\begin{equation}\label{glambdainfty}
    g_\infty( R^-, R^+):=	\min_{G\in\G}
    g_\infty^*( R^-, GR^+)
    =\sup_{\lambda>0}
    g_\lambda( R^-, R^+).
\end{equation}

We first prove some basic properties of $g_\lambda$. 
\begin{proposition}\label{p:bpg}
For every $\lambda\in [0,+\infty]$,  $R,\,  R^-,\, R^+ \in \On$, $G\in \G$ it holds that 
\begin{enumerate}
	\item\label{p:bpg:0} $g_\lambda(R, GR)=0$;
	\item\label{p:bpg:G} $g_\lambda( R^-, R^+)=g_\lambda( R^-, GR^+)$;
	\item\label{p:bpg:Id} $g_\lambda( R^-, R^+)=g_\lambda( R^-(R^+)^T, \Id)$;
\item\label{p:bpg:sym} $g_\lambda( R^-, R^+)=g_\lambda(R^+, R^-)$;
\item\label{p:bpg:subadd} $g_\lambda( R^-,R^+)
    \le g_\lambda(R^-, R)+g_\lambda(R,R^+)$, and the same for $g^*$.
\end{enumerate}
\end{proposition}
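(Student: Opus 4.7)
The strategy is to first establish all five properties for the auxiliary quantity $g^*_\lambda$ (for $\lambda<+\infty$) by direct manipulations at the level of admissible paths, and then transfer them to $g_\lambda$ via the definition \eqref{glambdacar}. The case $\lambda=+\infty$ follows, since all properties are preserved by the supremum over $\lambda>0$ (using $\eqref{glambdainfty}$ and that the minimum over the finite group $\G$ commutes with $\sup_\lambda$).

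The backbone of the argument is the invariance of the energy $F^T_\lambda$ under two basic operations on paths $(\beta,v)\in\calU^T[R^-,R^+]$:
\textit{left multiplication,} $(\beta,v)\mapsto(H\beta,v)$ for $H\in\On$, which maps $\calU^T[R^-,R^+]$ isometrically onto $\calU^T[HR^-,HR^+]$ since $|H\beta'|=|\beta'|$ and $\dist(H\beta,\On)=\dist(\beta,\On)$; and \textit{right multiplication,} $(\beta,v)\mapsto(\beta M,v)$ for $M\in\On$, which maps $\calU^T[R^-,R^+]$ isometrically onto $\calU^T[R^-M,R^+M]$ by the same reason. In particular, for every $H,M\in\On$,
\begin{equation}\label{eq:pbpg-inv}
    g^*_\lambda(HR^-M,HR^+M)=g^*_\lambda(R^-,R^+).
\end{equation}

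From \eqref{eq:pbpg-inv} with $M=(R^+)^T$ we deduce $g^*_\lambda(R^-,GR^+)=g^*_\lambda(R^-(R^+)^T,G)$ for every $G\in\G$, so that
\[
    g_\lambda(R^-,R^+)=\min_{G\in\G}g^*_\lambda(R^-,GR^+)=\min_{G\in\G}g^*_\lambda(R^-(R^+)^T,G)=g_\lambda(R^-(R^+)^T,\Id),
\]
which gives \eqref{p:bpg:Id}. Property \eqref{p:bpg:G} is the trivial rewriting $\{G'G\colon G'\in\G\}=\G$ inside the minimum in \eqref{glambdacar}. Property \eqref{p:bpg:0} then follows at once from \eqref{p:bpg:G} and the fact that the constant path $\beta\equiv R$, $v\equiv 1$ belongs to $\calU^T[R,R]$ with $F^T_\lambda=0$, hence $g^*_\lambda(R,R)=0$. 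For property \eqref{p:bpg:sym}, time reversal $(\beta,v)(t)\mapsto(\beta,v)(-t)$ gives $g^*_\lambda(R^-,R^+)=g^*_\lambda(R^+,R^-)$; combining this with the left-invariance $g^*_\lambda(GR^-,R^+)=g^*_\lambda(R^-,G^{-1}R^+)$ (coming from \eqref{eq:pbpg-inv} with $H=G^{-1}$) yields
\[
    g_\lambda(R^+,R^-)=\min_{G\in\G}g^*_\lambda(GR^-,R^+)=\min_{G\in\G}g^*_\lambda(R^-,G^{-1}R^+)=g_\lambda(R^-,R^+).
\]

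The subadditivity \eqref{p:bpg:subadd} is proved by concatenation. Given $(\beta_1,v_1)\in\calU^{T_1}[R^-,R]$ and $(\beta_2,v_2)\in\calU^{T_2}[R,R^+]$, the translated pair obtained by putting $(\beta_1,v_1)$ on $(-T_1-T_2,T_2-T_1)$ and $(\beta_2,v_2)$ on $(T_2-T_1,T_1+T_2)$ lies in $H^1$ (both components match at the joining point since $\beta_1(T_1)=\beta_2(-T_2)=R$ and $v_1(T_1)=v_2(-T_2)=1$), belongs to $\calU^{T_1+T_2}[R^-,R^+]$, and has $F^{T_1+T_2}_\lambda$-energy equal to the sum of the two. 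Taking infima yields the $g^*_\lambda$ version of \eqref{p:bpg:subadd}. To pass to $g_\lambda$, choose $G_1^*,G^*\in\G$ attaining the minima defining $g_\lambda(R^-,R)$ and $g_\lambda(R,R^+)$; then, by left-invariance, $g^*_\lambda(G_1^*R,G_1^*G^*R^+)=g^*_\lambda(R,G^*R^+)$, so the concatenation inequality for $g^*_\lambda$ combined with $G_2:=G_1^*G^*\in\G$ gives
\[
    g_\lambda(R^-,R^+)\le g^*_\lambda(R^-,G_2R^+)\le g^*_\lambda(R^-,G_1^*R)+g^*_\lambda(R,G^*R^+)=g_\lambda(R^-,R)+g_\lambda(R,R^+).
\]
The main subtlety, and the only non-formal step, is this last passage from $g^*_\lambda$ to $g_\lambda$ in \eqref{p:bpg:subadd}: one has to pick the matching element $G_2$ compatibly with the two minimizers $G_1^*,G^*$, which is where the left-invariance \eqref{eq:pbpg-inv} plays its crucial role. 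All remaining properties are then purely formal.
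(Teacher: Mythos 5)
Your proof is correct and follows essentially the same route as the paper's: constant paths for (i)--(ii), right multiplication by $(R^+)^T$ for (iii), time reversal for (iv), and concatenation of admissible paths for (v), with the $\lambda=+\infty$ case handled by taking suprema. The only difference is that you spell out explicitly the left-invariance $g^*_\lambda(GR^-,R^+)=g^*_\lambda(R^-,G^{-1}R^+)$ needed to pass from $g^*_\lambda$ to $g_\lambda$ in (iv) and (v), a step the paper compresses into ``taking the minimum over $\G$''; this is a correct and welcome clarification, not a deviation.
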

\begin{proof} 
Properties \ref{p:bpg:0} and \ref{p:bpg:G} follow immediately from
\eqref{glambdacar} and the fact that $F^T(\beta,1)$ vanishes if $\beta$ is a constant rotation.

Assertion \ref{p:bpg:Id} follows from the fact that for every
$(\beta,v)\in \calU^T(R^-,R^+)$, setting
 $\widetilde{\beta}(t):=\beta(t)(R^+)^T$
 one has $(\widetilde\beta,v)\in \calU^T[R^-(R^+)^T,\Id]$
 and, since $R^+\in \On$,
$| \widetilde{\beta}'|=|\beta'|$, and $\dist(\widetilde{\beta}, \On)=\dist(\beta, \On)$, so that
$F^T_\lambda(\beta,v)=F^T_\lambda(\widetilde\beta,v)$.

A similar argument is valid for $\widehat{\beta}(t):={\beta}(-t)$ , proving that \ref{p:bpg:sym} holds.

To prove \ref{p:bpg:subadd}, for any $T>0$ and any $(\beta_1,v_1)\in \calU^T[ R^-,R]$ and
$(\beta_2,v_2)\in \calU^T[ R,R^+]$,
we define the pair
\begin{equation*}
(\beta,v)(t):=
\begin{cases}
 (\beta_1,v_1)(t+T), & \text{ if } t\le 0,\\
 (\beta_2,v_2)(t-T), & \text{ if } t> 0.
\end{cases}
\end{equation*}
One checks that
$(\beta,v)\in \calU^{2T}[ R^-,R^+]$, and taking the infimum in $(\beta_1,v_1)$ and $(\beta_2,v_2)$, and the limit $T\to\infty$, proves the assertion for $g_\lambda^*$. Taking the minimum over $\G$ gives the result for $g_\lambda$.
\end{proof}

In order to prove the main properties of $g_\lambda$ it is conveninent to deal with a different characterization, obtained through minimization of profiles invariant by time-reparametrization. For
fixed $ R^-$, $R^+\in \On$, we define 
\begin{equation}\label{f:admiss2}
\begin{split}
	\calU[ R^-,R^+]:=  \{  ( & \beta,v)  \in  H^1((0,1); \Mno \times [0,1])\colon \\ &
	 \beta(0)= R^-,\, \beta(1)=R^+,\, v(0)=v(1)=1 \},
\end{split}
\end{equation}
and 
\begin{equation}\label{gbarla}
\begin{split}
	\ol g_\lambda( R^-, R^+) :=
	\inf \bigg\{  \int_0^1 & \sqrt{(1-v)^2 + 4\lambda \, \dist^2(\beta, \On)} \sqrt{f^2(v) |\beta'|^2+|v'|^2} \dt\colon \\ &
	(\beta,v) \in \calU[ R^-,R^+]  \bigg\}
\end{split}
\end{equation}
for $\lambda \in [0,+\infty)$, and 
\begin{equation}\label{gbarinf}
\ol g_\infty( R^-, R^+) :=
	\sup_{\lambda>0}
\ol g_\lambda( R^-, R^+).
\end{equation}

\begin{proposition}\label{le:equivglambda}
For every $\lambda \in[0,+\infty]$, it holds that $g^*_\lambda=\ol g_\lambda$.
\end{proposition}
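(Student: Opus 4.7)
The case $\lambda=\infty$ is immediate from the case $\lambda<\infty$: both $g^*_\infty$ and $\ol g_\infty$ are by definition the suprema over $\lambda>0$ of $g^*_\lambda$ and $\ol g_\lambda$ respectively, so once the equality is known for $\lambda$ finite it extends to $\lambda=\infty$. I therefore focus on $\lambda\in[0,\infty)$. Introducing the shorthands
\[
a:=\sqrt{f^2(v)|\beta'|^2+|v'|^2},\qquad b:=\sqrt{\tfrac{(1-v)^2}{4}+\lambda\,\dist^2(\beta,\On)},
\]
the integrand of $F^T_\lambda$ is $a^2+b^2$, while the integrand in the definition \eqref{gbarla} of $\ol g_\lambda$ is $2ab$. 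The whole proof is a reparametrization-based comparison between these two densities.

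\textbf{Direction $g^*_\lambda\ge \ol g_\lambda$.} This is the easy side. For any $(\beta,v)\in\calU^T[R^-,R^+]$ the arithmetic--geometric mean inequality gives $a^2+b^2\ge 2ab$ pointwise, so
\[
F^T_\lambda(\beta,v)\ge \int_{-T}^T 2ab\,\dt=\int_{-T}^T\sqrt{f^2(v)|\beta'|^2+|v'|^2}\sqrt{(1-v)^2+4\lambda\,\dist^2(\beta,\On)}\,\dt.
\]
The right-hand side is positively $1$-homogeneous in $(|\beta'|,|v'|)$ and hence invariant under absolutely continuous, strictly monotone reparametrizations; rescaling $[-T,T]$ affinely onto $[0,1]$ yields an element of $\calU[R^-,R^+]$ with the same integral, which is therefore bounded below by $\ol g_\lambda(R^-,R^+)$. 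Taking the infimum over $\calU^T[R^-,R^+]$ and then $T\to\infty$ concludes this direction.

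\textbf{Direction $g^*_\lambda\le\ol g_\lambda$.} Starting from $(\beta,v)\in\calU[R^-,R^+]$, the plan is to reparametrize so that the equipartition $a^2\approx b^2$ almost holds, forcing the AM--GM inequality to be nearly saturated. A preliminary truncation via the projection $\Phi$ of \eqref{f:proj}, which fixes $R^\pm\in\On\subset\ol B_{\sqrt d}$ and is $1$-Lipschitz, does not increase either $a$ or $b$ pointwise, so I may assume $|\beta|\le\sqrt d$ and in particular $b\in L^\infty(0,1)$. For $\eta>0$ I would then set $\rho_\eta:=(a+\eta)/(b+\eta)$ and define the monotone homeomorphism
\[
\phi_\eta:[0,1]\to[-T_\eta,T_\eta],\qquad \phi_\eta(s):=-T_\eta+\int_0^s\rho_\eta(\sigma)\,\mathrm{d}\sigma,\qquad T_\eta:=\tfrac{1}{2}\int_0^1\rho_\eta\,\ds,
\]
which is well-defined because $a\in L^2(0,1)$ forces $\rho_\eta\in L^1(0,1)$ and $\rho_\eta,1/\rho_\eta\in L^\infty(0,1)$ (with bounds depending on $\eta$ and $\|b\|_\infty$). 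The reparametrized pair $(\tilde\beta,\tilde v):=(\beta,v)\circ\phi_\eta^{-1}$ belongs to $\calU^{T_\eta}[R^-,R^+]$, and the change of variables $t=\phi_\eta(s)$ would yield
\[
F^{T_\eta}_\lambda(\tilde\beta,\tilde v)=\int_0^1\left[\frac{a^2}{\rho_\eta}+b^2\rho_\eta\right]\ds=\int_0^1\left[\frac{a^2(b+\eta)}{a+\eta}+\frac{b^2(a+\eta)}{b+\eta}\right]\ds.
\]
The integrand tends pointwise to $2ab$ as $\eta\to 0$ and, for $\eta\le 1$, is dominated by $a(b+\eta)+b(a+\eta)\le 2ab+a+b\in L^1(0,1)$. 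Dominated convergence then gives $F^{T_\eta}_\lambda(\tilde\beta,\tilde v)\to\int_0^1 2ab\,\ds$, so $g^*_\lambda(R^-,R^+)\le \int_0^1 2ab\,\ds$, and infimizing over $\calU[R^-,R^+]$ completes the proof.

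\textbf{Main difficulty.} The delicate step is handling the degenerate points where $a$ or $b$ vanish: at a zero of $a$ the natural equipartition choice $\rho=a/b$ would fail to be injective, while at a zero of $b$ it would force $T_\eta=\infty$. The $\eta$-regularization in $\rho_\eta$ handles both degeneracies simultaneously, and the preliminary $L^\infty$ truncation of $\beta$ is precisely what makes the dominating bound $2ab+a+b$ uniform in $\eta$ and integrable, allowing dominated convergence to close the argument.
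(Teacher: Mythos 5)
Your first inequality ($g^*_\lambda\ge\ol g_\lambda$) is correct and coincides with the paper's argument. The problem is in the converse direction, and it is exactly the point the paper's proof is built around.

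Your reparametrization requires $a=\sqrt{f^2(v)|\beta'|^2+|v'|^2}$ to lie in $L^1(0,1)$ (you assert $L^2$): without it $\rho_\eta=(a+\eta)/(b+\eta)\notin L^1$, so $T_\eta=+\infty$, and your dominating function $2ab+a+b$ is not integrable. But membership in $\calU[R^-,R^+]$ only gives $\beta',v'\in L^2$ and $v\in[0,1]$ with $v(0)=v(1)=1$, while $f$ blows up at $1$ like $\ell|\log(1-s)|/(1-s)$ by \eqref{0405230818}. Finiteness of the $\ol g_\lambda$-integrand $\int_0^1 2ab\ds$ only controls $(1-v)f(v)|\beta'|\sim\ell\,|\log(1-v)|\,|\beta'|$, which says nothing about $f(v)|\beta'|$ itself: a competitor with $\beta'\ne0$ on a region where $1-v$ is very small can have $\int 2ab$ finite (even near-optimal) while $a\notin L^1$. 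Since the infimum in \eqref{gbarla} ranges over all such competitors, restricting to those with $a\in L^2$ proves only $g^*_\lambda\le\inf\{\int 2ab\,\ds:\ a\in L^2\}$, which is a priori larger than $\ol g_\lambda$.

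Closing this gap is the actual content of the paper's proof: one truncates the phase field, $v_\eta:=v\wedge(1-\eta)$, so that $f(v_\eta)\le f(1-\eta)$ is bounded and the reparametrization becomes bilipschitz, and then one must show that the truncation increases the integrand only by a factor $\tfrac{\ell+\ol\omega(\eta)}{\ell-\ol\omega(\eta)}\to1$. That estimate (see \eqref{eq-approx-f}--\eqref{eq-estimate-truncation}) uses the monotonicity of $s\mapsto|\log(1-s)|\sqrt{1+a(1-s)^{-2}}$ and the precise asymptotics \eqref{0405230818} of $f$ at $1$; it is not a soft approximation step and cannot be replaced by the $\eta$-regularization of $\rho$, which only handles zeros of $a$ and $b$, not the blow-up of $a$. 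Your ``main difficulty'' paragraph identifies the degeneracies where $a$ or $b$ vanish, but the genuine difficulty is where $a$ diverges. (Your reduction of $\lambda=\infty$ to $\lambda<\infty$ by taking suprema, and the $L^\infty$ truncation via $\Phi$, are both fine.)
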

\begin{proof} 
The proof follows the same lines of the one of \cite[Proposition~4.3]{CFI14}. We argue for $\lambda \in [0,+\infty)$, the case $\lambda=+\infty$ being an easy consequence using the definitions.

The fact that $g^*_\lambda\geq \ol g_\lambda$ follows by a direct application of Young's inequality  and the  1--homogeneity in the derivatives of the integral functional in \eqref{gbarla}. Namely, for every $(\beta,v)\in
\calU^T[ R^-,R^+]$ we have
\[
\begin{split}
	F^T_\lambda(\beta,v) & \geq
	\int_{-T}^T \sqrt{(1-v)^2 + 4\lambda \, \dist^2(\beta, \On)} \sqrt{f^2(v) |\beta'|^2+|v'|^2} \dt \\ & =
	\int_0^1 \sqrt{(1-\widetilde v)^2 + 4\lambda \, \dist^2(\widetilde \beta, \On)} \sqrt{f^2(v) |\widetilde \beta'|^2+|\widetilde v'|^2} \dt \ge\ol g_\lambda(R^-,R^+)
\end{split}
\] 
for $\widetilde \beta (t)= \beta(2Tt-T)$, $\widetilde v (t)=v(2Tt-T)$, $t\in (0,1)$, and the inequality follows from the definition of $g^*_\lambda$.

To prove the opposite inequality, we first remark  that, by an approximation argument through more regular $(\beta,v)$, 
for any $R^-$, $R^+$, $\eta\in(0,1)$ we can find
competitors $(\beta,v)$ belonging to $ W^{1,\infty}((0,1); \Mno \times [0,1])
\cap \calU[ R^-,R^+]
$ and satisfying
\begin{equation}\label{1902241028}
	\int_0^1\sqrt{(1-v)^2 + 4\lambda \, \dist^2(\beta, \On)} \sqrt{f^2(v) |\beta'|^2+|v'|^2} \dt < \ol g_\lambda(  R^-, R^+) + \eta.
\end{equation}
We set
\begin{equation}\label{1902241041}
v_\eta(t):=v(t) \wedge (1-\eta), \quad
	\psi_\eta(t):=2 \int_0^t  \sqrt{\frac{\eta + f^2(v_\eta) |\beta'|^2+|v_\eta'|^2}{(1-v_\eta)^2 + 4 \lambda \, \dist^2(\beta, \On)}}  \,\mathrm{d}s, \qquad t\in[0,1],
\end{equation} 
obtaining a bilipschitz map $\psi_\eta\colon [0,1] \to [0, \psi_\eta(1)]$. Then we choose competitors for $g^*_\lambda( R^-, R^+)$ as follows:
\[
\widehat{\beta}_\eta (t):=
\begin{cases}
	 R^-, & t\in [-1-\psi_\eta(1),0], \\
	\beta (\psi_\eta^{-1} (t)) & t\in[0, \psi_\eta(1)], \\
	R^+, & t\in [\psi_\eta(1), 1+\psi_\eta(1)];
\end{cases}
\widehat{v}_\eta:=
\begin{cases}
	1, & t\in [-1-\psi_\eta(1),-1], \\
	(1-\eta)-t\eta, & t\in [-1,0], \\
	v_\eta (\psi_\eta^{-1} (t)), & t\in [0, \psi_\eta(1)], \\
	1+(t-\psi_\eta(1)-1)\eta, & t\in [\psi_\eta(1), 1+\psi_\eta(1)].
\end{cases}
\]
Namely, setting $T_\eta:= \psi_\eta(1)  +1$, we have that $(\widehat{\beta}_\eta, \widehat{v}_\eta)\in \calU^{T_\eta}[ R^-, R^+]$, and hence
\begin{equation}\label{f:bpg1}
	g^*_\lambda( R^-,  R^+) \leq F^{T_\eta}_\lambda(\widehat{\beta}_\eta, \widehat{v}_\eta),
\end{equation}
due to the fact that the optimization problems defining $g^*_\lambda( R^-,  R^+) $ are decreasing in $T$.
We  now prove that
\begin{equation*}\label{f:bpg2}
	\begin{split}
F^{T_\eta}_\lambda (\widehat{\beta}_\eta, \widehat{v}_\eta)
		&<  \int_0^1 \sqrt{(1-v)^2 + 4\lambda \, \dist^2(\beta, \On)} \sqrt{f^2(v) |\beta'|^2+|v'|^2} \dt+o_{\eta\to 0}(1).
	\end{split}
\end{equation*}
%following  the lines of  \cite[Proposition~4.3]{CFI14}.  
Using the change of variable $t=\psi_\eta(s)$, it is straightforward that
\[
\begin{split}
	\int_{0}^{\psi_\eta(1)} & \left(f^2(\widehat{v}_\eta) |\widehat{\beta}_\eta'|^2  + |\widehat{v}_\eta'|^2\right)\dt + \int_{0}^{\psi_\eta(1)}  \left(\frac{(1-\widehat{v}_\eta)^2}{4} +\lambda \,\dist^2(\widehat{\beta}_\eta, \On) \right) \dt 
	\\ &
	= \int_{0}^{1}  \left(f^2({v}_\eta) |\beta'|^2  + |{v}_\eta'|^2\right)\frac{1}{\psi_\eta'}\ds  + 
	\int_{0}^{1}  \left(\frac{(1-{v}_\eta)^2}{4} +\lambda \,\dist^2(\beta, \On) \right) \psi_\eta'\ds
	\\ &
	= \frac{1}{2}\int_{0}^{1} \left(f^2({v}_\eta) |\beta'|^2  + |{v}_\eta'|^2\right)
	\sqrt{\frac{(1-v_\eta)^2 + 4 \lambda \, \dist^2(\beta, \On)}{\eta + f^2(v_\eta) |\beta'|^2+|v_\eta'|^2}}  
	\ds
	\\ & 
	+  \frac{1}{2} \int_{0}^{1}  \sqrt{\eta + f^2(v_\eta) |\beta'|^2+|v_\eta'|^2}\,  \sqrt{(1-v_\eta)^2 + 4 \lambda \, \dist^2(\beta, \On)}\ds
	\\ &
	\leq c_\lambda \frac{\sqrt{\eta}}{2}
	 +  \int_{0}^{1}  \sqrt{f^2(v_\eta) |\beta'|^2+|v_\eta'|^2}\, \sqrt{(1-v_\eta)^2 + 4 \lambda \, \dist^2(\beta, \On)}\ds,
\end{split}
\]
where $c_\lambda=\sqrt{1+8\lambda d}$
(we used here subadditivity of the square root and the bound $\dist(\beta,\On)\le 2\sqrt d$).
Denoting $E_\eta:= I_{T_\eta}  \setminus [0, \psi_\eta(1)]$, we have
\[
\begin{split}
	\int_{E_\eta} & \left(f^2(\widehat{v}_\eta) |\widehat{\beta}_\eta'|^2  + |\widehat{v}_\eta'|^2 + \frac{(1-\widehat{v}_\eta)^2}{4} +\lambda \,\dist^2(\widehat{\beta}_\eta, \On) \right) \dt 
	\\ &
	= \int_{E_\eta}  \left(|\widehat{v}_\eta'|^2 + \frac{(1-\widehat{v}_\eta)^2}{4}  \right) \dt = 2 \eta^2 + \frac{\eta^2}{6},
\end{split}
\]
and hence 
\[
\begin{split}
	F^{T_\eta}_\lambda(\widehat{\beta}_\eta, \widehat{v}_\eta) &
	\leq c_\lambda \sqrt{\eta}+3\eta^2+ \int_{0}^{1}  \sqrt{f^2(v_\eta) |\beta'|^2+|v_\eta'|^2}\, \sqrt{(1-v_\eta)^2 + 4 \lambda \, \dist^2(\beta, \On)}\dt.	 
\end{split}
\]
Since $v_\eta'=0$ almost everywhere in the set $\{v_\eta \neq v\}$, we have
\[
\begin{split}
	\int_{0}^{1} & \sqrt{f^2(v_\eta) |\beta'|^2+|v_\eta'|^2}\, \sqrt{(1-v_\eta)^2 + 4 \lambda \, \dist^2(\beta, \On)}\dt
	\\  =&
	\int_{\{v_\eta=v\}}	  \sqrt{f^2(v) |\beta'|^2+|v'|^2}\, \sqrt{(1-v)^2 + 4 \lambda \, \dist^2(\beta, \On)}\dt \\ &
	+ \int_{\{v_\eta \neq v\}}	f(1-\eta)\,  |\beta'|\, \sqrt{\eta^2 + 4 \lambda \, \dist^2(\beta, \On)}\dt.
\end{split}
\]  
Finally, let $\ol\omega(\eta)$ be the continuity modulus of $\dfrac{1-s}{|\log(1-s)|}f(s)$ near $1$, so that
\begin{equation}\label{eq-approx-f}
\dfrac{|\log(1-s)|}{1-s} (\ell-\ol \omega(\eta)) \leq f(s) \leq \dfrac{|\log(1-s)|}{1-s} (\ell+\ol \omega(\eta)) , \qquad s\in [1-\eta,1).
\end{equation}

Since for every $a\geq 0$ the function 
$
s \mapsto \dfrac{|\log(1-s)|}{1-s} \sqrt{(1-s)^2+a}=
|\log(1-s)| \sqrt{1+ a(1-s)^{-2}}$
is increasing in $[0,1)$, and $v\geq 1-\eta$ in the set $\{v_\eta \neq v\}$, we obtain that
\begin{equation}
	\begin{split}\label{eq-estimate-truncation}
		\int_{\{v_\eta \neq v\}} &	f(1-\eta)\,  |\beta'|\, \sqrt{\eta^2 + 4 \lambda \, \dist^2(\beta, \On)}\dt 
		\\ & \leq 
		\frac{\ell+\ol \omega(\eta)}{\ell-\ol \omega(\eta)}\int_{\{v_\eta \neq v\}}	f(v)\,  |\beta'|\, \sqrt{(1-v)^2 + 4 \lambda \, \dist^2(\beta, \On)}\dt
		\\ & \leq 
		\frac{\ell+\ol \omega(\eta)}{\ell-\ol \omega(\eta)}\int_{\{v_\eta \neq v\}}	 \sqrt{f^2(v) |\beta'|^2+|v'|^2}\, \sqrt{(1-v)^2 + 4 \lambda \, \dist^2(\beta, \On)}\dt.
	\end{split}
\end{equation}
Summarizing,  gathering \eqref{f:bpg1} and the above computations we obtain that
\begin{equation}\label{eqgstarolgfinal}
\begin{split}
g^*_\lambda & ( R^-,  R^+)  \leq \ol g_\lambda(  R^-,  R^+)  \\ &
+ c_\lambda \sqrt{\eta}+4\eta^2+
\frac{2\ol \omega(\eta)}{\ell-\ol \omega(\eta)} \int_{0}^{1}  \sqrt{f^2(v) |\beta'|^2+|v'|^2}\, \sqrt{(1-v)^2 + 4 \lambda \, \dist^2(\beta, \On)}\dt.
\end{split}
\end{equation}
This permits us to conclude by the arbitrariness of $\eta>0$.
\end{proof}

At various steps hereafter we shall need to transport trajectories onto $\On$. We present here the basic computation to estimate the length of the resulting curves.
\begin{lemma}\label{lemmaprojOn}
 Let $I\subset\R$ be an interval, $\gamma\in W^{1,1}(I;\Mno)$ be such that
 $\dist(\gamma(t),\On)\le 1/2$ almost everywhere.
 Then there exist $C>0$ independent of $\gamma$, and  $\widehat\gamma\in W^{1,1}(I;\On)$
 such that
 \begin{equation}
  |\widehat\gamma'|(t)\le (1+C\dist(\gamma(t),\On)) |\gamma'|(t)\ \text{for a.e.}\ t\in I,\hskip5mm
  \text{ and }\hskip5mm
  \widehat\gamma=\gamma \text{ on } \{\gamma\in \On\}.
 \end{equation}
\end{lemma}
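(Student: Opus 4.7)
The plan is to define $\widehat\gamma$ via the nearest--point projection onto $\On$, realized explicitly through the polar decomposition, and then exploit the fact that the differential of this projection at any point of $\On$ is the orthogonal projection onto the tangent space (so has operator norm exactly $1$).

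First, for any $A\in\Mno$ with $\dist(A,\On)\le 1/2$, the singular values of $A$ lie in $[1/2,3/2]$; in particular $A^TA$ is strictly positive definite and $A$ is invertible. On the open set $\calU:=\{A\in\Mno : A^TA>0\}$ one can therefore define the smooth map
\[
 \pi(A):=A(A^TA)^{-1/2},
\]
and a direct computation gives $\pi(A)^T\pi(A)=(A^TA)^{-1/2}A^TA(A^TA)^{-1/2}=\Id$, so $\pi(\calU)\subset\On$; moreover $\pi(R)=R$ for every $R\in\On$. The second conclusion $\widehat\gamma=\gamma$ on $\{\gamma\in\On\}$ will then follow by setting $\widehat\gamma:=\pi\circ\gamma$.

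The key step is to show
\[
 |D\pi(A)|_{\mathrm{op}}\le 1+C\,\dist(A,\On) \hskip 5mm \text{for every } A \text{ with }\dist(A,\On)\le 1/2.
\]
At a point $R\in\On$, writing any $v\in\Mno$ as $v=Ra$ (so $a=R^Tv$), expanding $((R+tv)^T(R+tv))^{-1/2}=\Id-\tfrac{t}{2}(a+a^T)+O(t^2)$ and substituting yields
\[
 D\pi(R)[v]=\tfrac 12(v-Rv^TR).
\]
One checks immediately that this is the Frobenius-orthogonal projection onto the tangent space $T_R\On=R\cdot \mathfrak{so}(d)$: it is the identity on $\{Rb:b^T=-b\}$ and vanishes on the normal space $\{Rs:s^T=s\}$. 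In particular $|D\pi(R)|_{\mathrm{op}}=1$. Since $\pi$ is smooth on the relatively compact neighborhood $\{A:\dist(A,\On)\le 3/4\}$ and $\On$ is compact, $D\pi$ is Lipschitz there with some constant $L$. If $Q\in\On$ is the orthogonal factor in the polar decomposition of $A$, then $|A-Q|=\dist(A,\On)$, hence
\[
 |D\pi(A)|_{\mathrm{op}}\le |D\pi(Q)|_{\mathrm{op}}+L|A-Q|\le 1+L\,\dist(A,\On).
\]

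Finally, since $\pi$ is smooth on $\calU$ and $\gamma\in W^{1,1}(I;\Mno)$ takes values in $\calU$, the composition $\widehat\gamma:=\pi\circ\gamma$ lies in $W^{1,1}(I;\On)$, agrees with $\gamma$ wherever $\gamma\in\On$ (since $\pi|_{\On}=\Id$), and by the chain rule
\[
 |\widehat\gamma'(t)|=|D\pi(\gamma(t))[\gamma'(t)]|\le \bigl(1+L\,\dist(\gamma(t),\On)\bigr)|\gamma'(t)|
\]
for a.e.\ $t\in I$. The mildly non-trivial point is the computation of $D\pi(R)$ and the identification with the orthogonal projection onto $T_R\On$; once this is done, the Lipschitz perturbation argument and the chain rule deliver the claim with $C:=L$ independent of $\gamma$.
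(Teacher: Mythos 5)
Your proposal is correct and follows essentially the same route as the paper: both define $\widehat\gamma$ by composing $\gamma$ with the polar-type retraction $A\mapsto A(A^TA)^{-1/2}$, compute its differential at $R\in\On$ to be $v\mapsto\tfrac12(v-Rv^TR)$ with operator norm $1$, and then use smoothness near the compact set $\On$ to get the $1+C\,\dist(\cdot,\On)$ bound before applying the chain rule. Your identification of $D\pi(R)$ as the orthogonal projection onto $T_R\On$ is a slightly sharper way of seeing the norm bound that the paper obtains by the triangle inequality, but it is the same argument.
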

\begin{proof}
We use, as in the argument leading to \eqref{2801241031}, that $\psi(A):=A(A^TA)^{-1/2}$ is a smooth map from $\Mno\setminus\{\det=0\}$ to $\On$, which is in particular Lipschitz on the set
$\{\dist(\cdot,\On)\le 1/2\}$.
At the same time, for $R\in \On$
one has $\psi(R)=R$, and for any
$|\eta|\le 1/2$, with $\eta\in \Mno$, a Taylor expansion gives
\begin{equation}\begin{split}
 \psi(R+\eta)=&
 (R+\eta)\left((R^T+\eta^T)(R+\eta)\right)^{-1/2}
 =(R+\eta)(\Id+\eta^T R+ R^T\eta+\eta^T\eta)^{-1/2}\\
 =&R+\eta-\frac12 R\eta^T R-\frac12 \eta+O(|\eta|^2),
\end{split}\end{equation}
so that $|D\psi(R)\eta|\le |\eta|$, and hence $|D\psi(R)|_\mathrm{op}\leq 1$, which by regularity of $\psi$ implies
$|D\psi(\xi)|_\mathrm{op}\le 1+C\dist(\xi,\On)$,
whenever $\dist(\xi,\On)\le 1/2$.
Let $\widehat\gamma:=\psi\circ\gamma$.
Then $|\widehat\gamma'(t)|\le
|D\psi(\gamma(t))|_\mathrm{op}|\gamma'(t)|$ and the proof is concluded.
\end{proof}

We are now in a position to prove the main properties of $g_\lambda$, collected in the following result.

\begin{proposition}\label{prop:proprietag}
For every $\lambda\in [0,+\infty]$, the function $ g^*_\lambda \colon \On \times \On \to [0,+\infty)$ satisfies the following properties:
\begin{enumerate}
\item\label{prop:proprietag:upperbound}  there exists $\kappa \geq 1$ depending only on $d$ and $f$ and such that
	\begin{equation}\label{0105232216}
		g^*_\lambda( R^-, R^+) \leq 1 \wedge  \kappa  \Big(\, | R^+- R^-| \,\big|\log | R^+ - R^-|\big| +| R^+ - R^-|\Big),
	\end{equation}
	for every $ R^+,  R^- \in \On$, $\lambda\in [0,\infty]$;

\item\label{prop:proprietag:RS}  for every $R\in\On$ and for every sequence   $(R_n)_n \subset \On $  converging to  $R$, with $R_n \neq R$, there exists
	\[\lim_{n \to +\infty} \frac{g^*_\lambda(R_n, R)}{|R_n-R| \,\big|\log |R_n-R|\big|}=\frac{\ell}{2};
	\]
\item\label{prop:proprietag:C0}  $g^*_\lambda \in C^0(\On\times \On)$.
\end{enumerate}
All properties are immediately inherited by $g_\lambda$.
\end{proposition}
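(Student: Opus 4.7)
The plan is to work throughout with the reparametrization-invariant form $\overline g_\lambda$ provided by Proposition~\ref{le:equivglambda}, and to exploit that the integrand in \eqref{gbarla} is pointwise nondecreasing in $\lambda$, so $\overline g_\lambda$ is monotone in $\lambda$ and $\overline g_\infty=\sup_\lambda\overline g_\lambda$. Upper bounds uniform in $\lambda$ therefore extend to $\lambda=+\infty$, and the lower bound in (ii) reduces to the case $\lambda=0$. The analogous properties for $g_\lambda$ follow by taking the minimum over $G\in\G$.

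For (i) and the matching $\limsup$ in (ii), I construct a single family of competitors whose image lies in $\On$, killing the $\lambda$-term. When $R^\pm$ lie in the same component of $\On$, pick a smooth path $\gamma\colon[0,1]\to\On$ from $R^-$ to $R^+$ of length $L$, with $L\le c_d|R^+-R^-|$ when $R^+$ is close to $R^-$ (by smoothness of $\On$) and $L\le\diam(\On)$ in general; for $R^\pm$ in different components, $|R^+-R^-|$ is bounded below by a positive constant and only a uniform bound is required. For $v_*\in(0,1)$ define $(\beta,v)\in\calU[R^-,R^+]$ piecewise as $\beta\equiv R^-$ with $v$ affine from $1$ to $v_*$ on $[0,1/4]$, $\beta$ traversing $\gamma$ at constant speed with $v\equiv v_*$ on $[1/4,3/4]$, and $\beta\equiv R^+$ with $v$ affine from $v_*$ to $1$ on $[3/4,1]$. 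A direct piecewise computation yields
\[
\overline g_\lambda(R^-,R^+)\le (1-v_*)^2+(1-v_*)f(v_*)\,L.
\]
Setting $v_*=0$ (so $f(v_*)=0$) gives the uniform bound $\le 1$; setting $1-v_*=\sqrt{\ell L/2}$ and using $(1-v_*)f(v_*)\sim \ell|\log(1-v_*)|$ as $v_*\to 1$ yields $\overline g_\lambda(R^-,R^+)\le (\ell/2)L|\log L|+O(L)$ for small $L$. Since $L/|R^+-R^-|\to 1$ as $R^+\to R^-$, this delivers both (i) and the $\limsup\le \ell/2$ half of (ii).

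For the sharp lower bound in (ii), monotonicity reduces the task to $\overline g_0$. Fix $\varepsilon>0$ and $\delta>0$ such that $(1-s)f(s)\ge (\ell-\varepsilon)|\log(1-s)|$ for $s\in[1-\delta,1)$. Let $(\beta_n,v_n)\in\calU[R_n,R]$ be a sequence whose costs $E_n$ converge to $\liminf\overline g_0(R_n,R)$, and set $v_*^n:=\min v_n$, $s_n:=|R_n-R|\to 0$. From $\sqrt{f^2|\beta'|^2+|v'|^2}\ge |v'|$ and a total-variation argument on $t\mapsto (1-v_n(t))^2/2$, one gets $E_n\ge (1-v_*^n)^2$. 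If $\liminf v_*^n<1$, this is bounded below by a positive constant so $E_n/(s_n|\log s_n|)\to\infty$ and (ii) is trivial; otherwise, for large $n$, $v_*^n\ge 1-\delta$, and combining $\sqrt{\cdot}\ge f(v)|\beta'|$, the pointwise bound $(1-v_n)f(v_n)\ge (\ell-\varepsilon)|\log(1-v_*^n)|$ (using monotonicity of $|\log(1-\cdot)|$ and $v_n\ge v_*^n$), and $\int_0^1 |\beta_n'|\ge s_n$, yields $E_n\ge (\ell-\varepsilon)s_n|\log(1-v_*^n)|$. Hence
\[
E_n\ge \max\bigl((1-v_*^n)^2,\ (\ell-\varepsilon)s_n|\log(1-v_*^n)|\bigr),
\]
whose infimum in $x=1-v_*^n\in(0,1)$, attained at $x^2=(\ell-\varepsilon)s_n|\log x|$ (so $x\sim\sqrt{(\ell-\varepsilon)s_n|\log s_n|/2}$), is of order $(\ell-\varepsilon)s_n|\log s_n|/2\cdot (1+o(1))$. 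Dividing by $s_n|\log s_n|$ and sending $\varepsilon\to 0$ gives $\liminf\ge \ell/2$.

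Finally, (iii) follows from two applications of the subadditivity and symmetry in Proposition~\ref{p:bpg}: $|g^*_\lambda(R^-,R^+)-g^*_\lambda(\widetilde R^-,\widetilde R^+)|\le g^*_\lambda(R^-,\widetilde R^-)+g^*_\lambda(R^+,\widetilde R^+)$, and (i) makes the right-hand side vanish as $\widetilde R^\pm\to R^\pm$; uniformity in $\lambda$ covers $\lambda=+\infty$. The hardest step is the sharp lower bound in (ii): obtaining the exact constant $\ell/2$ requires the joint optimisation over $v_*^n$ of two competing one-sided estimates, together with a clean dichotomy on whether $v_*^n$ approaches $1$.
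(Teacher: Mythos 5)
Your proposal is correct and follows essentially the same route as the paper: both pass to the reparametrization-invariant $\overline g_\lambda$, use monotonicity in $\lambda$ to reduce the uniform bounds to competitors valued in $\On$ (you invoke smoothness of $\On$ where the paper uses its explicit projection Lemma~\ref{lemmaprojOn}, with the same conclusion $L=|R^+-R^-|(1+o(1))$), choose the same $1-v_*\sim\sqrt{t}$ profile for the upper bounds, and for the sharp lower bound combine the two estimates $E\ge(1-v_*)^2$ and $E\ge(\ell-\varepsilon)s|\log(1-v_*)|$ — your direct balancing of the two is just the contrapositive of the paper's argument by contradiction. The continuity argument via subadditivity and vanishing at the diagonal is also identical.
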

\begin{proof} 
\noindent \textit{Proof of \ref{prop:proprietag:upperbound}.}
It suffices to prove the bound for $\lambda\in [0,\infty)$, with $\kappa$ not depending on $\lambda$.
For every $ R^-$, $ R^+\in\On$ we set
\[
\ol\beta(t):=
\begin{dcases}
	 R^-, & t\in \left[0,\frac13\right], \\
R^-+(R^+-R^-)(3t-1), &   t\in \left[\frac13,\frac23\right], \\
	 R^+,&  t\in \left[\frac23,1\right];
\end{dcases}
\qquad 
\ol v(t) :=
\begin{dcases}
	1-3t, &  t\in \left[0,\frac13\right], \\
	0, &   t\in  \left[\frac13,\frac23\right], \\
	3t-2, &  t\in  \left[\frac23,1\right] .
\end{dcases}
\]	
Applying Proposition \ref{le:equivglambda}, and by a direct computation, we obtain
\begin{equation}\label{eqglambdale1}
\begin{split}
	g^*_\lambda( R^-, R^+) =& \ol g_\lambda( R^-, R^+) 
	\leq   \int_0^1 \sqrt{(1-\ol v)^2+4\lambda\, \dist^2(\ol\beta,\On)} \sqrt{f^2(\ol v) |\ol\beta'|^2+|\ol v'|^2} \dt\\
	= & \int_{(0,1/3)\cup (2/3,1)}(1-\ol v) |\ol v'| \dt= 1.
\end{split}
\end{equation}
This completes the proof in the case
$|R^+-R^-|\ge 1$.

Assume now
$|R^+-R^-|< 1$.
We shall complete the proof of \eqref{0105232216} using again the fact that $g^*_\lambda=\ol g_\lambda$, and choosing properly the competitors in the optimization problem defining $\ol g_\lambda$.
Using Lemma~\ref{lemmaprojOn} on the affine map joining $R^-$ with $R^+$, whose image is contained in a neighbouhood of $\On$ of size $|R^--R^+|/2<1/2$, we produce a curve 
$\gamma\in C^1([\frac13, \frac23];\On)$ such that
$\gamma(\frac13)=R^-$, $\gamma(\frac23)=R^+$, and
\[
\int_{\frac13}^{\frac23} |\gamma'(t)|\dt\le  |R^--R^+|+C|R^--R^+|^2.
\]

For $s\in (0,1)$ chosen below, we set
\begin{equation*}
	\beta(t):=\begin{dcases}
		 R^-, \quad & t\in \left[0, \frac13\right],\\
		\gamma(t),\quad & t\in \left[\frac13, \frac23\right],\\
		 R^+, \quad & t\in \left[\frac23,1\right],
	\end{dcases} \qquad
	v(t):=\begin{dcases}
		1-3t s^{1/2}, \quad & t\in \left[0, \frac13\right],\\
		1- s^{1/2}, \quad & t\in \left[\frac13, \frac23\right],\\
		1 + 3 (t-1)  s^{1/2}, \quad & t\in \left[\frac23,1\right].
	\end{dcases}
\end{equation*}
An explicit computation, similar to \eqref{eqglambdale1}, gives
\[
\begin{split}
	\ol g_\lambda( R^-,  R^+)  & \leq \int_{(0,1/3)\cup(2/3,1)}(1-v) |v'| 
	\dt + \int_\frac13^\frac23 s^{1/2} f(1- s^{1/2}) |\gamma'(t)|\, \dt
	\\  
	& =
	s +(|R^--  R^+|+C|R^--  R^+|^2) s^{1/2} f(1-s^{1/2})
\end{split}
\]
and hence choosing $s=|R^--  R^+|$ gives
\begin{equation}\label{1902241215}
	\ol	g_\lambda( R^-,R^+) \leq  | R^- - R^+| +|R^--  R^+|^{3/2}(1+C|R^--  R^+|)  f(1-| R^- - R^+|^{1/2}) .
\end{equation}
The conclusion then follows recalling  
\eqref{0405230818}
and Proposition~\ref{le:equivglambda}.

\smallskip

\noindent \textit{Proof of \ref{prop:proprietag:RS}.} Let $(R_n)_n\subset \On$ converging to $R$ be given. Notice that $R_n$, $R$ belong to the same connected component of $\On$, say $\SOn$, namely $R_n \in \SOn$ for every $n\in\N$, $R\in \SOn$.   We start by proving the lower bound:
\begin{equation}\label{0205230936}
	\liminf_{n\to +\infty}\frac{g_\lambda(R_n, R)}{|R_n-R| \,\big|\log |R_n-R|\big|}\geq \frac{\ell}{2}.
\end{equation}
By monotonicity, it suffices to prove it for $\lambda=0$.
If \eqref{0205230936} does not hold, then there are $\delta>0$ and 
$R_n\in \SOn$, with $s_n:=|R_n-R|\to0$,
and
$(\ol\beta_n,v_n)\in\calU[R_n,R]$ such that
\begin{equation}\label{0205230936b}
\int_0^1 (1-v_n) \sqrt{f^2(v_n) |\ol\beta_n'|^2+|v_n'|^2} \dt \le
 \frac{\ell-\delta}{2} s_n |\log s_n|.
\end{equation}
For every $n$, since
$v_n(0)=v_n(1)=1$ one obtains
\begin{equation}\label{0205231201}
{(1-v_{n}(t))^2} \le
\int_0^1  (1-v_n)|v_n'|\dt \le
 \frac{\ell}{2} s_n |\log s_n|, \qquad t\in (0,1).
\end{equation}
Moreover, using \eqref{0205230936b} and
$\int_0^1  |\ol\beta_n'| \dt \geq s_n$, we get
\begin{equation}\label{eqmin1vnfvn}
 \begin{split}
 \frac{\ell-\delta}{2} s_n |\log s_n|\ge&
\int_0^1 (1-v_n) f(v_n) |\ol\beta_n'|\dt
\ge s_n \min_{[0,1]} (1-v_n) f(v_n).
\end{split}
\end{equation}
Using first \eqref{eqmin1vnfvn} and then
\eqref{0205231201},
\begin{equation}
\begin{split}
 \frac{\ell-\delta}{2} \ge&
 \min_{[0,1]}
\frac{(1-v_n) f(v_n)}{|\log(1-v_n)|}
\frac{|\log(1-v_n)|} {|\log s_n|}
\ge\min_{[0,1]}
\frac{(1-v_n) f(v_n)}
{|\log(1-v_n)|}
\frac{{|\log (\frac\ell2 s_n |\log s_n|)|}}{ 2|\log s_n|}.
\end{split}\end{equation}
Now we take the limit $n\to\infty$.
By \eqref{0205231201},
$v_n\to1$ uniformly. Therefore
(recalling \eqref{0405230818})
the first fraction converges to $\ell$.
At the same time, since $s_n\to0$ the second fraction converges to $1/2$.
This gives a contradiction, since we assumed $\delta>0$.
This proves
\eqref{0205230936} and then the lower bound.

The upper bound follows immediately using first \eqref{1902241215} and then
\eqref{0405230818}. Precisely, with $s_n:=|R_n-R|\to0$,
\[
\begin{split}
	\limsup_{n\to +\infty}  \frac{g_\lambda( R_n, R)}{|R_n-R| \,\big|\log |R_n-R|\big|}
	\leq&
	\limsup_{n\to +\infty}\frac{s_n +
	(1+Cs_n)
	s_n^{3/2} f(1-s_n^{1/2 })}
	{s_n \,\big|\log s_n\big|}
	\\  =&
	\limsup_{n\to +\infty}\frac{s_n^{1/2}
	f(1-s_n^{1/2})}
	{2\big|\log s_n^{1/2}\big|} =
	\frac{\ell}{2}.
\end{split}
\]

Since we used only the bound in
\eqref{1902241215}, which does not depend on $\lambda$, the limit is uniform in $\lambda$, and therefore \ref{prop:proprietag:RS} holds also for $\lambda=\infty$.
\medskip

\noindent \textit{Proof of \ref{prop:proprietag:C0}.} This follows by combining \eqref{0105232216}, which guarantees the continuity at the origin, and the subadditivity stated in
Proposition~\ref{p:bpg}\ref{p:bpg:subadd}.
\end{proof}

The case $\lambda=\infty$ requires a separate treatment for construction of a recovery sequence leading to the upper bound (see Section \ref{Subsec:limsup}). If $R^+$ and $R^-$ have the same determinant, one can find admissible paths joining them that stay within $\On$. If not, however, such curves do not exist, therefore a different construction is needed. The approximating admissible paths are actually simpler, and can be constructed explicitly, however they depend on $\delta_\eps$, therefore in the global construction one cannot take the limit $\eps\to0$ before the limit $T\to\infty$. The next Lemma presents this construction.
\begin{lemma}\label{lemmalambdainfty}
Assume $R^+, R^-\in\On$.
\begin{enumerate}
\item\label{lemmalambdainftyginfty} If $\det R^+\ne \det R^-$ then
there is no
  $(\beta,v)\in \calU^T[R^-,R^+]$ such that
  $\beta(t)\in \On$ almost everywhere.
 \item\label{lemmalambdainftyson}
 If $g_\infty^*(R^-,R^+)<1$ then
  $\det R^+=\det R^-$ and  there are sequences
  $(\beta_T,v_T)\in \calU^T[R^-,R^+]$ such that
  $\beta_T(t)\in \On$ almost everywhere,
  \begin{equation}
\lim_{T\to\infty} F^T(\beta_T, v_T)\leq    g_\infty^*(R^-,R^+),
  \end{equation}
  and \begin{equation}
           \|\beta'_T\|_{L^2((-T,T))}^2\le C(R^+, R^-).
          \end{equation}
  \item\label{lemmalambdainftyub}
  For any $T,\lambda \ge 1$  there exists $(\beta_{T,\lambda}, v_{T,\lambda})\in\calU^T[R^-,R^+]$ such that
  \begin{equation}\label{eqseqforub}
F^T_{\lambda}(\beta_{T,\lambda}, v_{T,\lambda}) 
   \le g_\infty^*(R^-,R^+) +\omega(T)+\omega(\lambda)
  \end{equation}
and 
\begin{equation}\label{eql2boundbetatla}
 \|\beta_{T,\lambda}'\|_{L^2((-T,T))}^2 \le C\lambda T,
\end{equation}
  with $\omega(s)\to 0$ as $s\to +\infty$.
\end{enumerate}
 \end{lemma}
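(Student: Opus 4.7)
In one space dimension $H^1 \hookrightarrow C^0$, so any $\beta \in \calU^T[R^-, R^+]$ admits a continuous representative. Since $\On$ is closed and $\beta(t) \in \On$ for almost every $t$, continuity forces $\beta([-T,T]) \subset \On$; then $t \mapsto \det \beta(t)$ is continuous with values in $\{-1, +1\}$ on a connected interval, hence constant, yielding $\det R^+ = \det R^-$.

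\textbf{Part (ii), first assertion.} By contraposition, suppose $\det R^+ \neq \det R^-$; I show $g_\infty^*(R^-, R^+) \geq 1$. For any $(\beta, v) \in \calU^T[R^-, R^+]$, continuity of $\beta$ and the intermediate value theorem applied to $\det \beta$ yield $t_0$ with $\det \beta(t_0) = 0$; a singular-value argument gives $\dist(\beta(t_0), \On) \geq 1$. Fix $\epsilon \in (0,1)$ and distinguish two cases: either $\min v \leq \epsilon$, in which case the inequality $(1-v)^2/4 + |v'|^2 \geq (1-v)|v'|$ integrated along the $v$-profile yields $F^T_\lambda(\beta, v) \geq (1-\epsilon)^2$; or $v \geq \epsilon$ throughout, so that $\int |\beta'|^2 \leq F^T_\lambda/f^2(\epsilon)$ implies $\beta \in C^{1/2}$ with constant $\sqrt{F^T_\lambda/f^2(\epsilon)}$, hence $\dist(\beta, \On) \geq 1/2$ on a neighborhood of $t_0$ of length $\geq c f^2(\epsilon)/F^T_\lambda$; combining with $\lambda \int \dist^2 \leq F^T_\lambda$ gives $(F^T_\lambda)^2 \geq c \lambda f^2(\epsilon)$. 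Taking $\lambda \to \infty$, $F^T_\lambda \geq (1-\epsilon)^2$ in both cases, hence $g_\lambda^* \geq (1-\epsilon)^2$; sending $\epsilon \to 0$ gives $g_\infty^* \geq 1$.

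\textbf{Part (ii), construction.} Since $\det R^+ = \det R^-$, the matrices $R^+$ and $R^-$ lie in the same connected component of $\On$. I introduce the restricted cell formula
\[
\ol g_\infty^{(\On)}(R^-, R^+) := \inf\Bigl\{\int_0^1 (1-v)\sqrt{f^2(v)|\beta'|^2 + |v'|^2}\,dt : (\beta,v) \in \calU[R^-,R^+],\ \beta \in \On\text{ a.e.}\Bigr\}.
\]
Its $\On$-valued competitors have $\dist(\beta, \On) = 0$ and are admissible in $\ol g_\lambda$ for every $\lambda$, so $\ol g_\infty^{(\On)} \geq \sup_\lambda \ol g_\lambda = g_\infty^*$. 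The key reverse inequality is obtained by projecting $\ol g_\lambda$-near-minimizers onto $\On$: using $g_\infty^* < 1$, the bound $\lambda \int \dist^2(\beta_\lambda, \On)\,dt \leq \ol g_\lambda + 1/\lambda \leq 2$ implies the bad set $E_\lambda := \{\dist(\beta_\lambda, \On) > 1/2\}$ has Lebesgue measure $O(1/\lambda)$; on its complement I apply Lemma~\ref{lemmaprojOn} with length change bounded by $1 + C\dist(\beta_\lambda,\On)$, and on each connected subinterval of $E_\lambda$ I replace $\beta_\lambda$ by a short curve in $\On$ between the two endpoint projections, whose length is bounded by the $\Mno$-displacement of the original piece plus $O(1)$ (the existence of this curve uses that both endpoints lie in the same component of $\On$). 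Setting $v = 0$ inside the replaced intervals contributes $O(|E_\lambda|) \to 0$ to the energy, yielding $\ol g_\infty^{(\On)} \leq g_\infty^*$. Finally, the reparametrization argument of Proposition~\ref{le:equivglambda} applied to an $\ol g_\infty^{(\On)}$-near-minimizer (with $\dist \equiv 0$, so independent of $\lambda$) produces $(\beta_T, v_T) \in \calU^T[R^-, R^+]$ with $\beta_T \in \On$ a.e., $F^T(\beta_T, v_T) \to \ol g_\infty^{(\On)} \leq g_\infty^*$, and $\|\beta_T'\|_{L^2}^2 \leq C(R^+, R^-)$ from the bounded length of the reference path in $\On$.

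\textbf{Part (iii).} Given $T, \lambda \geq 1$, take an $\ol g_\lambda$-near-minimizer $(\beta_0, v_0) \in \calU[R^-, R^+]$ with $\int_0^1\sqrt{(1-v_0)^2+4\lambda\dist^2(\beta_0,\On)}\sqrt{f^2(v_0)|\beta_0'|^2+|v_0'|^2}\,dt \leq \ol g_\lambda + 1/\lambda$, chosen additionally so that $\|\beta_0'\|_{L^2}^2 \leq C(R^+, R^-)$ (achievable via the explicit profile from Proposition~\ref{prop:proprietag}\ref{prop:proprietag:upperbound}). Apply the reparametrization of Proposition~\ref{le:equivglambda} with smoothing parameter $\eta \sim 1/T$ so that the transition interval fits inside $[-T, T]$, and extend constantly; this gives $F^T_\lambda(\beta_{T,\lambda},v_{T,\lambda}) \leq \ol g_\lambda + C/\sqrt T \leq g_\infty^* + \omega(\lambda) + \omega(T)$ with $\omega(\lambda) := g_\infty^* - \ol g_\lambda$ and $\omega(T) := C/\sqrt T$, both vanishing. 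For $T$ too small for the reparametrization to fit, the ad hoc competitor from Proposition~\ref{prop:proprietag}\ref{prop:proprietag:upperbound} provides bounded energy absorbed into the large value of $\omega(T)$. The $L^2$ bound follows from the identity $\int |\beta_{T,\lambda}'|^2 dt = \int_0^1 |\beta_0'|^2/\psi_\eta'\, ds$ together with the lower bound $\psi_\eta' \geq 2\sqrt\eta$: $\|\beta'_{T,\lambda}\|_{L^2}^2 \leq C(R^+, R^-)/\sqrt\eta \leq C(R^+,R^-)\sqrt T \leq C\lambda T$ for $\lambda \geq 1$. The main obstacle throughout is the projection step in Part (ii): turning near-optimal $\ol g_\lambda$-profiles, which may leave $\On$, into $\On$-valued competitors of nearly the same energy—the intrinsic $\On$-length of replacement curves in the small bad region must be carefully controlled, and this is precisely where the first assertion of Part (ii), ensuring $R^+$ and $R^-$ are in the same component of $\On$, plays an essential role.
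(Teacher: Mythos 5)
Your part (i) and the first assertion of (ii) are correct, and your contrapositive argument for $\det R^+=\det R^-$ (a zero of $\det\beta$ forces $\dist(\beta(t_0),\On)\ge1$, which costs energy at least $(1-\eps)^2$ as $\lambda\to\infty$) is a legitimate alternative to the paper, which instead deduces the equality of determinants a posteriori by projecting near-minimizers onto $\On$ and invoking (i).

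The construction in part (ii), however, has a genuine gap. The step you are missing is a \emph{pointwise} bound $\|\dist(\beta_\lambda,\On)\|_{L^\infty}\le C\lambda^{-1/4}$: since $g^*_\infty<1$, the fundamental theorem of calculus applied to $v_\lambda$ gives $(1-v_\lambda)^2\le 1-2\delta$ uniformly, hence $f(v_\lambda)\ge f(\delta)>0$; applying the fundamental theorem of calculus to $t\mapsto \dist^2(\beta_\lambda(t),\On)$ and Young's inequality $2\lambda^{1/2}f(v_\lambda)\dist(\beta_\lambda,\On)|\beta_\lambda'|\le f^2(v_\lambda)|\beta_\lambda'|^2+\lambda\,\dist^2(\beta_\lambda,\On)$ then yields $\lambda^{1/2}\max\dist^2(\beta_\lambda,\On)\le C$. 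With this, Lemma~\ref{lemmaprojOn} applies on \emph{all} of $I_{T_\lambda}$ and the projection multiplies the energy by at most $(1+C\lambda^{-1/4})^2$; there is no bad set, and the same uniform bound $f(v_\lambda)\ge f(\delta)$ delivers the $L^2$ estimate on $\beta_T'$ directly. Your bad-set surgery does not close for three reasons: (a) $E_\lambda=\{\dist(\beta_\lambda,\On)>1/2\}$ may have arbitrarily many connected components, so the ``plus $O(1)$'' length added per component is not summable; (b) forcing $v=0$ on a bad component costs, via $\int(1-v)|v'|\,\dt$, an amount of order $(1-v|_{\partial E_\lambda})^2\approx 1$ \emph{per component}, not $O(|E_\lambda|)$, which alone would push the energy above $g^*_\infty<1$; (c) the projections of the two endpoints of a bad component need not lie in the same connected component of $\On$, so the replacement geodesic may not exist. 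A related problem affects part (iii): your reparametrized $\lambda$-dependent near-minimizer lives on an interval of half-length $T_\eta$ that is not controlled uniformly in $\lambda$, so ``absorbing'' the small-$T$ failure into $\omega(T)$ is inconsistent with $\omega(T)\to0$; the clean route is to use part (ii) when $g^*_\infty<1$ (the competitors are $\On$-valued, so the $\lambda$-term vanishes and $\omega(\lambda)=0$), while if $g^*_\infty=1$ an explicit competitor with $\beta$ affine on $(-\eta,\eta)$, $v=0$ there and optimal-profile tails, with $\eta=1/(\lambda T)$, gives \eqref{eqseqforub} and \eqref{eql2boundbetatla} for every $T,\lambda\ge1$.
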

In the upper bound for $\lambda=\infty$ we shall use \ref{lemmalambdainftyub} with sequences $\lambda_\eps=\eps/\delta_\eps$ and
$T_\eps\to +\infty$.
One can easily also obtain that \eqref{eqseqforub} is an equality, this is however not needed.

\begin{proof}
\emph{Proof of \ref{lemmalambdainftyginfty}.}
Since $\beta\in H^1$, it has a continuous representative, and so does $\det\beta$. A continuous function which takes values in $\{\pm1\}$ is constant.

\emph{Proof of \ref{lemmalambdainftyson}.}
For every $\lambda\ge1$ we pick $T_\lambda$ and then $(\beta_\lambda, v_\lambda)\in \calU^{T_\lambda}[R^-,R^+]$ so that
\begin{equation}\label{eqlainftyftlabv}
 F^{T_\lambda}_\lambda (\beta_\lambda,v_\lambda)\le g^*_\lambda(R^-,R^+) +\frac1\lambda.
\end{equation}
We can assume that $\beta_\lambda$ and $v_\lambda$ are continuous.
We estimate, using that $v_\lambda(\pm T_\lambda)=1$ and the fundamental theorem of calculus on $(-T_\lambda,t)$ and on $(t,T_\lambda)$,
\begin{equation}\label{eqivlader}
\max_{I_{T_\lambda}} (1-v_\lambda)^2
\le \int_{-T_\lambda}^{T_\lambda} |1-v_\lambda| \, |v_\lambda'| \dt \le
g^*_\lambda(R^-,R^+) +\frac1\lambda.
\end{equation}
The right-hand side converges to
$g^*_\infty(R^-,R^+)<1$ for $\lambda\to\infty$.
Setting $\delta:=\frac14(1-g^*_\infty(R^-,R^+))>0$, for $\lambda$ large we have
\begin{equation}\label{eqivladerris}
(1-v_\lambda(t))^2\le 1-2\delta \hskip1cm \text{ for all $t\in I_{T_\lambda}$}.
\end{equation}
By monotonicity of $f$, this implies $f(v_\lambda)\ge f(\delta)$ everywhere.
Therefore  \eqref{eqlainftyftlabv} implies
\begin{equation}\label{f:h1bounddd}
 \int_{-T_\lambda}^{T_\lambda} |\beta_\lambda'|^2 +|v_\lambda'|^2 \dt \le C,
 \end{equation}
where $C>0$ may depend on
$R^-$, $R^+$ via $\delta$, but not on $\lambda$.
 By the same computation as in \eqref{eqivlader},  
using that $\dist(\beta_\lambda(\pm T_\lambda),\On)=0$,
 \begin{equation}\label{eqivlader2}
	\begin{split}
2  \lambda^{1/2}f(\delta)\max_{t\in I_{T_\lambda}} \dist^{2}(\beta_\lambda(t), \On) &
\le \int_{-T_\lambda}^{T_\lambda}
2\lambda^{1/2} f(v_\lambda)\, \dist(\beta_\lambda, \On)|\beta_\lambda'| \dt 
\\ &
\le
g^*_\lambda(R^-,R^+) +\frac1\lambda +C \le
C+1,
\end{split}
\end{equation}
therefore $\|\dist(\beta_\lambda, \On)\|_{L^\infty}\le C \lambda^{-1/4}$ (recall that $C$ depends on $\delta$ but not on $\lambda$).

Let $\widehat\beta_\lambda$
 be the result of Lemma~\ref{lemmaprojOn} applied to $\beta_\lambda$, which obeys
 \begin{equation}
 |\widehat\beta_\lambda'| \le (1 + C \dist(\beta_\lambda,\On))|\beta_\lambda'|.
\end{equation}
Therefore, by \eqref{f:h1bounddd},  $\widehat\beta_\lambda \in H^1(I_{T_\lambda};\On)$, and hence, by \ref{lemmalambdainftyginfty}  $\det R^+=\det R^-$. Moreover, it holds
\begin{equation}
  F^{T_\lambda}_\lambda (\widehat \beta_\lambda,v_\lambda)\le
  \left(1+\frac{C}{\lambda^{1/4}}\right)^2
  F^{T_\lambda}_\lambda (\beta_\lambda,v_\lambda)\le
  g^*_\lambda(R^-,R^+) +\frac C{\lambda^{1/4}}+\frac1\lambda,
  \end{equation}
  and, taking $\lambda\to\infty$, we obtain
  \begin{equation}
\limsup _{\lambda\to\infty} F^{T_\lambda}_\lambda (\widehat \beta_\lambda,v_\lambda)\le
g^*_\infty(R^-,R^+).
  \end{equation}
By monotonicity in $T$, the same holds along any sequence $T\to\infty$.
%Finally, by \ref{lemmalambdainftyginfty}  $\det R^+=\det R^-$.

\emph{Proof of \ref{lemmalambdainftyub}:}
If $g^*_\infty(R^-,R^+)<1$, it follows from \ref{lemmalambdainftyson} with $(\beta_{T,\lambda},v_{T,\lambda})=(\beta_T,v_T)$.

If not, we construct explicitly an appropriate sequence
 $(\beta_{T,\lambda},v_{T,\lambda})\in \calU^{T}[R^-,R^+]$, for any $T>4$.
Fix $\eta\in (0,1)$, chosen below depending on $\lambda$ and $T$, and set
\begin{equation}
 v_{T,\lambda}(t):=
 \begin{cases}
 0, &\text{ if } |t|<\eta,\\
 1-e^{-(|t|-\eta)/2},
 &\text{ if } \eta\le |t|< T-2+\eta,\\
 1-e^{-(T-2)/2}
 (T-1+\eta-|t|),
 &\text{ if }  T-2+\eta\le |t|< T-1+\eta,\\
 1,
 &\text{ if } |t|\ge T-1+\eta.
 \end{cases}
\end{equation}
Correspondingly, we set
\begin{equation}
 \beta_{T,\lambda}(t):=
 \begin{cases}
  R^-, &\text{ if } t\leq -\eta ,\\
   \dfrac{\eta+t}{2\eta}R^+
  + \dfrac{\eta-t}{2\eta}R^-,&
  \text{ if } |t|< \eta, \\
   R^+,  &\text{ if  } t \geq \eta,
 \end{cases}
\end{equation}
so that $\beta_{T,\lambda}'(t)=0$ for $|t|\ge \eta$, and $(\beta_{T,\lambda}, v_{T,\lambda})\in\mathcal U^T[R^-,R^+]$.
We estimate
\begin{equation}
	\begin{split}
	\int_{-T}^{T} \frac{(1-v_{T,\lambda})^2}4+|v_{T,\lambda}'|^2 \dt
	&\le \frac{2\eta}4+
	2\int_0^\infty \frac{e^{-t}}4 + \frac{e^{-t}}{4}\dt
	+ 2e^{-(T-2)} \left(\frac14+1\right)\\
&	\le 1+3e^{-T/2}+\frac\eta2
	\le g^*_\infty(R^-,R^+)+3e^{-T/2}+\frac\eta2,
	\end{split}
\end{equation}
$ f(v_{T,\lambda})|\beta_{T,\lambda}'|=0$ almost everywhere,
\begin{equation}
 \int_{-T}^T
 \lambda\,  \dist^2(\beta_{T,\lambda},\On)
  \dt
 \le
 2\eta \, \lambda \, 4d,
\end{equation}
and 
\begin{equation}
 \|\beta_{T,\lambda}'\|_{L^2((-T,T))}^2=\frac{|R^+-R^-|^2}{(2\eta)^2}2\eta\le \frac{d}{\eta}.
\end{equation}
At this point we choose $\eta:=1/(\lambda T)$ and conclude the proof.
\end{proof}

\section{Proof of  $\Gamma$-convergence}\label{sec:proofmain}

In this last section we show the $\Gamma$-convergence result by proving separately the lower bound for $\tF_\e$ and the upper bound for $\F_\e$  (recall \eqref{0904251306} and \eqref{1304251245}).

\subsection{Lower bound}\label{subsec:liminf}

This section is devoted to the proof of the following lower bound result (see (LB)' in Theorem \ref{thm:maineq}).

\begin{theorem}\label{th-lb}
	For any $(u_\varepsilon, v_\varepsilon)_\varepsilon
	\subset L^1(\Omega;\Mno/\G\times [0,1])$ and $u \in \PC(\Omega; \On/\G)$ satisfying
\[
	u_\varepsilon \to u \text{ in }L^1(\Omega; \Mno/\G),\qquad v_\varepsilon \to 1 \text{ in }L^1(\Omega).
\]
	it holds
	\begin{equation*}
		\tF^\lambda(\uu,1)\leq \liminf_{\varepsilon\to 0} \tF_\varepsilon(\uu_\e, v_\varepsilon).
	\end{equation*} 
\end{theorem}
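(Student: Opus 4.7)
The plan is to reduce the lower bound to a one-dimensional statement by slicing, and then to match the sliced energy against the cell formula \eqref{glambda} defining $g_\lambda$. Fix a direction $\xi\in\SN$. By Proposition~\ref{prop-slicing}(ii) applied to $u_\e\in H^1(\Omega;\Mno/\G)$ we have $(u_\e)\xy\in H^1(\Omega\xy;\Mno/\G)$ and $|\nabla u_\e|\ge |((u_\e)\xy)'|$ along the slice, while the corresponding slicing of the Modica--Mortola piece $\AT_\e(v_\e)$ and of the penalization $\frac1{\delta_\e}\dG^2(u_\e,\On/\G)$ is standard. By Fubini, for every $\xi$,
\begin{equation*}
\tF_\e(u_\e,v_\e) \;\ge\; \int_{\Omega^\xi} \widetilde F^{1D}_\e\!\left((u_\e)\xy,(v_\e)\xy;\Omega\xy\right) d\hN(y),
\end{equation*}
where $\widetilde F^{1D}_\e$ is the obvious 1D analogue. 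The passage $u_\e\to u$ in $L^1(\Omega;\Mno/\G)$ gives, up to a subsequence, convergence of the slices for $\hN$-a.e.\ $y\in\Omega^\xi$; since $u\in\PC(\Omega;\On/\G)$, for such $y$ the slice $u\xy$ belongs to $\PC(\Omega\xy;\On/\G)$ and has a finite jump set $J_{u\xy}$. By Fatou's lemma it suffices to show the 1D lower bound
\begin{equation*}
\sum_{t_0\in J_{u\xy}} g_\lambda\bigl((u\xy)^-(t_0),(u\xy)^+(t_0)\bigr) \;\le\; \liminf_{\e\to 0}\widetilde F^{1D}_\e\!\left((u_\e)\xy,(v_\e)\xy;\Omega\xy\right)
\end{equation*}
for a.e.\ slice.

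For the 1D lower bound, fix a slice and, to simplify notation, write $(u_\e,v_\e)$ for $((u_\e)\xy,(v_\e)\xy)$ and $u$ for $u\xy$. I apply Lemma~\ref{l:clean}(i) on each fixed slice to produce $G_\e\in L^\infty(\Omega\xy;\G)$ with $\beta_\e:=G_\e u_\e\in H^1(\Omega\xy;\Mno)$ and $|\beta_\e'|=|(u_\e)'|$ a.e., so the 1D energy becomes
\begin{equation*}
\int_{\Omega\xy}\!\left( f_\e^2(v_\e)|\beta_\e'|^2 + \frac{(1-v_\e)^2}{4\e}+\e|v_\e'|^2 + \frac{1}{\delta_\e}\dist^2(\beta_\e,\On)\right)\dt.
\end{equation*}
Enumerate the jump points $t_1<\dots<t_J$ of $u$ and, for small $r>0$, localize onto pairwise disjoint intervals $I_j=(t_j-r,t_j+r)$. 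By subadditivity and positivity it is enough to bound the energy on each $I_j$ from below by $g_\lambda((u\xy)^-(t_j),(u\xy)^+(t_j))$.

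On a given $I_j$, a mean-value/coarea argument on $\AT_\e(v_\e;I_j)$ (analogous to the selection of $t_n^\rho$ in \eqref{eqestphit}) yields points $s_\e^-\in(t_j-r,t_j-r/2)$ and $s_\e^+\in(t_j+r/2,t_j+r)$ with $v_\e(s_\e^\pm)\to 1$; at the same time, using that $\int_{I_j\setminus E_n^\rho}|\beta_\e'|\,dt$ is controlled by the energy (as in \eqref{f:stEnc}) and that $u_\e\to u$ in $L^1$, one selects these points so that, after possibly multiplying $\beta_\e$ by a constant element of $\G$, $\beta_\e(s_\e^\pm)$ converges to representatives $R^\pm$ of $(u\xy)^\mp(t_j),(u\xy)^\pm(t_j)$. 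A short affine interpolation of $\beta_\e$ to the exact values $R^\pm$ on $(s_\e^\pm\mp \e,s_\e^\pm)$ and of $v_\e$ to $1$ on the same intervals costs $o(1)$ in the energy (thanks to $f_\e\le M_\e\wedge\sqrt\e f$ and the explicit rate in \eqref{eq-approx-f}); then rescaling to $(-T_\e,T_\e)$ with $T_\e=(s_\e^+-s_\e^-)/2\to\infty$ gives an admissible competitor in $\calU^{T_\e}[R^-,R^+]$. This yields
\begin{equation*}
\liminf_{\e\to 0}\widetilde F^{1D}_\e(u_\e,v_\e;I_j)\;\ge\; g_\lambda^*(R^-,R^+)\;\ge\; g_\lambda((u\xy)^-(t_j),(u\xy)^+(t_j)),
\end{equation*}
the last inequality by \eqref{glambdacar} and the freedom in choosing the representatives.

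Finally, summing over the finitely many jumps $t_j$ on each slice and integrating in $y$, we obtain, for every $\xi\in\SN$,
\begin{equation*}
\int_{J_u} g_\lambda(\beta^-,\beta^+)\,|\nu_u\cdot\xi|\dhN \;\le\; \liminf_{\e\to 0}\tF_\e(u_\e,v_\e),
\end{equation*}
where $\beta$ is any lifting from Theorem~\ref{theoliftingmanifold}. Taking the supremum over a countable dense family of directions and using the standard localization argument (Besicovitch derivation combined with the fact that the left-hand side is the variation of a Radon measure, cf.\ \cite[Lemma~15.2 and Proposition~2.31]{AmbFusPal00}) replaces $|\nu_u\cdot\xi|$ by $1$ inside the integral and produces $\tF^\lambda(u,1)$ on the left. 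The main obstacle is the 1D matching step, where the competitor must simultaneously satisfy the fixed endpoint conditions $\beta(\pm T)\in\On$ and $v(\pm T)=1$ demanded by \eqref{eqdefcalUT}; this is handled by the selection of ``good'' points $s_\e^\pm$ together with the $\G$-invariance built into $g_\lambda$ via \eqref{glambdacar}, which is what permits the reduction from the arbitrary lift $G_\e u_\e$ to an explicit pair of representatives $R^\pm$.
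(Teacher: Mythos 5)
Your overall architecture (slicing in every direction $\xi$, reducing to a 1D lower bound around each jump of the slice, then recovering $\calH^{N-1}\LL J_u$ by a supremum over directions) is a legitimate alternative to the paper's route, which instead blows up at $\calH^{N-1}$-a.e.\ point of $J_u$ and then slices only in the normal direction $\nu_u(x_0)$. However, your 1D matching step has a genuine gap: you never address the truncation $M_\eps$ in the prefactor $f_\eps=M_\eps\wedge\sqrt\eps f$. Since $f_\eps^2\le \eps f^2$, the sliced energy carries the weight $\frac{M_\eps^2}{\eps}\wedge f^2(v)$ after rescaling to the cell-problem variable, which is \emph{smaller} than the weight $f^2(v)$ appearing in $F^T_\lambda$. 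Hence the pair $(\beta_\eps,v_\eps)$, although admissible for the cell problem after your interpolation, may have cell-problem energy strictly larger than its sliced energy, and you cannot conclude $\liminf_\eps \widetilde F^{1D}_\eps\ge g_\lambda^*(R^-,R^+)$. The paper flags this as the main difficulty of the lower bound and resolves it by introducing the intermediate prefactors $h_j(s)=\frac{js}{1-s}\wedge f(s)$, the densities $g_{j,\lambda}$ with $\sup_j g_{j,\lambda}\ge g_\lambda$ (Lemma~\ref{p:approxg}), and the truncation $\widetilde v_k:=v_k\wedge(1-j\sqrt{\eps_k}/M_k)$ in Lemma~\ref{l:lb1d}, which makes the $M_k$-truncation inactive at the price of an error $j^2/(4M_k^2)\to0$. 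Some substitute for this mechanism is indispensable.

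A second, related problem is your claim that the affine interpolation of $\beta_\eps$ to the exact endpoint values over intervals of length $\eps$ ``costs $o(1)$''. Near the good points you have $v_\eps\approx 1$, so $f(v_\eps)$ blows up there, and the interpolation contributes roughly $f^2(1-\alpha_\eps)\,|\beta_\eps(s_\eps^\pm)-R^\pm|^2/\eps$ to the cell-problem energy, which is not obviously vanishing. The paper's Lemma~\ref{lemmalbGjlambdaboundaryvalues} gets around this precisely because $(1-s)h_j(s)\le j$ is bounded, so the interpolation over the region $\{v\ge 1-\alpha_k\}$ can be estimated by $Cj\alpha_k$; with the untruncated $f$ this estimate fails. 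Finally, two smaller omissions: the case $\lambda=\infty$ (where $\eps/\delta_\eps$ must be compared with an auxiliary finite $\lambda'$ before passing to the supremum, as in the last step of Lemma~\ref{l:lb1d}) is not discussed, and the final ``supremum over directions'' step should be phrased as a supremum of measures over open sets using the superadditivity of $A\mapsto\liminf_\eps\widetilde\F_\eps(u_\eps,v_\eps;A)$, which you gesture at but do not set up. Apart from these points, the selection of good points $s_\eps^\pm$, the use of Lemma~\ref{l:clean} on slices, and the handling of the $\G$-ambiguity via \eqref{glambdacar} are all consistent with the paper's treatment.
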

The proof uses first a blow up argument to reduce to the case of a flat interface with the limiting $\uu$ taking only two values and then a slicing argument for functions in $H^1(\Omega;\Mno/\G)$ to reduce to a one dimensional problem in the interval
$I:=\left(-\frac 12, \frac 12 \right)$. Being in dimension one allows us to exploit the relation between $H^1(I;\Mno/\G)$ and $H^1(I;\Mno)$ discussed in Lemma \ref{l:clean}. The main additional difficulty in the treatment of  these one dimensional sections, which is handled  in a sequence of lemmas before the main proof, is the fact that the functionals $\tF_\e$ are not invariant under scaling, because the truncation with $M_\e$ of the prefactor $f_\e$ scales differently from the rest. Indeed the surface energy $g_\lambda$ defined in Section \ref{sec:surfen} does not contain any truncation.

To deal with this, we introduce an intermediate step with a further truncation in the prefactor
and
for every $j\in \N$ we replace $f$ by
\begin{equation}\label{0203241030}
h_j(s):=\frac{js}{1-s} \wedge f(s), \qquad s\in [0,1),
\end{equation}
 and we introduce the  corresponding surface energy densities
\begin{equation}\label{gbarlaj}
\begin{split}
{g}_{j,\lambda}( R^-, R^+) :=
\inf \bigg\{  \int_{-\frac 12}^{\frac 12} & \sqrt{(1-v)^2 + 4\lambda \, \dist^2(\beta, \On)} \sqrt{h_j^2(v) |\beta'|^2+|v'|^2} \dt\colon 
\\
&
(\beta,v) \in \calU^{1/2}[ R^-,R^+]  \bigg\}
\end{split}
\end{equation}
for $\lambda \in [0,+\infty)$, and 
\begin{equation}\label{gbarinfj}
	{g}_{j,\infty}( R^-, R^+) := \sup_{\lambda >0} {g}_{j,\lambda}( R^-, R^+).
\end{equation} 
The set of admissible pairs $ \calU^{1/2}[ R^-,R^+]$ is defined in \eqref{eqdefcalUT}. 
These are good approximations of $g_\lambda$ as shown in Lemma~\ref{p:approxg} below. The reason for doing this is that it permits to replace the phase field $v_\e$ with a truncated version $\tv_\e$ with a small error and, at the same time, to remove the truncation from the prefactor.

\begin{lemma}\label{p:approxg}
For  any $\lambda\in[0,+\infty]$, $R^+$, $ R^- \in \On$, the following holds:
	\begin{equation}\label{f:approxg}
	g_\lambda( R^-, R^+)\leq	\lim_{j\to +\infty}{g}_{j,\lambda}( R^-,R^+).
	\end{equation}
\end{lemma}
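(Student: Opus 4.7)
My plan is to establish the formally stronger inequality
\begin{equation*}
g^*_\lambda(R^-, R^+) \,\leq\, \lim_{j\to\infty} g_{j,\lambda}(R^-, R^+),
\end{equation*}
from which Lemma~\ref{p:approxg} follows immediately since $g_\lambda \leq g^*_\lambda$ by definition \eqref{glambdacar} (take $G=\Id$). For $\lambda=\infty$, one further uses that $g_{j,\infty} = \sup_{\lambda \in [0,\infty)} g_{j,\lambda}$ and $g_\infty = \sup_{\lambda \in [0,\infty)} g_\lambda$ are monotone in $\lambda$ and exchanges the supremum with the monotone limit in $j$.

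First, since $h_j$ is pointwise non-decreasing in $j$ with $h_j(s)\uparrow f(s)$ for every $s \in [0,1)$ (because $js/(1-s)\to+\infty$), the integrand in \eqref{gbarlaj} is monotone non-decreasing in $j$. Hence $\ell := \lim_j g_{j,\lambda}(R^-, R^+)$ exists, and $\ell \leq \bar g_\lambda(R^-, R^+) = g^*_\lambda(R^-, R^+)$ from $h_j \leq f$ and Proposition~\ref{le:equivglambda}. For each $j$ I would choose an almost-minimizer $(\beta_j, v_j)\in \calU^{1/2}[R^-, R^+]$ with $\Phi_j(\beta_j, v_j) \leq g_{j,\lambda}(R^-, R^+) + 1/j$, where $\Phi_j$ denotes the integral in \eqref{gbarlaj}; by the truncation argument of Lemma~\ref{l:linfty} one may further assume $|\beta_j|\leq \sqrt d$.

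Next I would use the reparametrization invariance of $\Phi_j$ to adjust each $(\beta_j, v_j)$ so as to obtain uniform regularity bounds. The natural choice is a regularized arc length in the product metric on $\Mno\times[0,1]$, e.g.\ $\tau_j(t) := \int_{-1/2}^t \sqrt{|\beta_j'|^2 + |v_j'|^2 + \eta_j}\,ds$ with $\eta_j>0$ small, followed by affine rescaling back to $(-1/2,1/2)$; this yields $(\beta_j,v_j)$ uniformly bounded in $W^{1,\infty}$ and permits via Ascoli-Arzelà the extraction of a subsequential uniform limit $(\beta_*, v_*)\in \calU^{1/2}[R^-, R^+]$, with $(\beta_j', v_j') \rightharpoonup (\beta_*', v_*')$ weakly in $L^2$. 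The conclusion is then a double-limit: fix $m\in\N$; for every $j \geq m$ the monotonicity $h_m \leq h_j$ gives $\Phi_m(\beta_j, v_j) \leq \Phi_j(\beta_j, v_j) \leq \ell + o(1)$. Since the integrand of $\Phi_m$ is continuous in $(\beta, v)$ and convex in $(\beta', v')$, the functional $\Phi_m$ is lower semicontinuous with respect to this convergence, whence $\Phi_m(\beta_*, v_*) \leq \ell$. Letting $m\to\infty$, monotone convergence ($h_m^2\uparrow f^2$) yields $\Phi_\infty(\beta_*, v_*)\leq\ell$, and since $(\beta_*, v_*)$ is a competitor for $\bar g_\lambda(R^-,R^+)$, this gives $g^*_\lambda(R^-,R^+)\leq\ell$.

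The main obstacle I foresee is the compactness step. The energy bound $\Phi_j(\beta_j, v_j)\leq C$ does not by itself yield uniform $W^{1,p}$ estimates, because the coefficient $h_j(v_j)$ in front of $|\beta_j'|$ can vanish where $v_j$ is small and because the integrand has weighted product form rather than quadratic form. To achieve the uniform bounds needed for Ascoli-Arzelà, one must either carefully exploit the reparametrization invariance together with an auxiliary regularization that is later removed, or, more delicately, first refine each $(\beta_j, v_j)$ within its $\calU^{1/2}$-class to a path of controlled total variation (using that the Finsler length defined by $\Phi_j$ dominates the $BV$ variation of the curve). Ensuring that the boundary conditions $v_j(\pm 1/2)=1$ and the endpoint values of $\beta_j$ are preserved in the limit is a further technical point requiring care.
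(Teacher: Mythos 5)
Your overall strategy (compactness of almost-minimizers plus lower semicontinuity) is genuinely different from the paper's, and the obstacle you flag at the end is not a technicality: it is where the argument breaks. The energy bound $\Phi_j(\beta_j,v_j)\le C$ gives no control on the total variation of $\beta_j$ on the set where $v_j$ is close to $0$, because there the weight $(1-v_j)h_j(v_j)\le (1-v_j)f(v_j)\to f(0)=0$ in front of $|\beta_j'|$ degenerates; symmetrically, it gives no control on the variation of $v_j$ where $v_j$ is close to $1$ (for $\lambda=0$ the weight in front of $|v_j'|$ is just $1-v_j$). Your reparametrization by regularized arc length does not repair this: after rescaling back to $(-\tfrac12,\tfrac12)$ the Lipschitz constant is the total (regularized) length, which is exactly the quantity that is unbounded. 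So the extraction of a limiting competitor $(\beta_*,v_*)\in\calU^{1/2}[R^-,R^+]$ is not justified, and without it the lower-semicontinuity step has nothing to act on.

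The paper circumvents compactness entirely, and its two devices are what your proposal is missing. First, a dichotomy: if $\liminf_j\min_I v_j=0$, then the term $\int(1-v_j)|v_j'|\,\dt$ alone forces $\limsup_j H_{j,\lambda}(\beta_j,v_j;I)\ge 1\ge g^*_\lambda(R^-,R^+)$ (by Proposition~\ref{prop:proprietag}\ref{prop:proprietag:upperbound}), so this degenerate case is settled without any limit of the $(\beta_j,v_j)$. Second, in the remaining case $\min_I v_j\ge c>0$ one never passes to a limit curve: one truncates $v_j^\eta:=v_j\wedge(1-\eta)$, uses each $(\beta_j,v_j^\eta)$ itself (suitably extended to $(-1,1)$) as a competitor for $g^*_\lambda$ via reparametrization invariance of $H_\lambda$, and compares the integrands of $H_\lambda(\beta_j,v_j^\eta;\cdot)$ and $H_{j,\lambda}(\beta_j,v_j;\cdot)$ pointwise on the three sets $\{v_j\le 1-\eta\}$, $\{v_j>1-\eta,\ h_j(v_j)=f(v_j)\}$, $\{v_j>1-\eta,\ h_j(v_j)<f(v_j)\}$, with multiplicative errors that vanish as $j\to\infty$ and then $\eta\to0$. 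If you want to keep your compactness-based route you would at minimum have to incorporate the same dichotomy and the same truncation of $v_j$ near $1$, at which point you are essentially reconstructing the paper's comparison argument; as written, the proposal has a genuine gap.
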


\begin{proof}
The proof follows closely that of \cite[Proposition~7.3 (iii)]{CFI14}.
We sketch here the main steps in the case $\lambda \in [0,+\infty)$, being the case $\lambda=+\infty$ obtained by  taking the supremum over $\lambda>0$.
For fixed $ R^-,R^+ \in \On$, for any Borel set $E\subset \R$,
we consider the functionals defined in $H^1(J;\Mno) \times  H^1(J;[0,1])$ for any interval $J$ that contains $E$ by
\begin{equation}\label{eqdefGjlambda}
\begin{split}
H_{j,\lambda} (\beta,v;E) &:=  \int_{E}  \sqrt{(1-v)^2 + 4\lambda \, \dist^2(\beta, \On)} \,\sqrt{h_j^2(v) |\beta'|^2+|v'|^2} \dt,
	\\
H_{\lambda} (\beta,v;E)& :=  \int_{E}  \sqrt{(1-v)^2 + 4\lambda \, \dist^2(\beta, \On)} \,\sqrt{f^2(v) |\beta'|^2+|v'|^2} \dt,
\end{split}
\end{equation}
so that %, after translating from $(0,1)$ to $I=(-1/2,1/2)$,
\[
{g}_{j,\lambda}( R^-,R^+) = \inf_{(\beta,v)\in \calU^{1/2}[ R^-,R^+]}H_{j,\lambda}(\beta,v,I) , \qquad
{g}_{\lambda}^*( R^-,R^+) = \inf_{(\beta,v)\in\calU^{1/2}[ R^-,R^+]}H_{\lambda}(\beta,v,I).
	\]
First we note that by \eqref{0203241030} and Prop.~\ref{prop:proprietag}\ref{prop:proprietag:upperbound},
$$
{g}_{j,\lambda}( R^-,R^+)\leq {g}_{j+1,\lambda}( R^-,R^+)\leq g^*_\lambda( R^-,R^+)\leq 1,
$$
and hence we have to show that
\begin{equation}
	\label{eq-j-converse-ineq}
	{g}_{\lambda}( R^-,R^+)\leq \sup_j {g}_{j,\lambda}( R^-,R^+).
\end{equation}
	For any  $j\in\N$, let $(\beta_j,v_j) \in \calU^{1/2}[ R^-,R^+]$ be such that
	\begin{equation}\label{f:approx1}
		H_{j,\lambda} (\beta_j,v_j;I) \leq {g}_{j,\lambda}( R^-,R^+) +\frac 1j.
	\end{equation}
	If $\liminf_j \min_{[0,1]}v_j=0$, then
	the same argument as in \eqref{eqivlader} easily gives
	\begin{equation*}
\label{eq-converse-easy}
\limsup_{j\to +\infty} H_{j,\lambda}(\beta_j,v_j;I)\geq 1
	\end{equation*}
which concludes the proof.

Assume now that $\liminf_j \min_{[0,1]}v_j>0$ and pick $0<\eta<\liminf_j \min_{[0,1]}v_j$. We define
$v_j^\eta:=v_j\wedge (1-\eta)$. Extending $\beta_j$ constant outside $I$ and interpolating $v_j^\eta$ linearly in order to achieve the boundary condition $1$ in the interval $(-1,1)$, and then using the invariance under reparametrization of $H_{\lambda}$, we immediately obtain that
	\begin{equation}\label{eq-estimate-reparametrization}
		g_{\lambda}(R^+,R^-)\leq
				g^*_{\lambda}(R^+,R^-)\leq
				H_{\lambda} (\beta_j,v^\eta_j;(-1,1))\leq  H_{\lambda} (\beta_j,v^\eta_j;I)+ \eta^2.
	\end{equation}
	We subdivide $I$ in three subsets:
	\begin{equation}
		\label{eq-subsets}
		\begin{split}
			&A_j:=\{t\in I:\ v_j(t)\leq 1-\eta\},\\
			&B_j:=\{t\in I:\ 1-\eta<v_j(t)\leq 1\,,\ h_j(v_j(t))=f(v_j(t))\},\\
			&C_j:=\{t\in I:\ 1-\eta<v_j(t)\leq 1\,,\ h_j(v_j(t))<f(v_j(t))\}.\\
		\end{split}
	\end{equation}
We start with $A_j$. Here $v_j=v_j^\eta$ and since for $j$ sufficiently large $h_j=f$ in $[\eta,1-\eta]$, \begin{equation}
	\label{eq-estimate-Aj}
	H_{\lambda}(\beta_j,v_j^\eta;A_j)=H_{j,\lambda}(\beta_j,v_j;A_j)\qquad \hbox{for all $j$ large enough}.
\end{equation}
In order to treat the second set we use the same argument as in \eqref{eq-approx-f} and \eqref{eq-estimate-truncation} and obtain
\begin{equation}
	\label{eq-estimate-Bj}
H_{\lambda}(\beta_j,v_j^\eta;B_j)\leq \frac{\ell+\ol \omega(\eta)}{\ell-\ol \omega(\eta)}H_{j,\lambda}(\beta_j,v_j;B_j)\qquad \hbox{for all $j$}, \qquad \ol\omega(\eta) \to 0\ \text{as}\ \eta \to 0.
\end{equation}
Finally we observe that if $t\in C_j$ then $(1-v_j(t))h_j(v_j(t))= jv_j(t)\geq j(1-\eta)$ and therefore
\begin{equation*}
(1-v_j(t))h_j(v_j(t))\geq \eta f(1-\eta)=(1-v_j^\eta)f(v_j^\eta)
\end{equation*}
for all $j\geq \frac {\eta f(1-\eta)}{1-\eta}$. Using the same argument as for $B_j$ we conclude that
\begin{equation}
	\label{eq-estimate-Cj}
H_{\lambda}(\beta_j,v_j^\eta;C_j)\leq
H_{j,\lambda}(\beta_j,v_j;C_j)\qquad \hbox{for all $j\geq \frac {\eta f(1-\eta)}{1-\eta}$}.
\end{equation}
Then \eqref{eq-j-converse-ineq} follows by  \eqref{eq-estimate-reparametrization}, \eqref{eq-estimate-Aj}, \eqref{eq-estimate-Bj} \eqref{eq-estimate-Cj}, and \eqref{f:approx1}, taking first the limit $j\to\infty$ and then $\eta\to0$.
\end{proof}

Next, we show that in the definition of the energies $g_{j,\lambda}$ one can also consider test functions such that the boundary conditions are fulfilled only asymptotically.

\begin{lemma}\label{lemmalbGjlambdaboundaryvalues}
Fix $j\in\N$, $\lambda\in[0,+\infty)$, $R^+$, $ R^- \in \On$. Consider a sequence $(\beta_k,v_k)\in H^1(I;\Mno\times[0,1])$
such that
\begin{equation}
\beta_k \toG R^-\chi_{(-\frac12,0)}+
R^+\chi_{(0,-\frac12)} 
\text{ and } v_k\to 1 
\text{ in } L^1.
\end{equation}
Then the functional $H_{j,\lambda}$ defined in \eqref{eqdefGjlambda}  obeys
\begin{equation}
\min_{G^+,G^-\in\G}
g_{j,\lambda}(G^-R^-,G^+R^+)
\le \liminf_{k\to\infty}  H_{j,\lambda}(\beta_k,v_k;I).
\end{equation}
\end{lemma}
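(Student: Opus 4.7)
The plan is to use $(\beta_k, v_k)$, suitably restricted and reparametrized on a large central subinterval of $I$, as the bulk of a competitor for $g_{j,\lambda}(G^- R^-, G^+ R^+)$, then append short boundary layers that enforce the exact endpoint conditions $(G^\pm R^\pm, 1)$ with $o(1)$ additional cost. Assuming without loss of generality that $\liminf_k H_{j,\lambda}(\beta_k, v_k; I) < +\infty$, I first select good points: since $v_k \to 1$ in $L^1(I)$ and $[\beta_k] \to [R^-]\chi_{(-1/2,0)} + [R^+]\chi_{(0,1/2)}$ in $L^1(I;\Mno/\G)$, a Chebyshev argument on shrinking neighborhoods of $\pm 1/2$ yields $a_k \to -1/2$ and $b_k \to 1/2$ with $v_k(a_k), v_k(b_k) \to 1$ and $\dG([\beta_k(a_k)], [R^-]), \dG([\beta_k(b_k)], [R^+]) \to 0$. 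Since $\G$ is finite, along a subsequence we fix $G^\pm \in \G$ so that $\mu_k^- := |\beta_k(a_k) - G^- R^-|$ and $\mu_k^+ := |\beta_k(b_k) - G^+ R^+|$ vanish; setting $\delta_k^- := \dist(\beta_k(a_k), \On)$ and $\delta_k^+ := \dist(\beta_k(b_k), \On)$, then $\delta_k^\pm \le \mu_k^\pm \to 0$.

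Reparametrize $(\beta_k, v_k)|_{[a_k, b_k]}$ affinely onto $[-1/2 + L_k^-, 1/2 - L_k^+]$ with $L_k^\pm \to 0$; by the reparametrization invariance of the length-type integrand of $H_{j,\lambda}$, this piece carries energy $H_{j,\lambda}(\beta_k, v_k; [a_k, b_k]) \le H_{j,\lambda}(\beta_k, v_k; I)$. On the left boundary layer I connect $(G^- R^-, 1)$ continuously to $(\beta_k(a_k), v_k(a_k))$ by a curve decomposed into four reparametrizable pieces: (i) $v$ decreases from $1$ to $v_k^* := 1 - (\mu_k^-)^{2/3}$ with $\beta \equiv G^- R^-$; (ii) $v \equiv v_k^*$ while $\beta$ traverses a geodesic in $\On$ from $G^- R^-$ to a nearest point $\widehat\beta_k \in \On$ with $|\widehat\beta_k - \beta_k(a_k)| = \delta_k^-$ (well-defined for $k$ large by Lemma~\ref{lemmaprojOn}); (iii) $v \equiv v_k^*$ while $\beta$ moves along the linear segment from $\widehat\beta_k$ to $\beta_k(a_k)$; (iv) $v$ interpolates monotonically from $v_k^*$ to $v_k(a_k)$ with $\beta \equiv \beta_k(a_k)$. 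An analogous layer is placed on the right, producing $(\tilde\beta_k, \tilde v_k) \in \calU^{1/2}[G^- R^-, G^+ R^+]$.

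The key estimate is that each piece of the boundary layer contributes $o(1)$. Writing $w_k := 1 - v_k(a_k) \to 0$: piece (i) yields $(1 - v_k^*)^2/2 = (\mu_k^-)^{4/3}/2$; piece (ii) uses $(1 - v_k^*) h_j(v_k^*) \le j v_k^* \le j$ together with the bilipschitz length control of geodesics in $\On$, giving at most $C j (\mu_k^- + \delta_k^-)$; piece (iii), using $h_j(v_k^*) \le j/(1 - v_k^*) = j (\mu_k^-)^{-2/3}$, $\dist(\beta, \On) \le s \delta_k^-$ along the segment and $|\beta'| = \delta_k^-$, yields at most $j \delta_k^- + j \sqrt{\lambda}\, (\delta_k^-)^2/(\mu_k^-)^{2/3} \le j \mu_k^- + j \sqrt{\lambda}\, (\mu_k^-)^{4/3}$; piece (iv) contributes $O(w_k^2 + (\mu_k^-)^{2/3} w_k + (\mu_k^-)^{4/3})$ since $(1 - v) \le \max(1 - v_k^*, w_k)$ throughout. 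All terms vanish as $k \to \infty$, whence
\begin{equation*}
g_{j,\lambda}(G^- R^-, G^+ R^+) \le H_{j,\lambda}(\tilde\beta_k, \tilde v_k; I) \le H_{j,\lambda}(\beta_k, v_k; I) + o(1);
\end{equation*}
taking $\liminf_k$ and then $\min_{G^\pm \in \G}$ on the left concludes the proof.

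The main obstacle is the estimate in piece (iii): a naive linear interpolation of $\beta$ from $G^- R^-$ directly to $\beta_k(a_k)$ at constant $v = v_k(a_k)$ introduces a spurious $(\mu_k^-)^2/(1 - v_k(a_k))$ contribution that need not vanish, since $v_k(a_k)$ may approach $1$ arbitrarily faster than $\mu_k^-$. The resolution combines two ingredients: routing the $\beta$-interpolation through the nearest-point projection $\widehat\beta_k \in \On$, so that only $\delta_k^-$ (and not $\mu_k^-$) enters the $\dist^2$-weighted contribution of the off-$\On$ segment; and choosing the plateau $v_k^* = 1 - (\mu_k^-)^{2/3}$, which (up to constants) minimizes $(1 - v^*)^2 + (\mu_k^-)^2/(1 - v^*)$ and thereby balances the Modica--Mortola cost of lowering $v$ against the $h_j(v^*) \sim (1 - v^*)^{-1}$ blow-up.
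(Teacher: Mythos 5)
Your proof is correct, and it rests on the same core mechanism as the paper's: use $(\beta_k,v_k)$ as the bulk of a competitor for $g_{j,\lambda}$ and append boundary layers whose cost is $O(j\cdot\text{mismatch})$, which works precisely because the truncated prefactor satisfies $(1-v)h_j(v)\le jv\le j$. The implementations differ, though. The paper picks a single $z\in(\tfrac14,\tfrac12)$ with $\alpha_k:=|G_k^+R^+-\beta_k(z)|+|G_k^-R^--\beta_k(-z)|+|1-v_k(\pm z)|\to0$, truncates $\widehat v_k:=v_k\wedge(1-\alpha_k)$ (checking via the explicit form $h_j(s)=js/(1-s)$ near $s=1$ that this does not increase $H_{j,\lambda}$ on $(-z,z)$ and makes the junction values exactly $1-\alpha_k$), and then interpolates $\beta$ \emph{affinely} to $G_k^\pm R^\pm$ while holding $v$ at $1-\alpha_k$; the $\lambda$-term costs only $\sqrt{4\lambda}\,\alpha_k\cdot h_j(1-\alpha_k)\cdot O(\alpha_k)=O(j\sqrt{\lambda}\,\alpha_k)$, so no detour through $\On$ is needed. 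Your two refinements — routing the interpolation through the nearest-point projection onto $\On$, and the plateau level $1-v^*=(\mu_k^-)^{2/3}$ — are therefore unnecessary (with the plateau $1-v^*=\mu_k^-$ the direct segment already gives $j\mu_k^-+O(j\sqrt\lambda\,\mu_k^-)\to0$, and even with your $2/3$ exponent the direct segment costs $j\mu_k^-+O(j\sqrt\lambda\,(\mu_k^-)^{4/3})$), but they are not wrong; your piece (iv) plays the role of the paper's truncation in matching the phase-field value at the junction. Two small points to tighten: pass first to a subsequence realizing the $\liminf$ before extracting the constant $G^\pm$ (otherwise the minimum over $\G$ on the left is not justified), and justify that $G^-R^-$ and $\widehat\beta_k$ lie in the same connected component of $\On$ for large $k$ (which follows from $|G^-R^--\widehat\beta_k|\le\mu_k^-+\delta_k^-\to0$) so that the path in $\On$ of comparable length exists, e.g.\ via Lemma~\ref{lemmaprojOn} applied to the joining segment.
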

\begin{proof}
As usual, in order to interpolate the field $\beta_k$ we need to move $v_k$ away from 1.
Possibly passing to a subsequence, we have pointwise convergence almost everywhere, therefore we can choose $z\in (\frac14,\frac12)$ and $G_k^\pm\in\G$ such that
\begin{equation}
\alpha_k:= |G_k^+R^+-\beta_k(z)|
+|G_k^-R^--\beta_k(-z)|
+|1-v_k(z)|+
|1-v_k(-z)|
\to0;
\end{equation}
for $k$ sufficiently  large we have $h_j(s)=js/(1-s)$ for all $s\in [1-\alpha_k,1)$. Therefore
on the set $\{v_k\ge 1-\alpha_k\}$, the integrand in
the functional
 $H_{j,\lambda}(v_k,\beta_k;I)$ can be rewritten as
\begin{equation}
\sqrt{1+ \frac{4\lambda}{(1-v_k)^2}\, \dist^2(\beta_k, \On)} \,\sqrt{j^2 v_k^2 |\beta_k'|^2+(1-v_k)^2|v_k'|^2}
\end{equation}
and, setting
$\widehat v_k:=v_k\wedge (1-\alpha_k)$, one obtains
\begin{equation}
H_{j,\lambda}(\beta_k,\widehat v_k;(-z,z))
 \le H_{j,\lambda}(\beta_k,v_k;(-z,z)).
\end{equation}
We extend $\widehat v_k$ to equal $1-\alpha_k$ on the rest of $[-1,1]$, to be one
outside $(-2,2)$, and to be the affine interpolation in-between.
We then extend $\beta_k$ so that $\beta_k=G_k^+R^+$
on $(1,\infty)$, $\beta_k=G_k^-R^-$ on $(-\infty,-1)$, and the affine interpolation in $(-1,-z)$ and $(z,1)$. We
observe that both functions are in $H^1$ on $(-2,2)$.
We compute
\begin{equation}
 H_{j,\lambda}(\beta_k,\widehat v_k;(z,1))
 \le (1-z) \sqrt{\alpha_k^2+4\lambda\alpha_k^2}
 \sqrt{\frac{j^2}{\alpha_k^2} 4\alpha_k^2}
 = 2 j \alpha_k \sqrt{1+4\lambda},
\end{equation}
\begin{equation}
 H_{j,\lambda}(\beta_k,\widehat v_k;(1,2))
 =\int_1^2 \alpha_k (2-t) \alpha_k\dt = \frac12 \alpha_k^2,
\end{equation}
and the same on the other side. Since $H_{j,\lambda}$ is invariant under reparametrizations we obtain
\begin{equation}
 g_{j,\lambda}(G_k^-R^-,G_k^+R^+)
 \le H_{j,\lambda} (\beta_k,\widehat v_k; (-2,2))
 \le  H_{j,\lambda}(v_k,\beta_k;I)
 +\alpha_k^2 + 4j\alpha_k\sqrt{1+4\lambda}
\end{equation}
for all $k$ sufficiently large;
 taking the limit $k\to\infty$ the proof is concluded.
\end{proof}

We conclude with a lower bound for the one-dimensional functionals obtained from the blow-up.

\begin{lemma}\label{l:lb1d}
	Let 
	%$\lambda\in[0+\infty)$, and $ R^-,R^+\in\On$. For any sequences $M_k\to +\infty$, $\e_k\to 0$, $\delta_k\to0$ such that
	$\e_k/\delta_k \to \lambda\in [0,+\infty]$,
	$M_k\to +\infty$,
	and assume that $(\beta_k,v_k)\in H^1(I;\Mno \times [0,1])$, $I=[-1/2,1/2]$, obeys
	\[
	\beta_k \to  R^-\chi_{[-1/2,0)}+R^+\chi_{[0,1/2]}\ \text{in} \ \LG(\Omega,\Mno), \quad 
	v_k \to 1 \ \text{in} \ L^1(I)
	\]
	for some $ R^-,R^+\in\On$.
	 Then
	\begin{equation*}\label{eq-lb-1d}\begin{split}
&\min_{G^+,G^-\in\G}
g_{j,\lambda}(G^-R^-,G^+R^+)\\
&\le
 \liminf_{k\to +\infty}
\int_{{I} } ( M^2_k\wedge {\e_k}f^2(v_k) )\,|\beta'_k|^2 \dt + \mathrm{PF}_{\e_k}(v_k;I) +
\frac{1}{\delta_k} \int_{{I} }\dist^2(\beta_k, \On)  \dt .
\end{split}\end{equation*}
\end{lemma}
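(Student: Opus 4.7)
The plan is to bound the integrated energy from below by the one‑homogeneous functional $H_{j,\lambda_k}$ of \eqref{eqdefGjlambda} applied to a suitable truncation of $v_k$, and then to invoke Lemma~\ref{lemmalbGjlambdaboundaryvalues}. Denoting by $I_k$ the integrand on the right-hand side of the claim, we may assume without loss of generality that $\liminf_k\int_I I_k\dt<+\infty$, so that along a subsequence $\int_I(1-v_k)^2\dt=O(\eps_k)$ and $\int_I\dist^2(\beta_k,\On)\dt=O(\delta_k)$; in particular $v_k\to 1$ in $L^2(I)$. The essential novelty with respect to the analogous one-dimensional estimate in \cite{CFI14,ConFocIur25} is the $M_k$-truncation of the prefactor $f_{\eps_k}(v)=\sqrt{\eps_k}f(v)\wedge M_k$, which must be dealt with at this level.

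The key idea is to choose the threshold $s_k\in(0,1)$ as the unique (for $k$ large) solution of $\sqrt{\eps_k}f(s_k)=M_k$, so that $f_{\eps_k}$ equals $\sqrt{\eps_k}f$ on $\{v\le s_k\}$ and the constant $M_k$ on $\{v\ge s_k\}$. By the Read--Shockley asymptotics \eqref{0405230818}, $1-s_k\sim\ell\sqrt{\eps_k}|\log(\sqrt{\eps_k}/M_k)|/M_k$, which yields $s_k\to 1$, $(1-s_k)h_j(s_k)\le j$, and $h_j(s_k)=o(M_k/\sqrt{\eps_k})$. I set $\tv_k:=v_k\wedge s_k\in H^1(I;[0,1])$, which still satisfies $\tv_k\to 1$ in $L^1(I)$. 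On the transition set $T_k:=\{v_k\le s_k\}$ one has $\tv_k=v_k$ and $f_{\eps_k}^2(v_k)/\eps_k=f^2(v_k)\ge h_j^2(\tv_k)$, so Young's inequality applied to the decomposition $I_k=A_k+B_k$ with $A_k:=f_{\eps_k}^2(v_k)|\beta'_k|^2+\eps_k|v'_k|^2$ and $B_k:=(1-v_k)^2/(4\eps_k)+\dist^2(\beta_k,\On)/\delta_k$ gives the pointwise bound $I_k\ge 2\sqrt{A_kB_k}\ge$ the integrand of $H_{j,\lambda_k}(\beta_k,\tv_k;\cdot)$, whence $\int_{T_k}I_k\dt\ge H_{j,\lambda_k}(\beta_k,\tv_k;T_k)$.

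The hard part will be controlling the contribution on the steady set $I\setminus T_k=\{v_k>s_k\}$, where $\tv_k\equiv s_k$, $\tv'_k=0$, and the $H_{j,\lambda_k}$-integrand reduces to $h_j(s_k)|\beta'_k|\sqrt{(1-s_k)^2+4\lambda_k\dist^2(\beta_k,\On)}$. The crucial observation is that by the very definition of $s_k$ one has $f_{\eps_k}(v_k)=M_k$ on $I\setminus T_k$, so $\|\beta'_k\|_{L^2(I\setminus T_k)}=O(1/M_k)$. A subadditive splitting of the square root then reduces the matter to estimating $(1-s_k)h_j(s_k)\int_{I\setminus T_k}|\beta'_k|\dt$ and $2h_j(s_k)\sqrt{\lambda_k}\int_{I\setminus T_k}\dist(\beta_k,\On)|\beta'_k|\dt$; Cauchy--Schwarz together with the $L^2$ bounds above, the identity $\lambda_k\delta_k=\eps_k$, and the scaling $h_j(s_k)=o(M_k/\sqrt{\eps_k})$ shows both contributions to be $o(1)$. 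Combining, $\int_I I_k\dt\ge H_{j,\lambda_k}(\beta_k,\tv_k;I)-o(1)$.

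To conclude, I pass to the limit in $\lambda_k$ by monotonicity of $H_{j,\lambda}$ in $\lambda$: for any fixed $\lambda'<\lambda$ (in the case $\lambda<\infty$), $H_{j,\lambda_k}\ge H_{j,\lambda'}$ for $k$ large, and Lemma~\ref{lemmalbGjlambdaboundaryvalues} applies to $(\beta_k,\tv_k)$ since $\tv_k\to 1$ in $L^1$ and $\beta_k\toG R^-\chi_{(-1/2,0)}+R^+\chi_{(0,1/2)}$, yielding $\liminf_k H_{j,\lambda'}(\beta_k,\tv_k;I)\ge\min_{G^\pm\in\G}g_{j,\lambda'}(G^-R^-,G^+R^+)$. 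Sending $\lambda'\nearrow\lambda$ and using the left-continuity of $\lambda\mapsto g_{j,\lambda}$ (which follows by monotone convergence applied to any near-minimizing pair) concludes the case $\lambda<\infty$; for $\lambda=\infty$ I instead take $\lambda'$ arbitrarily large and invoke $g_{j,\infty}=\sup_{\lambda'>0}g_{j,\lambda'}$, which commutes with the finite minimum over $\G$.
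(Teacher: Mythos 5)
Your proof is correct in its overall architecture and reaches the paper's conclusion, but it handles the $M_k$-truncation by a genuinely different device. The paper truncates $v_k$ at the level $\gamma_k=1-j\sqrt{\eps_k}/M_k$ dictated by $h_j$ alone, so that $\eps_k h_j^2(\widetilde v_k)\le M_k^2\wedge \eps_k h_j^2(v_k)$ pointwise, and pays only the additive error $j^2/(4M_k^2)$ coming from the $(1-v_k)^2/(4\eps_k)$ term on $\{v_k>\gamma_k\}$; no information about $\beta_k'$ on the truncated set is needed. You instead truncate at the crossover level $s_k$ of $f_{\eps_k}$ itself and control the residual $H_{j,\lambda_k}$-contribution on $\{v_k>s_k\}$ by exploiting that $f_{\eps_k}(v_k)=M_k$ there, hence $\|\beta_k'\|_{L^2(\{v_k>s_k\})}=O(1/M_k)$, together with $(1-s_k)h_j(s_k)\le j$ and $h_j(s_k)=o(M_k/\sqrt{\eps_k})$ from \eqref{0405230818}. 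Both routes work; the paper's is shorter because it never needs the Read--Shockley asymptotics at this stage, while yours makes transparent why the truncated region is energetically negligible. Your Young-inequality step on $T_k$ and the invocation of Lemma~\ref{lemmalbGjlambdaboundaryvalues} match the paper.

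There is, however, one step whose justification as written does not hold up: the left-continuity of $\lambda\mapsto g_{j,\lambda}$. Monotone (or dominated) convergence applied to a fixed near-minimizing pair only shows $H_{j,\lambda'}(\beta,v)\to H_{j,\lambda}(\beta,v)$, which yields $\sup_{\lambda'<\lambda}g_{j,\lambda'}\le g_{j,\lambda}$ — the direction already implied by monotonicity and useless here. The inequality you actually need, $\sup_{\lambda'<\lambda}g_{j,\lambda'}\ge g_{j,\lambda}$, is a lower-semicontinuity statement for an infimum and would require a uniform bound on $\int\sqrt{h_j^2|\beta'|^2+|v'|^2}$ along near-minimizers, which is not available (the integrand's prefactor can vanish). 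The fact is nonetheless true, via the elementary pointwise bound $\sqrt{(1-v)^2+4\lambda'\dist^2(\beta,\On)}\ge\sqrt{\lambda'/\lambda}\,\sqrt{(1-v)^2+4\lambda\dist^2(\beta,\On)}$ for $0<\lambda'\le\lambda$, giving $g_{j,\lambda'}\ge\sqrt{\lambda'/\lambda}\,g_{j,\lambda}$; this is exactly the $\rho_k^{1/2}$ factor the paper applies directly to $H_{j,\lambda_k}$ versus $H_{j,\lambda}$, thereby avoiding any continuity-in-$\lambda$ discussion. With that substitution (and noting that the case $\lambda=0$, not covered by "$\lambda'<\lambda$", follows trivially from $H_{j,\lambda_k}\ge H_{j,0}$), your argument is complete.
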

\begin{proof}
Let us first assume $\lambda < +\infty$. For $h_j$ defined in \eqref{0203241030}, we have 
	\begin{equation}\label{f:estlb1d1}
		\int_{{I} }  ( M^2_k\wedge {\e_k}f^2(v_k) )\,|\beta'_k|^2 \dt
		\geq \int_{I} 
		( M^2_k \wedge  \e_k h_j^2(v_k))\,| \beta_k'|^2 \dt.
	\end{equation}
Choose $\gamma_k:=1-j \sqrt{\eps_k}/M_k$
and define $\widetilde v_k:=v_k\wedge (1-\gamma_k)$.
Then
$\eps_k^{1/2} h_j(\gamma_k)
\le \eps_k^{1/2} j\gamma_k /(1-\gamma_k)\le
M_k$ and
\begin{equation}\label{f:estlb1d2}
		\begin{split}
			\int_{I} &
			(M^2_k  \wedge \e_k h_j^2(v_k))\,| \beta_k'|^2 + \frac{(1-v_k)^2}{4\e_k}+\e_k | v_\varepsilon'|^2 \dt
			\\
			& \geq \int_{I}  \e_k h_j^2(\tv_k)
			\,| \beta'_k|^2 + \frac{(1-\tv_k)^2}{4\e_k} + \e_k | \tv_k'|^2  \dt
			- \int_{\{v_k >\gamma_k\}} 	\frac{(1-\gamma_k)^2}{4\e_k} \dt
			\\
			& \geq \int_{I}  \e_k h_j^2(\tv_k)
			\,| \beta'_k|^2 + \frac{(1-\tv_k)^2}{4\e_k} + \e_k | \tv_k'|^2  \dt
			- \frac{j^2}{4M_k^2} .
		\end{split}
	\end{equation}
Therefore
\begin{equation}\begin{split}\label{0202261331}
 &	\int_{{I} } ( M^2_k\wedge {\e_k}f^2(v_k) )\,|\beta'_k|^2 \dt + \AT_{\e_k}(v_k;I) +
\frac{1}{\delta_k} \int_{{I} }\dist^2(\beta_k, \On)  \dt \\
&\ge
\int_I
 \sqrt{ (1-\tv_k)^2
 + 4 \frac{\eps_k}{\delta_k} \dist^2(\beta_k, \On)}
 \sqrt{h_j^2(\tv_k)|\beta_k'|^2 + |\tv_k'|^2} \dt
-\frac{j^2}{4M_k^2}.
 \end{split}\end{equation}
The last term tends to zero for $k\to\infty$. In order to estimate the integral, we distinguish three cases.
 If $\lambda=0$,
the integral on right-hand side of this equation is larger than or equal to $H_{j,0}(\beta_k,\tv_k)$ and we conclude using Lemma~\ref{lemmalbGjlambdaboundaryvalues}.
If $\lambda\in(0,\infty)$, the same integral is larger than or equal to
$\rho_k^{1/2}H_{j,\lambda}(\beta_k,\tv_k)$, where
$\rho_k:=1\wedge {\frac{\eps_k}{\lambda\delta_k}}\to1$,
and the conclusion follows similarly.
Indeed, for any $a,b\ge0$,
\begin{equation*}
 \rho_k^{1/2}\sqrt{a+\lambda b}
 =\sqrt{\rho_k a + \rho_k\lambda b}
 \le \sqrt{ a + \frac{\eps_k}{\delta_k} b}.
\end{equation*}
It remains to treat the case $\eps_k/\delta_k\to\infty$. Fix $\lambda'>1$.
For $k$ sufficiently large, we have
$\eps_k/\delta_k\ge\lambda'$ and therefore
the integral in \eqref{0202261331} is larger or equal than $H_{j,\lambda'}(\beta_k, \tv_k)$.
The conclusion now follows by taking the supremum in $\lambda'$, in view of \eqref{gbarinfj} (and recalling again \eqref{gbarlaj}).
%We provide a sketch below. 
% \EEE
\end{proof}

We are ready to prove the lower bound, by blow--up and  slicing.

\begin{proof}[Proof of Theorem \ref{th-lb}]
	If $ \liminf_{\varepsilon\to 0} \tF_\varepsilon(\uu_\e, v_\e)=+\infty$ we have nothing to prove.
	On the other hand, if
$ \liminf_{\varepsilon\to 0} \tF_\varepsilon(\uu_\e, v_\e)<+\infty$
	by Theorem \ref{t:comp1} we may assume, possibly passing to a subsequence and with a small abuse of notation, that $v=1$ a.e.\ in $\Omega$, $\uu \in \PC(\Omega;\On/\G)$, and
	\[
	\liminf_{\varepsilon\to 0} \tF_\varepsilon(\uu_\e, v_\e)= \lim_{\varepsilon\to 0} \tF_\varepsilon(\uu_\e, v_\e).
	\]
	We define the nonnegative Radon measures
	\begin{equation}\label{2002242111}
		\mu_\varepsilon:=\left( f_\varepsilon^2(v_\varepsilon) \,|\nabla \uu_\e|^2 + \frac{(1-v_\varepsilon)^2}{4\varepsilon}+\varepsilon |\nabla v_\varepsilon|^2 + \frac{1}{\delta_\varepsilon} \dG^2(\uu_\varepsilon, \On/\G) \right) \mathcal{L}^N ,
	\end{equation}
	so that $\tF_\varepsilon(\uu_\e, v_\varepsilon)=\mu_\varepsilon(\Omega)$, and the uniform bound for the energies corresponds to a uniform bound for the total variation of the measures $\mu_\e$,   and hence there exists a nonnegative Radon measure $\mu$ in $\Omega$ which is the weak*--limit of $\mu_\e$ (up to extracting a further subsequence, not relabeled). By the lower semicontinuity of the  total variation with respect to the weak* convergence of measures, we have that
	\[
	\mu(\Omega)\leq \liminf_{\varepsilon\to 0} \mu_\e (\Omega)= \lim_{\varepsilon\to 0} \tF_\varepsilon(\uu_\e, v_\varepsilon),
	\]
	and therefore the result is proved once we show that
	\begin{equation}\label{f:lob}
		\mu(\Omega) \geq	\tF^\lambda(\uu,1) ,
	\end{equation}
	or, equivalently, that for $\calH^{N-1}$-a.e. $x_0\in J_u$ it holds
	\begin{equation}\label{f:lob2}
		\frac{\di\mu}{\di(\calH^{N-1}\LL J_u)}(x_0) \geq g_\lambda (R^-,R^+)
	\end{equation}
where $R^\pm\in u^\pm(x_0)$ are any representatives of the equivalence class, the choice being irrelevant by the invariance of $g_\lambda$ under the separate action of $\G$ on its two arguments.
	
	In the following, we  prove the last inequality for $x_0\in J_u$ such that the limit
	\begin{equation}\label{f:goodp1}
	\lim_{\rho \to 0^+} \frac{\mu(Q^\nu_\rho(x_0))}{\calH^{N-1}(J_u \cap  Q^\nu_\rho(x_0))}=	\frac{\di\mu}{\di(\calH^{N-1}\LL J_u)}(x_0)
	\end{equation}
	exists finite, and
	\begin{equation}\label{f:goodp2}
		\lim_{\rho \to 0^+} \frac{\calH^{N-1}(J_u \cap  Q^\nu_\rho(x_0))} {\rho^{N-1}}=1,
	\end{equation}
	where $\nu:=\nu_u(x_0)$, and $Q^\nu_\rho(x_0)=x_0+\rho Q^\nu$ is the cube centered in $x_0$ with a face orthogonal to $\nu$ and side length  $\rho$.
	
	By \cite[Theorem 2.3]{Amb90SBVX}, $J_u$ is a $\calH^{N-1}$ countably rectifiable set, and hence \eqref{f:goodp1} and \eqref{f:goodp2} are satisfied $\calH^{N-1}$-a.e in $J_u$.
	Fixed $x_0$ as above,
	by weak$^*$--convergence of measures, we have that $\lim_{\e \to 0}\mu_\e (Q^\nu_\rho(x_0))= \mu (Q^\nu_\rho(x_0))$ for every $\rho$ such that $\mu(\partial Q^\nu_\rho(x_0))=0$. Then, by diagonalization, we can find sequences
	$\rho_k\to0$,  $\e_k\to0$ 
	with $\e_k/\rho_k\to0$,
$\delta_{\e_k}/\rho_k\to0$ and
	such that, setting  $u_k:=u_{\e_k}$, $v_k:=v_{\e_k}$, it holds
	\[
	\frac{\di\mu}{\di(\calH^{N-1}\LL J_u)}(x_0)= \lim_{k\to +\infty}\frac{\mu_{\e_k}(Q^\nu_{\rho_k}(x_0))}{\rho_k^{N-1}}=
	\lim_{k\to +\infty} \frac 1{\rho_k^{N-1}}\, \tF_{\e_k} (u_k,v_k;  Q^\nu_{\rho_k}(x_0))
	\]
	and
	\begin{equation*}
\rho_k^{-N}\| \dG(u_k,[\overline R](\cdot+x_0)) + |v_k-1| \, \|_{L^1(Q^\nu_{\rho_k}(x_0))}\to0,
\end{equation*}
	where
\[
\overline R:= R^- \chi_{\{x\cdot \nu <0\} }+ R^+ \chi_{\{x\cdot \nu >0\}}.
\]

	Changing variable to $y:=(x-x_0)/\rho_k\in Q^\nu$ in the integrals defining $\tF_{\e_k}  (u_k,v_k;  Q^\nu_{\rho_k}(x_0))$, and denoting as usual for  the rescaled functions  by
	\[
	\widehat\uu_k(y):=\uu_k (x_0+\rho y), \quad \widehat v_k(y):=v_k (x_0+\rho y), \qquad y\in Q^\nu,
	\]
	we get
$ \dG(\widehat u_k,[\overline R])\to0$,
$\widehat v_k\to1$ in $L^1(Q^\nu)$, and
	\[
	\begin{split}
		\frac 1{\rho_k^{N-1}}\, & \tF_{\e_k}  (u_k,v_k;  Q^\nu_{\rho_k}(x_0)) \\ &=
		\int_{Q^\nu} (\widehat M_{k}^2\wedge \widehat\e_k f^2(\widehat v_k))
		\,|\nabla \widehat\uu_k |^2 + \frac{(1-\widehat v_k)^2}{4\widehat \eps_k} +\widehat \eps_k |\nabla \widehat v_k|^2 +
		\frac{1}{\widehat\delta_k} \dG^2(\widehat\uu_k , \On/\G) \dy
	\end{split}
	\]
	for 
	\[
	\widehat M_{k}:= \frac{M_{\e_k}}{\sqrt{\rho_k}}  \quad
	\widehat\eps_k:= \frac{\varepsilon_k}{\rho_k}, \quad  \widehat\delta_k:=\frac{\delta_{\e_k}}{\rho_k}, \quad \text{so that}\quad
	\lim_{k\to \infty} \frac{\widehat\eps_k}{\widehat\delta_k}= \lim_{k\to \infty} \frac{\eps_k}{\delta_k}=\lambda.
	\]
	Hence, if we consider the slices of these functions in the direction $\nu$, defined for   $z \in \Pi^\nu\cap  Q^\nu:=\{z\in Q^\nu \colon z\cdot \nu=0\}$ as
	\[
	(\widehat \uu_k)^\nu_z(t):= \widehat \uu_k(z+t\nu), \quad (\widehat v_k)^\nu_z(t):= \widehat v_k(z+t\nu), 
	\qquad t\in I=\left[-\frac 12, \frac 12 \right],
	\]
	then, by Proposition \ref{prop-slicing}, for $\calH^{N-1}$--a.e. $z\in \Pi^\nu\cap  Q^\nu$,
	\begin{equation}\label{f:slilb1}
		((\widehat \uu_k)^\nu_z, (\widehat v_k)^\nu_z)\in H^1(I;\Mno/\G\times [0,1]),
	\end{equation}
	and
	\begin{equation}\label{f:slilb2}
		\begin{split}
			(\widehat \uu_k)^\nu_z  & \to [R^-] \chi_{[-1/2,0)}+ [R^+] \chi_{[0,1/2]}
			\quad\text{in }L^1(I; \Mno/\G), \\
			(\widehat v_k)^\nu_z &  \to 1 \quad\text{in } L^1(I).
		\end{split}
	\end{equation}
	By Lemma \ref{l:clean}, for every $k\in\N$, and for every $z\in \Pi^\nu\cap  Q^\nu$ such that \eqref{f:slilb1} holds true,
	there exists $\beta_{k,z}\in H^1(I;\Mno)$ such that $[\beta_{k,z}]=(\widehat \uu_k)^\nu_z $ and 
	$|\beta_{k,z}'|=|((\widehat \uu_k)^\nu_z)'|$, and hence 
	\[
	\begin{split}
		\widehat F_k((\widehat \uu_k)^\nu_z, & (\widehat v_k)^\nu_z;I)
		\\  :=
		\int_{I} & (\widehat M_{k}^2\wedge \widehat\e_k f^2((\widehat v_k)^\nu_z))
		\,|((\widehat \uu_k)^\nu_z)' |^2 + \frac{(1-(\widehat v_k)^\nu_z)^2}{4\widehat \eps_k} +
		\widehat \eps_k |((\widehat v_k)^\nu_z)'|^2 +
		\frac{1}{\widehat\delta_k} \dG^2((\widehat \uu_k)^\nu_z , \On/\G) \dt  \\
		=
		\int_{I} & (\widehat M_{k}^2\wedge \widehat\e_k f^2((\widehat v_k)^\nu_z))
		\,|\beta_{k,z}' |^2 + \frac{(1-(\widehat v_k)^\nu_z)^2}{4\widehat \eps_k} +
		\widehat \eps_k |((\widehat v_k)^\nu_z)'|^2 +
		\frac{1}{\widehat\delta_k} \dist^2(\beta_{k,z} , \On) \dt.
	\end{split}
	\]
	Moreover, by \eqref{f:slilb2},
	\begin{equation*}
		 \beta_{k,z} \toG R^-\chi_{[-1/2,0)}+R^+\chi_{[0,1/2]},
		\qquad \text{for}\ \calH^{N-1}-\text{a.e.}\ z\in \Pi^\nu\cap Q^\nu.
	\end{equation*}
By Lemma~\ref{p:approxg} and Lemma \ref{l:lb1d},
we obtain
	\[
	\liminf_{k\to +\infty} \widehat F_k((\widehat \uu_k)^\nu_z,  (\widehat v_k)^\nu_z;I)
	\geq g_\lambda (R^-,R^+),
	\]
	for $\calH^{N-1}$--a.e. $z\in \Pi^\nu\cap Q^\nu$.
	
	The validity of \eqref{f:lob2}, and hence the conclusion, now follows by integrating on $\Pi^\nu$, and using Fubini's theorem, the estimates in Proposition \ref{prop-slicing}, and Fatou's lemma:
	\[
	\begin{split}
		\frac{\di\mu}{\di(\calH^{N-1}\LL J_u)}(x_0) &
		=  \lim_{k\to +\infty} \frac 1{\rho_k^{N-1}}\, \tF_{\e_k} (u_k,v_k;  Q^\nu_{\rho_k}(x_0)) 
		\geq   \lim_{k\to +\infty} 
		\int_{\Pi^\nu\cap Q^\nu} \widehat F_k((\widehat \uu_k)^\nu_z,  (\widehat v_k)^\nu_z;I)\,  \di z
		\\  &
		\geq \int_{\Pi^\nu\cap Q^\nu} \liminf_{k\to +\infty} \widehat F_k((\widehat \uu_k)^\nu_z,  (\widehat v_k)^\nu_z;I)\, \di z
		\geq g_\lambda (R^-,R^+)
		. \qedhere
	\end{split}
	\]
\end{proof}

\begin{remark}\label{rem:1404251131}
	We have proven above (LB)' in Theorem~\ref{thm:maineq}.  The analogous (LB) in Theorem~\ref{thm:main} can be  obtained either directly, following the same proof proposed in Theorem~\ref{th-lb}, or as a consequence of  Remark~\ref{rem:1104251642}, \eqref{0904251306}, \eqref{1304251245}, and Theorem~\ref{th-lb}.
\end{remark}

\subsection{Upper bound}\label{Subsec:limsup}

In this section we prove the upper bound for the functionals $\F_\e$ stated in Theorem \ref{thm:main}(UB).

\begin{theorem}\label{th-upperbound}
	For any $\beta \in \PC(\Omega; \On)$ there exists $(\beta_\varepsilon, v_\varepsilon)_\varepsilon \subset \SBVG(\Omega;\Mno )\times H^1(\Omega)$, 
with 
\[
	\beta_\varepsilon \to \beta \text{ in }\LG(\Omega; \Mno),\qquad v_\varepsilon \to 1 \text{ in }L^1(\Omega),
\]
and  such that
	\begin{equation}\label{limsup}
		\F^\lambda(\beta,1)\geq \limsup_{\varepsilon \to 0} \F_\varepsilon(\beta_\varepsilon, v_\varepsilon).
	\end{equation}
Moreover, for any sequence $\eta_\eps\to0$ such that
		\begin{equation}\label{eqassetaeps}
			\lim_{\eps\to0}  \frac{\eta_\eps }\eps=0\text{ and }
\lim_{\eps\to0}  \frac{\eta_\eps}{\delta_\eps}=0%<\infty
		\end{equation}
		one can choose the sequences $(\beta_\e,v_\e)$  such that $\eta_\eps\|\nabla \beta_\eps\|_{L^2(\Omega)}^2\to0$.
\end{theorem}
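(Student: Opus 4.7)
My plan is to prove (UB) via a density reduction to polyhedral jump sets followed by a face-by-face construction based on the one-dimensional cell profiles of Section~\ref{sec:surfen}. First, using the density result \cite{BCG14} together with the continuity of $g_\lambda$ (Proposition~\ref{prop:proprietag}\ref{prop:proprietag:C0}), for every $\eta>0$ I approximate $\beta$ by $\widetilde\beta\in\PC(\Omega;\On)$ whose jump set $J_{\widetilde\beta}=\bigcup_{j=1}^{L}F_j$ is a finite union of pieces of affine hyperplanes with $\F^\lambda(\widetilde\beta,1)\le \F^\lambda(\beta,1)+\eta$. A diagonal argument then allows me to assume $\beta$ itself is polyhedral.

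On each face $F_j$, with unit normal $\nu_j$ and traces $R_j^-,R_j^+\in\On$, I select $G_j\in\G$ attaining the minimum in $g_\lambda(R_j^-,R_j^+)=g_\lambda^*(R_j^-,G_j R_j^+)$, a rate $T_\eps\to\infty$ with $T_\eps\eps\to0$ (to be tuned below), and a one-dimensional pair $(\bar\beta_\eps^j,\bar v_\eps^j)\in \calU^{T_\eps}[R_j^-,G_jR_j^+]$: for $\lambda<\infty$ a near-optimizer of $F^{T_\eps}_{\eps/\delta_\eps}$ with energy below $g_\lambda(R_j^-,R_j^+)+o(1)$ (using $\eps/\delta_\eps\to\lambda$), and for $\lambda=\infty$ the sequence furnished by Lemma~\ref{lemmalambdainfty}\ref{lemmalambdainftyub} with $\lambda$ there equal to $\eps/\delta_\eps$. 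Writing $s_j(x)$ for the signed distance to the affine hyperplane containing $F_j$ and $U_j^\eps$ for the corresponding tubular neighborhood of thickness $2T_\eps\eps$ over $F_j$, I set
\[
\beta_\eps(x):=\bar\beta_\eps^j(s_j(x)/\eps),\qquad v_\eps(x):=\bar v_\eps^j(s_j(x)/\eps)\quad \text{in } U_j^\eps,
\]
and $\beta_\eps=\beta$, $v_\eps=1$ outside. The change of variables $t=s_j/\eps$ yields
\[
\F_\eps(\beta_\eps,v_\eps;U_j^\eps)\le \calH^{N-1}(F_j)\,F^{T_\eps}_{\eps/\delta_\eps}(\bar\beta_\eps^j,\bar v_\eps^j) + o(1),
\]
which converges to $\calH^{N-1}(F_j)g_\lambda(R_j^-,R_j^+)$ in all three regimes. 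The truncation by $M_\eps$ in $f_\eps$ is inactive because $(\bar\beta_\eps^j)'$ vanishes whenever $\bar v_\eps^j$ gets close to $1$, and where it does not $\sqrt\eps f(\bar v_\eps^j)$ is uniformly bounded and hence below $M_\eps$ for $\eps$ small.

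The hard part will be the treatment of the $(N-2)$-dimensional edges $E_{jk}\subset F_j\cap F_k$ where adjacent tubular neighborhoods $U_j^\eps,U_k^\eps$ intersect, together with the coordination of the different $G_j$. On a neighborhood $V^\eps$ of the $(N-2)$-skeleton of size $O(T_\eps\eps)$ I will paste the face constructions together by (a) setting $v_\eps\equiv0$ in a slightly smaller core of $V^\eps$ and interpolating by the optimal Modica--Mortola transition to $v_\eps=1$ on the outer boundary, and (b) defining $\beta_\eps$ there either as an $\On$-valued smooth connector between the neighboring values (so $\dist(\beta_\eps,\On)=0$), or as a piecewise-constant $\SBVG$ function whose jumps lie in $\G$ and therefore cost nothing in $\F_\eps$ (the jumps are supported where $v_\eps=0$, so $f_\eps^2(v_\eps)=0$). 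The extra cost is of order $T_\eps^2\eps\,\calH^{N-2}(E)+T_\eps\eps\,\calH^{N-2}(E)$, hence $o(1)$ provided $T_\eps^2\eps\to 0$; the condition $T_\eps\eps\to0$ also gives $\beta_\eps\to\beta$ in $\LG$ and $v_\eps\to1$ in $L^1$. Different choices of $G_j$ on adjacent faces cause no mismatch in the $\LG$ topology precisely because the equivalence class $[\beta_\eps]$ sees only orbits of $\G$.

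For the final assertion, a direct computation gives $\|\nabla\beta_\eps\|_{L^2(U_j^\eps)}^2\le (\calH^{N-1}(F_j)/\eps)\,\|(\bar\beta_\eps^j)'\|_{L^2(-T_\eps,T_\eps)}^2$. For $\lambda<\infty$ energy boundedness ensures this $L^2$-norm is $O(1)$, so $\eta_\eps\|\nabla\beta_\eps\|_{L^2(\Omega)}^2=O(\eta_\eps/\eps)\to0$ by \eqref{eqassetaeps}. For $\lambda=\infty$, Lemma~\ref{lemmalambdainfty}\ref{lemmalambdainftyub} provides $\|(\bar\beta_\eps^j)'\|_{L^2}^2\le C T_\eps\eps/\delta_\eps$, yielding $\eta_\eps\|\nabla\beta_\eps\|_{L^2(U_j^\eps)}^2\le C T_\eps\eta_\eps/\delta_\eps$, which tends to zero upon choosing $T_\eps\to\infty$ sufficiently slowly -- possible because $\eta_\eps/\delta_\eps\to0$. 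Thus the principal obstacle is the joint calibration of $T_\eps$: it must diverge quickly enough to absorb the cell-formula error, but slowly enough that both $T_\eps^2\eps\to0$ (edge contribution) and $T_\eps\eta_\eps/\delta_\eps\to0$ (weighted $H^1$-norm) hold simultaneously, all of which can be arranged by a single admissible rate.
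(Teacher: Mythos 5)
Your overall architecture (density via \cite{BCG14}, one–dimensional optimal profiles extruded over each face, a separate treatment of the $(N-2)$-skeleton, and the calibration of the rates $T_\eps$, $\eta_\eps$) matches the paper's proof. However, the step you yourself flag as ``the hard part'' --- pasting the face constructions together near the edges --- is exactly where your proposal has a genuine gap, and neither of the two mechanisms you offer works. The obstruction is not the energy cost of a jump where $v_\eps=0$ (indeed $f_\eps^2(v_\eps)=0$ there), but the \emph{admissibility constraint}: $\F_\eps$ is $+\infty$ unless $\beta_\eps\in\SBVG$, i.e.\ unless \emph{every} jump satisfies $\beta_\eps^+\in\G\,\beta_\eps^-$. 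On the lateral boundary of each tubular neighborhood $U_j^\eps$ the face profile $\bar\beta_\eps^j(s_j(x)/\eps)$ traces out all the intermediate values of the $1$d path, which are generic matrices in $\Mno$, generally far from $\On$ and not constant. An $\On$-valued smooth connector in $V^\eps$ (your option (a)) cannot match these traces continuously, so it creates a jump whose two traces are unrelated by $\G$; it also cannot exist at all when adjacent traces have opposite determinant (Lemma~\ref{lemmalambdainfty}\ref{lemmalambdainftyginfty}). A piecewise-constant connector with jumps in $\G$ (your option (b)) fails for the same reason: a constant cannot equal, or differ by a fixed $G\in\G$ from, the non-constant face trace except on a null set. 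This is precisely the point the paper emphasizes: the $\SBVG$ constraint is not stable under convex interpolation, so one cannot ``interpolate'' between the face profile and anything else.

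The paper's resolution exploits instead that $\SBVG$ \emph{is} stable under multiplication by scalar cut-offs: it sets $\beta_\eps=\psi^{\out}_\eps\widehat\beta_\eps+\psi^{\iin}_\eps\,\Id$, with $\psi^{\out}_\eps,\psi^{\iin}_\eps$ scalar profiles of width $\rho_\eps$ around $\dist(\cdot,\Gamma)=M\eps$ which force $\beta_\eps$ to pass through $0$ before coming back up to $\Id$ near the skeleton (where $v_\eps\equiv0$). This preserves the jump-in-$\G$ condition but introduces two costs absent from your accounting: a contribution $C\rho_\eps L\eps/\delta_\eps$ to the penalization term $\delta_\eps^{-1}\int\dist^2(\beta_\eps,\On)$ (requiring the extra condition $\rho_\eps\eps/\delta_\eps\to0$, nontrivial when $\delta_\eps\ll\eps$, i.e.\ $\lambda=\infty$), and a contribution $CL\eps/\rho_\eps$ to $\|\nabla\beta_\eps\|_{L^2}^2$ from $\nabla\psi^{\out/\iin}_\eps$, which enters the final calibration against $\eta_\eps$ and is the reason both conditions in \eqref{eqassetaeps} appear. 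Your rate analysis for the face contributions and for $\eta_\eps\|\nabla\beta_\eps\|_{L^2}^2$ restricted to $\bigcup_j U_j^\eps$ is otherwise consistent with the paper's, but without the cut-off construction the recovery sequence is not admissible and the theorem is not proved.
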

If $\lambda<\infty$ then it suffices to consider the first condition in \eqref{eqassetaeps}, if $\lambda=\infty$ it suffices to consider the second one.

\begin{proof}[Proof of Theorem~\ref{th-upperbound}]
	We first remark that the density result
	\cite[Lemma~4.1]{BCG14}, valid for sharp-interface energies depending on the jump,  holds also for our energies 
$\F^\lambda$ that depend on the two traces. Namely, with the very same proof of  \cite[Lemma~4.1]{BCG14} one obtains that for every
	given $\beta \in \SBV_0(\Omega; \On)$ there exist $\beta_k \in \SBV_0(\Omega_k; \On)$ such that
	\begin{equation*}
\Omega\subset\subset\Omega_k \qquad
\beta_k \to \beta \quad \text{ in }L^1(\Omega; \On), \qquad
\F^\lambda(\beta,1)=\lim_{k\to +\infty}\F^\lambda(\beta_k,1; \Omega_k),
	\end{equation*}
	\begin{equation*}
J_{\beta_k} \text{ is a finite union simplexes of dimension $N-1$.}
	\end{equation*}
The map $\beta_k$
 constructed in  \cite[Lemma~4.1]{BCG14}
takes values in $\On$ because  it coincides in any point of $\Omega$ with a suitable value of $\beta$.

	Therefore we can assume that
	$\Omega\subset\subset\Omega'$,
	$\beta\in \SBV_0(\Omega';\On)$ is piecewise constant on finitely many convex polyhedra,
	whose faces are unions of simplexes of dimension $N-1$, which are denoted by $F_l$.
	We need to find sequences $(\beta_\eps,v_\eps)\to (\beta,1)$ in $L^1(\Omega)$ such that
	\begin{equation*}
\limsup_{\eps\to0} \F_\eps(\beta_\eps,v_\eps)\le \F^\lambda(\beta,1;\Omega')
=\sum_l \mathcal H^{N-1}(F_l) g_\lambda(R^-_l, R^+_l),
	\end{equation*}
	where $R^\pm_l\in\On$ are the two traces of $\beta$ on the face $F_l$.
	We start with a decomposition of the domain, referring to Figure~\ref{fig-upper1} for an illustration.
 	We denote by $\nu_l$ the normal to $F_l$
 	and  by $\partial'F_l$
	the relative boundary of $F_l$, in the sense of the boundary of $F_l$ seen as a subset of a hyperplane.
	Possibly subdividing some of the facets, we can ensure that
	they have disjoint $(N-1)$-dimensional interior, in the sense that
	the sets $F_l\setminus\partial'F_l$ are disjoint.
	For $y\in F_l$ we write $d_l(y)$ for the distance to the boundary, $d_l(y):=\dist(y,\partial'F_l)$, and write briefly $\Gamma:=\bigcup_l \partial'F_l$.
	
	In what follows we denote by $(E)_r$ the $r$-neighborhood of a set $E\subset \R^N$.
	
	Since each $\partial' F_l$ is a finite union of $(N-2)$-dimensional polyhedra, we have
	(see, e.g., \cite[Sect. 2.13]{AmbFusPal00})
	\begin{equation}
		\label{eqminkowskilimit}
		\lim_{r\to0} \frac{|(\partial' F_l)_r|}{\pi r^2}=\calH^{N-2}(\partial' F_l)<\infty,
	\end{equation}
	so that there is $C$ with
	\begin{equation}\label{eqminkowskiboundary}
		|( \Gamma)_r|\le C r^2 \text{ for all } r\in (0,1).
	\end{equation}
Fix $L\ge 1$, we shall in the end take $\eps\to0$, $L\to\infty$, with $L\eps\to0$ and other conditions that will become clear later. We observe that,
since the $F_l$ are $(N-1)$-dimensional  simplexes with disjoint interior, there is $c_*\in[1,\infty)$ such that the sets
\begin{equation}\label{eqefEleps}
E^l_\eps:= \{ y+t\nu_l: y\in F_l,
	d_l(y)>c_* L\eps,\ t\in (-L\eps, L\eps)\}
\end{equation}
are pairwise disjoint if $L\eps$ is sufficiently small, see Figure~\ref{fig-upper1}.
In these regions we can use the optimal profiles; we shall than need to interpolate around $\Gamma$.
	\begin{figure}
		\centerline{\includegraphics[width=14cm]{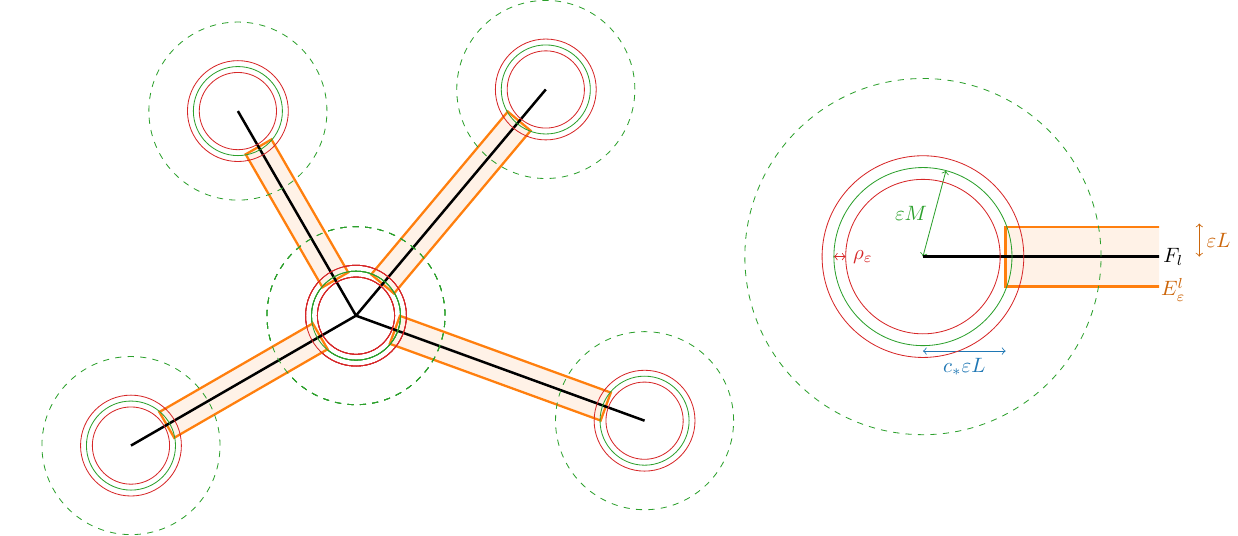}}
		\caption{Sketch of the construction in the upper bound. The black lines represent the facets $F_l$, and the orange subset is  $E^l_\eps$. 
		The dashed green 
		circles with radius $2M\eps$ around $\partial'F_l$ represent the set
			$\{v^0_\eps=0\}$, the two red ones 
			with radius $M\eps\pm \rho_\eps$ denote the area  where the two interpolations in 
			\eqref{eq-beta-finale} take place.
			The right panel shows a blow-up of a part of the set with some labels.}
		\label{fig-upper1}
	\end{figure}

We fix $M:=(1+c_*)L$, so that the lateral boundaries of each $E^l_\eps$ are contained in
$(\partial'F_l)_{M\eps}$, in the sense that
\begin{equation}
 \partial E^l_\eps \subset (F_l+L\eps\nu_l)\cup (F_l-L\eps\nu_l) \cup (\partial'F_l)_{M\eps}
\end{equation}
(the relevance of this condition will become clear after \eqref{eqdefvepsl} below)
and define $v_\eps^0\in H^1(\Omega;[0,1])$ by
	\begin{equation*}
	v^0_\eps(x):=1\wedge \frac1{M\eps} \dist(x,(\Gamma)_{2M\eps}).
	\end{equation*}
	We observe that $\|\nabla v^0_\eps\|_{L^\infty}\le 1/(M\eps)$
	and that $v^0_\eps=1$ outside
	$(\Gamma)_{3M\eps}$, so that
	\eqref{eqminkowskiboundary} implies
	\begin{equation}\label{eqlimsupATeps}
		\AT_\eps(v^0_\eps)
		\le |(\Gamma)_{3M\eps}| \left(\frac1\eps + \frac{\eps}{(M\eps)^2}\right)\leq C L^2\eps.
	\end{equation}
	For each $l$ we pick
	$G_l\in\G$ such that $g_\lambda(R_l^-,R_l^+)= g_\lambda^*(R_l^-, G_l^{-1}R_l^+)$
	(see 
\eqref{glambdacar} and
\eqref{glambdainfty}). 
If $\lambda<\infty$, by \eqref{glambda} we can
select
	$(\widetilde \beta_l,\widetilde v_l)\in H^1((-L,L);\Mno\times[0,1])$
	such that
	\begin{equation}\label{eqchoidetildevbeta}
\begin{split}
	\widetilde v_l(\pm L)=1, \hskip2mm
\widetilde\beta_l(-L)= R^-_l,\hskip2mm
G_l\widetilde\beta_l(L)=R^+_l,\hskip2mm
\|\widetilde\beta_l\|_{L^\infty}\le \sqrt d,\\
\|\widetilde \beta_l'\|_{L^2((-L,L))}^2 \le C(L)\,,\hskip1cm
\F_\lambda^L(\widetilde\beta_l,\widetilde v_l)\le
g_\lambda(R^+_l, R^-_l)+\omega(L),
\end{split}\end{equation}
with $\omega(L)\to0$ as $L\to\infty$, and $C(L)$ a positive constant depending only on $L$.
If instead $\lambda=\infty$, we use
Lemma~\ref{lemmalambdainfty}\ref{lemmalambdainftyub} and for any fixed $\eps>0$ and $L>1$ we take
\begin{equation}\label{eq-lambdaepsL}
	\lambda_{\eps}:=\frac{\eps}{\delta_\eps}
\end{equation}
and select
	$(\widetilde \beta_l,\widetilde v_l)\in H^1((-L,L);\Mno\times[0,1])$, depending on $\eps$ and $L$,
so that the first line of \eqref{eqchoidetildevbeta} still holds, and the second one is replaced by
\begin{equation}\label{eqchoidetildevbetainfty}
\|\widetilde \beta_l'\|_{L^2((-L,L))}^2 \le C \lambda_\eps L\,,\hskip1cm
 \F_{\lambda_{\eps}}^L(\widetilde\beta_l,\widetilde v_l)\le
g_\infty(R^+_l, R^-_l) +\omega({L}) + \omega({\lambda_{\eps}}),
\end{equation}
with $\omega(s)\to 0$ as $s\to +\infty$. 
	We define $v_\eps^l:E^l_\eps \to[0,1]$, using $y$ and $t$ as in \eqref{eqefEleps}, by
	\begin{equation}\label{eqdefvepsl}
		v_\eps^l(y+t\nu_l):=
		\widetilde v_l(t/\eps ),
	\end{equation}
	and then $v_\eps$ by
	\begin{equation}\label{eqdefveps}
		v_\eps(x):=\begin{cases}
			v_\eps^0(x)\wedge
			v_\eps^l(x), & \text{ if } x\in E^l_\eps,\\
			v_\eps^0(x), & \text{ if } x\in \Omega\setminus \bigcup_l E^l_\eps\,.
		\end{cases}
	\end{equation}
	We next show that no jumps are introduced on the boundaries of any $E_l^\eps$. Indeed,
	let $x=y+t\nu_l\in\partial E_l^\eps$.
	If $d_l(y)> c_* L\eps$ then
	$|t|= L\eps$, which implies $v_\eps^l(x)=\widetilde v_l(t/\eps)=1$. If instead
	$d_l(y)<c_*L\eps$ then
	$\dist(x,\partial' F_l)\le L(1+c_*)\eps=M\eps$, so that
	$v^0_\eps(x)=0$. In both cases, the minimum in the first line of \eqref{eqdefveps} equals $v^0_\eps(x)$, and we obtain $v_\eps\in H^1(\Omega;[0,1])$.
	The monotonicity condition
	$\AT_\eps(v'\wedge v'';A)\le
	\AT_\eps(v';A)+
	\AT_\eps(v'';A)$
	leads directly to
	\begin{equation}\label{eqdecompATub}
		\AT_\eps(v_\eps)
		\le \AT_\eps(v^0_\eps)
		+ \sum_l
		\AT_\eps(v_\eps^l;E^l_\eps).
	\end{equation}
	
	We now turn to the construction of $\beta_\eps$, which is more subtle.
	To make the strategy clear we deal with each difficulty separately, defining a sequence of intermediate functions.

	In order to rescale and account for the matrices $G_l$, we replace the functions $\widetilde\beta_l$ identified in \eqref{eqchoidetildevbeta} by
	\begin{equation}\label{eqconstrbeta1}
		 \beta^l_\eps(y+t\nu_l):=
		\begin{cases}
			G_l\widetilde \beta_l(\frac{t}{\eps}), &\text{ if } t\in[0,L\eps],\\
			\widetilde \beta_l(\frac{t}{\eps}), &\text{ if } t\in [-L\eps,0).
		\end{cases}
	\end{equation}
	We can  easily use this profile in the inner regions $E^l_\eps$. It automatically matches $R_l^\pm$ for $t =\pm  L\eps$, it can jump only in 0,
	and since $\widetilde \beta_l$ is continuous the traces differ by a factor $G_l$.
	The critical point is the interpolation at the lateral boundaries which are contained in $(\partial' F_l)_{M\eps}$.
	The condition on the jump required by $\SBVG$  is not linear (in the sense that the sum of two $\SBVG$ functions is not necessarily in $\SBVG$), therefore we cannot interpolate.
	However, $\SBVG$ is one-homogeneous, in the sense that
	if $f\in \SBVG(\Omega;\Mno)$ and $\psi\in C^1(\overline \Omega)$ then
	$\psi f\in \SBVG(\Omega;\Mno)$. On the other hand, the
	term $\delta_\eps^{-1}\dist^2(\cdot,\On)$ in the energy may be significantly increased by this operation, therefore we must make sure that $\{\psi\ne1\}$ is small.
		
	Since $v_\eps^0=0$ in $(\Gamma)_{2M\eps}$, we modify the functions $\beta^l_\eps$ by interpolating first with $0$ and then with the identity.
	We fix $\varphi\in C^\infty(\R;[0,1])$, such that $\varphi(t)=0$ for $t<0$ and $\varphi(t)=1$ for $t\geq1$, let $\rho_\eps\in(0,\eps]$ be a parameter to be chosen below, and set
	\begin{equation}
		\label{eq-cut-off}
		\psi^{\out}_\eps(x):=\varphi\left(\frac{\dist(x,\Gamma)-M\eps}{\rho_\eps}\right)\qquad \psi^\iin_\eps(x):=\varphi\left(\frac{M\eps-\dist(x,\Gamma)}{\rho_\eps}\right),
	\end{equation}
We then define
\begin{equation}\label{eqdefhatbetaeps}
		\widehat \beta_\eps(x):=
		\begin{cases} \beta^l_\eps(x), & \text{ if } x\in E^l_\eps,\\
			\beta(x),  & \text{ if } x\in \Omega\setminus \bigcup_lE^l_\eps,
		\end{cases}
	\end{equation}
	and combining it with the functions above 
	\begin{equation}
\label{eq-beta-finale}
\beta_\eps(x):=\psi^\out_\eps(x)	\widehat \beta_\eps(x) +\psi_\eps^\iin(x) \Id,
	\end{equation}
which may jump only on $\bigcup_l F_l$, with
$\beta_\eps^\pm=\psi^\out_\eps \widehat \beta_\eps^\pm$.

	We have already shown that $v_\eps\in H^1(\Omega;[0,1])$; from the energy estimate it will follow $v_\eps\to1$ in $L^2(\Omega)$. From the construction we also have $\beta_\eps\in \SBV(\Omega;\Mno)$ with $J_{\beta_\eps}\subset \bigcup_l F_l$
	and $\beta_\eps^+=G_l\beta_\eps^-$ on $F_l$; in order to prove $\beta_\eps\in \SBVG(\Omega;\Mno)$ it only remains to show that $\nabla\beta_\eps\in L^2(\Omega)$, this will be done in  \eqref{eqL2estimate} below. Finally, we notice that, since $\rho_\eps\le M\eps$, in
	$\left[\Omega\setminus \bigcup_l E^l_\eps \right]\setminus (\Gamma)_{M\eps+\rho_\eps}$ we have
	$\beta_\eps=\beta$, and therefore $\nabla \beta_\eps=0$.  As
%	$\beta=\beta_\eps$ away from $(\Gamma)_{2M\eps}\bigcup (\bigcup_l F_l)_{L\eps}$
%	and 
	$\|\beta_\eps\|_{L^\infty}\le\sqrt d$,  we immediately obtain $\beta_\eps\to\beta$ in $L^1(\Omega;\Mno)$.

	We now estimate the different terms of the energy.
%	We
%	first observe that, 
%	since $\rho_\eps\le M\eps$, in
%	$\left[\Omega\setminus \bigcup_l E^l_\eps \right]\setminus (\Gamma)_{M\eps+\rho_\eps}$ we have
%	$\beta_\eps=\beta$, and therefore $\nabla \beta_\eps=0$.
	In $(\Gamma)_{2M\eps}$ we have
	$v_\eps^0=0$ and therefore
	$f_\eps(v_\eps)=0$, so that the term
	$f^2_\eps(v_\eps)|\nabla\beta_\eps|^2$ vanishes outside $\{\psi^\out=1\}\cap\bigcup_l E_l^\eps$.
	Using
	\eqref{eqdecompATub} leads to
	\begin{equation}\label{equbdecdomfct}
		\F_\eps(\beta_\eps, v_\eps)
		\le \sum_l
		\F_\eps(\beta_\eps^l,v^l_\eps;E_l^\eps)+\AT_\eps(v_\eps^0)
		+\frac1{\delta_\eps} \int_{(\Gamma)_{M\eps+\rho_\eps}}
		\dist^2(\beta_\eps, \On)\dx.
	\end{equation}
	We next estimate these three terms,  starting with $\F_\eps(\beta_\eps^l,v^l_\eps;E^l_\eps)$, for a fixed $l$.

 We denote by $\widehat\lambda$ either $\lambda<+\infty$ or $\lambda_\eps$ in the case $\lambda=+\infty$.
Recalling that
	$|\beta^l_\eps|\le\sqrt d$ pointwise, we have
	\begin{equation}\label{eqestdeltalambda}
		\frac{1}{\delta_\eps} \dist^2(\beta_\eps^l(y+t\nu_l),\On)
		\le
		\frac{\widehat\lambda}{\eps} \dist^2(\beta_\eps^l(y+t\nu_l),\On)
		+ \frac{2d}{\eps}\left(\frac{\eps}{\delta_\eps}-\widehat\lambda\right)_+ .
	\end{equation}
Using \eqref{eqestdeltalambda}, Fubini's theorem, and changing variables to $s:=t/\eps$,  we obtain
	\begin{equation}\label{eqFubiniRl}
		\begin{split}
			\F_\eps(\beta_\eps^l,v^l_\eps;E^l_\eps)
			\le&\int_{\{y\in F_l: d_l(y)> c_*L\eps\}}
			\int_{-L}^{L}
\Bigl[			\frac{f_\eps^2(\widetilde v_l)}{\eps}
			|\widetilde \beta'_l|^2  + \widehat\lambda\,\dist^2(\widetilde \beta_l,\On) \\
			& \hskip3cm+
			2d\left(\frac{\eps}{\delta_\eps}-\widehat\lambda\right)_++
			\frac{(1-\widetilde v_l)^2}{4}
			+ |\widetilde v'_l|^2 \Bigr]\ds\dhN(y).
	\end{split}\end{equation}
	Estimating  $f_\eps^2(\widetilde v_l)\le {\eps}f^2(\widetilde v_l)$,
	and then using \eqref{eqchoidetildevbeta} and \eqref{eqchoidetildevbetainfty} (in the case $\lambda=+\infty$), leads to
	\begin{equation}\label{eqestFepsRepsl}
		\begin{split}
			\F_\eps(\beta_\eps^l,v^l_\eps;E^l_\eps)
			\le& \calH^{N-1}(F_l)
			\left[g_\lambda(R^-_l,R^+_l)+4dL \left(\frac{\eps}{\delta_\eps}-\widehat\lambda\right)_++\omega(L)+\omega(\lambda_\eps)\right],
	\end{split}\end{equation}
with the term $\omega(\lambda_\eps)$ only present if $\lambda=\infty$.
	For later reference, we note that a similar but simpler computation, using
	again \eqref{eqchoidetildevbeta} and \eqref{eqchoidetildevbetainfty},
leads to
\begin{equation}\label{eqestimatenablabetaepsRl}
\begin{split}
\|\nabla \beta_\eps\|_{L^2(E^l_\eps)}^2
\le \frac1{\eps}\calH^{N-1}(F_l) \|\widetilde \beta'_l\|_{L^2(\R)}^2
\le C(L) \frac{1+\widehat \lambda}
{\eps}
.
\end{split}\end{equation}
	The second term in \eqref{equbdecdomfct} was already treated in \eqref{eqlimsupATeps}.
	In the third term we use the bound $\|\beta_\eps\|_{L^\infty}\le\sqrt d$ 
	and the fact that $\Gamma$ is a finite union of $(N-2)$-dimensional polyhedra
	to obtain
	\begin{equation}\label{equbdecdomfct3}
\frac1{\delta_\eps} \int_{(\Gamma)_{M\eps+\rho_\eps}}
\dist^2(\beta_\eps, \On)\dx
\le \frac{C}{\delta_\eps}| (\Gamma)_{M\eps+\rho_\eps}\setminus (\Gamma)_{M\eps-\rho_\eps}|
\le C \frac{\rho_\eps L\eps }{\delta_\eps}.
	\end{equation}
	We are now ready to conclude the estimate of the energy. 
	
	We first consider the case $\lambda<+\infty$, taking the limit as $\eps\to 0$ first with $L$ fixed.
By \eqref{equbdecdomfct3}
		if we  choose $\rho_\eps$ such that
		\begin{equation}\label{eqassrhoeps}
				\lim_{\eps\to0} \frac{\rho_\eps \eps}{\delta_\eps}=0,
			\end{equation}
		then
		\begin{equation}\label{eqestdistcore}
				\limsup_{\eps\to0}\frac1{\delta_\eps} \int_{\Omega\setminus \bigcup_l E^l_\eps}
				\dist^2(\beta_\eps, \On)\dx=0.
			\end{equation}
	Starting from \eqref{equbdecdomfct}, taking the limit as $\eps\to0$, using
	\eqref{eqestFepsRepsl}, \eqref{eqlimsupATeps}, and
	\eqref{eqestdistcore} leads to
	\begin{equation*}
		\limsup_{\eps\to0}
		\F_\eps(\beta_\eps, v_\eps)
		\le \sum_l \calH^{N-1}(F_l)
		\left( g_\lambda(R^-_l,R^+_l)+ \omega(L)\right),
	\end{equation*}
	so that 
	\begin{equation*}
		\limsup_{L\to\infty}
		\limsup_{\eps\to0}
		\F_\eps(\beta_\eps, v_\eps)
		\le \sum_l \calH^{N-1}(F_l)
		g_\lambda(R^-_l,R^+_l)=\F^\lambda(\beta,1),
	\end{equation*}
	and the conclusion follows from a diagonal argument.
	
	It remains to address
	the $L^2$ estimate on $\nabla \beta_\eps$.
	We recall the definitions in \eqref{eq-beta-finale} and \eqref{eq-cut-off}, as well as
	the bound $|\widehat\beta_\eps|\le\sqrt d$, and write the pointwise estimate
	\begin{equation}\label{eqpwboundnablabetaeps}
		|\nabla\beta_\eps|\le \sqrt d (|\nabla \psi^\out_\eps|+|\nabla\psi^\iin_\eps|)
		+ |\psi^\out_\eps \nabla \widehat \beta_\eps|.
	\end{equation}
	We treat the two terms separately. First,
	\begin{equation}\label{eqnablabetal2a}
		\|\nabla \psi^\out_\eps\|_{L^2(\Omega)}^2+	\|\nabla \psi^\iin_\eps\|_{L^2(\Omega)}^2
		\le \frac{C}{\rho_\eps^2}
		|(\Gamma)_{M\eps+\rho_\eps}\setminus(\Gamma)_{M\eps-\rho_\eps}|\le C\frac{M\eps}{\rho_\eps},
	\end{equation}
	where we used that $\Gamma$ is a finite union of $N-2$ dimensional polyhedra.
	For the second one, from \eqref{eqdefhatbetaeps} and $\nabla\beta=0$ we see that we can restrict to $\bigcup_l E^l_\eps$, that were already treated.
	Recalling
	\eqref{eqpwboundnablabetaeps},
	\eqref{eqestimatenablabetaepsRl}, and
	\eqref{eqnablabetal2a},
	\begin{equation}\label{eqL2estimate}
		\|\nabla\beta_\eps\|_{L^2(\Omega)}^2\le
		C\left( \frac{L\eps}{\rho_\eps}+C(L)\frac{1+\widehat\lambda}{\eps}\right).
	\end{equation}
	
	It remains to choose $\rho_\eps$.
% Assume first that $\lambda<\infty$.
	We recall \eqref{eqassrhoeps} and see that we need to choose $\rho_\eps\to0$, $0<\rho_\eps\le\eps$, such that
	\begin{equation}\label{eqconditionsrhoeps}
		\lim_{\eps\to0}\frac{\rho_\eps \eps}{\delta_\eps}=0
		\text{ and }
		\lim_{\eps\to0}\eta_\eps \left[ \frac1\eps+\frac{ \eps}{\rho_\eps}\right]=0.
	\end{equation}
	We fix $\rho_\eps:=\eps$ and see that it obeys all requirements provided that
	\begin{equation}\label{eq-choiceeta}
		\lim_{\eps\to0}  \frac{\eta_\eps }\eps=0,
\end{equation}
as in the assumptions.

With this choice of $\eta_\eps$ we then obtain that
	\begin{equation}\label{eqL2estimateeta}
\limsup_{L\to +\infty}\limsup_{\eps\to 0}\eta_\eps	\|\nabla\beta_\eps\|_{L^2(\Omega)}^2=0
\end{equation}
so that with a diagonal argument we conclude the proof in the case $\lambda<+\infty$.

We now turn to the case $\lambda=+\infty$.  We fix a sequence $L_\eps\to +\infty$ chosen below, that will depend on the sequence $\lambda_\eps$ defined in \eqref{eq-lambdaepsL}.

Now starting from  \eqref{equbdecdomfct},   collecting
\eqref{eqestFepsRepsl}, \eqref{eqlimsupATeps}, \eqref{equbdecdomfct3} leads to 
\begin{equation}\label{lastupperlambdainfty}
	\F_\eps(\beta_\eps, v_\eps)
	\le \sum_l  \calH^{N-1}(F_l)
 	\left[g_\infty(R^-_l,R^+_l)+\omega_\eps\right] 
 	 	+C L_\eps^2\eps + C\, \frac{\rho_\eps L_\eps\eps}{\delta_\eps}
%  	 	+ C \frac{\rho_\eps^2}{\delta_\eps}
%  	 	+ CL_\eps\eps
% 	\left[g_\infty(R^-_l,R^+_l)+\omega_\eps\right],
\end{equation}
where $\omega_\eps=\omega(L_\eps)+\omega(\lambda_\eps)$.
% (see \eqref{eqchoidetildevbeta}). 
Then choosing $L_\eps\to\infty$ so that $L_\eps^2\eps\to 0$ as $\eps\to 0$, and recalling \eqref{eqassrhoeps}, we conclude that
$$
\limsup_{\eps\to0}
	\F_\eps(\beta_\eps, v_\eps)\leq \sum_l \calH^{N-1}(F_l)
	g_\infty(R^-_l,R^+_l)=\F^\infty(\beta,1).
$$
In order to obtain the estimate of the gradients, in view of \eqref{eqL2estimate} it is enough to replace the conditions \eqref{eqconditionsrhoeps} 
% the second condition of \eqref{eq-choiceeta} 
with
	\begin{equation}\label{eqconditionsrhoepsinfty}
	\lim_{\eps\to0}\frac{\rho_\eps L_\eps \eps}{\delta_\eps}=0
	\text{ and }
	\lim_{\eps\to0}\eta_\eps \left[ C(L_\eps)\frac{\lambda_\eps}\eps+\frac{ L_\eps\eps}{\rho_\eps}\right]=0.
\end{equation}
We fix $\rho_\eps:=\delta_\eps\wedge\eps$, so that the first condition in \eqref{eqconditionsrhoepsinfty} holds. We can choose $L_\eps\to\infty$ such that the second one holds if and only if 
\begin{equation}
 \lim_{\eps\to0}
\eta_\eps \frac{\lambda_\eps}{\eps}
= \lim_{\eps\to0} \frac{\eta_\eps}{\delta_\eps}=0,\text{ and }
 \lim_{\eps\to0} \eta_\eps\frac{\eps}{\rho_\eps}=0.
 \end{equation}

%This gives the thesis once we fix $\rho_\eps:=\delta_\eps\wedge\eps$ \CCC and choose $L_\varepsilon$ satisfying \eqref{eqconditionsrhoepsinfty} (with also $L_\varepsilon^2 \varepsilon\to 0$, see above). \EEE
\end{proof}

\begin{remark}\label{rem:1404251131}
The upper bound (UB)' in Theorem~\ref{thm:maineq} follows directly from Theorem~\ref{th-upperbound} and Theorem~\ref{theoliftingmanifold}. Namely, given $u \in \SBV_0(\Omega;\On/\G)$,
by Remark~\ref{rem:1104251642}, \eqref{0904251306}, and \eqref{1304251245}, a recovery sequence for $\widetilde\F^\lambda(u,1)$ is given by
$([\beta_\e],v_\e)_\e$, for $(\beta_\e,v_e)_\e$ recovery sequence for $\F^\lambda(\beta,1)$, 
$\beta \in \SBV_0(\Omega;\On)$ being the lifting of $u$ given in Theorem~\ref{theoliftingmanifold} 
(see also Remark~\ref{rem:0904251748}).

Therefore the proofs of Theorems~\ref{thm:main} and \ref{thm:maineq} are completed. 
\end{remark}

\section*{Acknowledgements}
We thank Luigi Ambrosio for an interesting comment on a preliminary version of this work. S.C. gratefully
thanks Dipartimento di Matematica Guido Castelnuovo, Sapienza Università di Roma, where a large part of this work was carried out, for the warm hospitality. 
This work was partially funded by the Deutsche Forschungsgemeinschaft (DFG,
German Research Foundation), project CRC 1720 -- 539309657.
V.C., A.G., A.M.~acknowledge the support of the MUR - PRIN project 2022J4FYNJ CUP B53D23009320006 ``Variational methods for stationary and evolution problems with singularities and interfaces'', PNRR Italia Domani, funded by the European Union via the program NextGenerationEU. V.C.\ ~acknowledges also the support of the SEED-PNR Project CUP B87G22001200001.  V.C., A.G., A.M. are members of the Gruppo Nazionale per l'Analisi Matematica, la Probabilità e le loro Applicazioni (INdAM-GNAMPA).

\end{document}